\pgfplotsset{compat=newest}
\newcounter{example}[section]
\newtheorem{theorem}{Theorem}[section]
\newtheorem{lemma}[theorem]{Lemma}
\newtheorem{prop}[theorem]{Proposition}
\theoremstyle{definition}
\newtheorem{defn}{Definition}[section]
\newtheorem{remark}{Remark}[section]
\newtheorem{exmp*}{Example}
\theoremstyle{remark}
\newcommand{\inv}{^{\raise box{.2ex}{$\scriptscriptstyle-1$}}}
\newcommand{\norm}[2]{\|#2\|_{#1}}
\newcommand{\vvvert}{\vert\kern-0.25ex\vert\kern-0.25ex\vert}
\newcommand{\uvec}[1]{\underline{\mathbf{#1}}}
\newcommand{\bvec}[1]{\boldsymbol{#1}}
\newcommand{\bo}[1]{\mathbf{#1}}
\newcommand{\bx}{\bvec{x}}
\newcommand{\bos}[1]{\boldsymbol{#1}}
\newcommand{\up}[1]{\Upsilon^2_{#1}}
\newcommand{\dgr}[1]{\mathbb G^1_{#1}}
\newcommand{\mI}[1]{\mathbb I_h\bo{#1}}
\newcommand{\aI}[1]{\mathbb I^{a}_h\bo{#1}}
\def\ext{\text{ext}}
\def\inte{\text{int}}
\def\R{\mathbb{R}}
\def\P{\mathbb{P}}
\def\G{\mathbb{G}}
\def\I{\mathbb{I}}
\def\bsig{\boldsymbol{\sigma}}
\def\be{\boldsymbol{\epsilon}}
\newcommand{\Id}{\mathrm{I}_3}
\def\cC{\mathcal{C}}
\def\cE{\mathcal{E}}
\def\cF{\mathcal{F}}
\def\cK{\mathcal{K}}
\def\cM{\mathcal{M}}
\def\cN{\mathcal{N}}
\def\cT{\mathcal{T}}
\def\cV{\mathcal{V}}
\def\cW{\mathcal{W}}
\def\fr{\mathfrak{r}}
\def\fC{\mathfrak{C}}
\def\blambda{\boldsymbol{\lambda}}
\def\bmu{\boldsymbol{\mu}}
\def\Up{\Upsilon}
\def\bv{{\bf{v}}}
\def\dls{\boldsymbol{X}_{{\bf grad},K}}
\def\dgs{\boldsymbol{X}_{{\bf grad},h}}
\def\ddgs{\boldsymbol{X}_{{\bf grad},h,0}}
\def\sjump#1{[\hskip -1.5pt[#1]\hskip -1.5pt]}
\def\[{\partial}
\def\O{\Omega}
\def\tr{{\rm {tr}}}
\def\bc{\bos{\chi}}
\newcommand{\bspace}{\O\setminus\overline{\Gamma}}
\newcommand{\Ks}{{\mathcal{K}s}}
\newcommand{\Ke}{{\mathcal{K}e}}
\newcommand{\Kz}{{\mathcal{K}z}}
\newcommand{\Lz}{{\mathcal{L}z}}
\def\Ksig{{K\!\sigma}}
\def\Lsig{{L\!\sigma}}
\def\Lsig{{L\!\sigma}}
\newcommand{\DIV}{\operatorname{\mathrm{div}}}
\newcommand{\email}[1]{\href{mailto:#1}{#1}}
\begin{document}

\allowdisplaybreaks[4]
\numberwithin{figure}{section}
\numberwithin{table}{section}
 \numberwithin{equation}{section}

\title{A higher order polyhedral method for contact mechanics with Tresca friction}

\author[1,2]{J\'er\^ome Droniou}
\author[1]{Raman Kumar}
\author[3]{Roland Masson}
\author[1]{Ritesh Singla}
\affil[1]{IMAG, Univ. Montpellier, CNRS, Montpellier, France, \email{jerome.droniou@cnrs.fr}, \email{raman.kumar@umontpellier.fr}, \email{iritesh281@gmail.com}}
\affil[2]{School of Mathematics, Monash University, Melbourne, Australia}
\affil[3]{Universit\'e C\^ote d’Azur, Inria, CNRS, Laboratoire J.A. Dieudonn\'e, Team Galets, Nice, France, \email{roland.masson@univ-cotedazur.fr}}

\maketitle

 \date{}
 \begin{abstract}
 In this work, we design and analyze a Discrete de Rham (DDR) scheme for a contact mechanics problem involving fractures along which a model of Tresca friction is considered. Our approach is based on a mixed formulation involving a displacement field and a Lagrange multiplier, enforcing the contact conditions, representing tractions at fractures. The approximation space for the displacement is made of vectors values attached to each vertex, edge, face, and element, while the Lagrange multiplier space is approximated by piecewise constant vectors on each fracture face. The displacement degrees of freedom allow reconstruct piecewise quadratic approximations of this field. We prove a discrete Korn inequality that account for the fractures, as well as an inf-sup condition (in a non-standard $H^{-1/2}$-norm) between the discrete Lagrange multiplier space and the discrete displacement space. We provide an in-depth error analysis of the scheme and show that, contrary to usual low-order nodal-based schemes, our method is robust in the quasi-incompressible limit for the primal variable~(displacement). An extensive set of numerical experiments confirms the theoretical analysis and demonstrate the practical accuracy and robustness of the scheme.
  \medskip\\
  \textbf{Key words.} Contact-mechanics, fracture networks, Discrete de Rham complex, discrete inf-sup condition, discrete Korn inequality, error estimates, polyhedral method.
  \medskip\\
  \textbf{MSC2020.} 65N30, 65N15
 \end{abstract}

\section{Introduction}
The study of Contact mechanics in fractured porous media plays a crucial role in subsurface applications where understanding the complex interplay between mechanical deformation, fracture behavior, and fluid flow is essential for accurate prediction and risk assessment. A prime example of this is $\text{CO}_2$ sequestration, where the action of injecting the gas can create a buildup of pressure that may eventually cause the reactivation of faults by changing pore pressure and normal stress on fracture surfaces, thus posing risks of deformation of rocks, $\text{CO}_2$ leakage \cite{abbassifracture} and seismic activity \cite{GH19}. Fractures are classically represented as a network of planar surfaces connected to the surrounding matrix domain, resulting in mixed-dimensional models that have been extensively studied in poromechanics \cite{abbassifracture, contact-norvegiens, GDM-poromeca-cont, GDM-poromeca-disc, BDMP:21, tchelepi-castelletto-2020, GH19, GKT16, NEJATI2016123}. The movement along fractures is governed by nonlinear frictional laws arising from inequality constraints imposed by non-penetration conditions and the non-smooth nature of friction laws. The Tresca friction model, which prescribes a yield threshold for tangential traction independent of normal traction, provides a mathematically tractable framework while capturing essential friction behavior \cite{chouly2023}. More generally, the Coulomb friction law, where the tangential traction threshold depends on the normal stress, better represents physical friction but introduces additional nonlinearity \cite{CHLR2020}. 

Capturing the geometric complexity of fracture networks with their corners and intersections typically requires a robust and accurate discretization method. Traditional finite element methods (FEM), while well-established for regular meshes, often struggle with non-simplicial element shapes that naturally arise in geological applications \cite{brennerscott, hho-book}. This challenge has motivated the development of polytopal discretization methods offering greater geometric flexibility while maintaining accuracy and stability. Several families of polytopal methods have emerged in computational mechanics over the past two decades, including Hybrid High Order (HHO) \cite{dipietro-ern,hho-book}, Discontinuous Galerkin \cite{hansbo-larson}, Virtual Element Methods (VEM) \cite{beirao-brezzi-marini}, and the Discrete De Rham (DDR) method \cite{DDR19,pjddr}. Some of these have been extended to account for contact mechanics, such as: VEM on non-conforming meshes \cite{Wriggers-2022}, HHO combined with Nitsche's contact formulation \cite{CEP20}, VEM for Nitsche frictionless contact \cite{rolandlaazirifrictionless} and Coulomb frictional contact mechanics \cite{LM25}, and VEM with bubble stabilisation \cite{jhr}. So far, DDR has not been considered for contact mechanics, but it is a natural candidate for mechanics problems since it provides a nodal-based discrete version of the $H^1$ space; it moreover already proved efficient in approximating solid and fluid mechanics models, including Navier-Stokes equations \cite{Di-Pietro.Droniou.ea:24}, Maxwell equations on manifolds \cite{DRONIOU2025113886}, Yang-Mills equations \cite{droniou-yang-mills}, as well as the Reissner-Mindlin and Kirchoff-Love plate problems \cite{pietro-reissner,ddr_kl}.

The discretization of contact mechanics has been an active area of research, driven by the challenges of variational inequalities, the nonlinearity of the frictional constraints, and the need to ensure stability under unilateral conditions. For mixed-dimensional models with fractures represented as co-dimension one manifolds (e.g., 2D surfaces in a 3D medium), additional complexities arise due to intersections, corners, and tips. The mixed-dimensional framework for flow in fractured porous media was pioneered in \cite{MJE05,MAE02}, in which discrete fracture-matrix (DFM) models are developed to represent fractures as lower-dimensional manifolds embedded in the porous matrix. In mixed formulation with Lagrange multipliers, surface tractions are introduced as independent unknowns, with the Lagrange multiplier representing the traction at the contact interface, and enforcing dual cone constraint \cite{brezzifortin,alart}. A fundamental requirement for well-posedness is the satisfaction of a discrete inf-sup condition between displacement and multiplier spaces.
\cite{BR2003} introduced the $\mathbb{P}_1-$bubble FEM discretization for the Signorini problem, enriching the displacement space with bubble functions to satisfy the inf-sup condition. In the polytopal framework, the recent work of \cite{jhr} extended the bubble-stabilized mixed formulation to the first-order VEM framework for contact mechanics with Tresca friction.
Furthermore, \cite{wohlmuthquadraticcontact} proposed an abstract framework for the $~ priori$ error analysis of 3D frictionless contact problems using quadratic finite elements. They established an $\mathcal{O}(h^{\frac12+\fr})$ convergence for $\frac12 <\fr<1$, when the solution belongs to $H^{\frac32+\fr}$, providing $H^1$ and $H^{-1/2}$-norm bounds for displacement and traction errors, respectively. The analysis assumes constant normal vectors and no friction, together with an additional regularity condition~\cite[Assumption~4]{wohlmuthquadraticcontact} on the contact zone (where the transition between contact and non-contact occurs) to ensure smooth behavior near the interface. It is noted that the estimates are not robust for nearly incompressible materials~(that is, when the Poisson ratio tends to $0.5$).

In this work, we design and analyze a second-order DDR scheme for contact mechanics with Tresca friction on fracture networks modeled with co-dimension 1 interfaces. In the discrete formulation, the displacement field is approximated using vector values attached to vertices, edges, faces, and elements, and the contact constraints are enforced through face-wise constant Lagrange multipliers on the fracture network. The chosen degrees of freedom for the displacement allow us to reconstruct piecewise quadratic displacements on the edges, faces, and elements.
A key analytical result of this study is the discrete Korn inequality adapted to the DDR-type spaces, where our proof draws inspiration from \cite{jhr} but differs substantially due to the presence of quadratic face displacement reconstruction. Furthermore, we establish a discrete inf-sup condition based on a non-standard $H^{-1/2}$-type norm on the fracture network, ensuring stability of the mixed formulation. We provide a comprehensive error analysis for solutions with $H^{\frac32+\fr}$ regularity, which yields an $\mathcal O(h^{\frac{1}{2}+\fr})$, where $h$ is the mesh size, order convergence in the energy norm (for $0< \fr<1$ when the Tresca threshold vanishes and $0< \fr\le 1/2$ otherwise -- although the numerical results suggests that $0< \fr<1$ remains valid even when the Tresca threshold does not vanish). Moreover, the error estimates are robust for the primal variable (displacement) in the incompressible limit, when the Poisson ratio tends to $\frac{1}{2}$. This is made possible by the use of a bespoke interpolator (designed by correcting the cell values of the standard DDR interpolator) that enjoys a commutation (Fortin) property with the continuous and discrete divergences. While the global $H^{\frac32+\fr}$ regularity assumption is standard for establishing optimal convergence rates in contact problems, realistic fracture network configurations may exhibit reduced regularity near fracture tips and intersections \cite{Dauge1988}. This could be dealt with by incorporating locally weighted regularity estimates that represent an interesting direction for future theoretical developments out of the scope of the present work.
We also note that, although the analysis is presented for a Tresca-type friction model, the extension of the proposed discretisation to the more involved case of Coulomb friction can be carried out without difficulty, following the same approach as for the lowest-order scheme in \cite{droniou2023bubble}. However, very few error estimates have been derived in the Coulomb case with non-penetration, all based on a mesh-dependent smallness assumption on the friction coefficient \cite{Hild2012}. A more tractable extension to be investigated would consist in considering a normal compliance model with Coulomb friction, following the approach proposed in \cite{Wang21}.

The rest of this paper is organized as follows. Section \ref{model} introduces the governing equations for contact with Tresca friction, presenting the strong and weak formulations. Section \ref{scheme} details the DDR discretization, including mesh notation (Section \ref{subsec mesh}), the construction of discrete spaces with degrees of freedom (Section \ref{subsec_space}), displacement and gradient reconstruction (Section \ref{subsec reconstruct}), and the discrete scheme (Section \ref{subsec discrete_formulation}) including its well posedness. Section \ref{main_results} states the main results, starting from an abstract error estimate (Theorem \ref{thm:abstract.error.estimate}) that does not assume any regularity on the solution, before presenting an error estimate (Theorem \ref{thm:error.estimate}) providing rates of convergence depending on the regularity of the solution; the proofs are presented in Section \ref{sec_proofs}. Finally, the theoretical results are validated by extensive numerical tests presented in Section \ref{sec numerical}.


\section{Model}\label{model}
We consider a discrete fracture-matrix model on the polyhedral domain $\O\subset\R^3$ including a fracture network $\Gamma$ with co-dimension $1$ defined as the union of flat fractures $\Gamma_i$:
\begin{equation*}
\overline{\Gamma}=\bigcup_{i\in\bo{I}}\overline{\Gamma}_i.
\end{equation*}
We assume that $\Gamma$ does not disconnect any part of $\Omega$ from $\partial\Omega$, that is, the boundary of each connected component of $\Omega\backslash \Gamma$ contains a portion of $\partial\Omega$ of non-zero $(d-1)$-dimensional measure. Each fracture $\Gamma_i$ (for $i\in\bo{I}$) is a polygonal simply connected open subdomain of a plane of $\R^3$. Without restriction of generality, it is assumed that fractures may only intersect at their boundaries. The two sides of a given fracture of $\Gamma$ are denoted by $\pm$ in the matrix domain $\O\setminus\overline{\Gamma}$, and respectively called positive and negative sides. The two unit normal vectors $\bo{n}^\pm$, oriented outward from the sides $\pm$, satisfy $\bo{n}^++\bo{n}^-=\bo{0}.$ For simplicity, given homogeneous Dirichlet boundary conditions, the space
\begin{equation*}
\bo{U}_0\coloneqq H^1_0(\O\setminus\overline{\Gamma})^3
\end{equation*}
of the displacements (which may be discontinuous across fractures) is endowed with the norm $\lVert\bo{v}\rVert_{H^1(\O\setminus\overline{\Gamma})}=\lVert \nabla\bo{v}\rVert_{L^2(\O\setminus\overline{\Gamma})}$. 
The oriented jump operator on $\Gamma$ for functions $\bo{u}\in\bo{U}_0$ is defined by
\begin{equation*}
\sjump{\bo{u}}\coloneqq\gamma^+\bo{u}-\gamma^-\bo{u},
\end{equation*}
where $\gamma^\pm$ are the trace operators on each side of $\Gamma$. Its normal and tangential components are denoted respectively by $\sjump{\bo{u}}_{\bo{n}}\coloneqq\sjump{\bo{u}}\cdot\bo{n}^+$ and $\sjump{\bo{u}}_\tau\coloneqq\sjump{\bo{u}}-\sjump{\bo{u}}_{\bo{n}}\bo{n}^+$. The normal trace operators on $H_{\DIV}(\bspace)$ are denoted by $\gamma_{\bo{n}}^\pm$. The symmetric gradient operator $\bos{\epsilon}$ is defined on $\bo{U}_0$ by $\bos{\epsilon}(\bo{v})\coloneqq\frac{1}{2}(\nabla \bo{v}+(\nabla \bo{v})^t)$.

The model we consider accounts for the mechanical equilibrium equation with a linear isotropic elastic constitutive law and a Tresca frictional contact model at matrix-fracture interfaces. In its strong form, it is written
\begin{equation}\label{eq:strong_form}
\begin{cases}
-\DIV\bsig(\bo{u})=\bo{f} & \text{on}~\bspace,\\
\bsig(\bo{u})=\frac{E}{1+\nu} \left(\be( \bo{u})+\frac{\nu}{1-2\nu}(\tr \be(\bo{u}))\Id\right)& \text{on}~\bspace,\\
\gamma_{\bo{n}}^+\bos{\sigma}(\bo{u}) + \gamma_{\bo{n}}^-\bos{\sigma}(\bo{u})=\bo{0}& \text{on}~\Gamma,\\
T_{\bo{n}}(\bo{u})\leq 0,~\sjump{\bo{u}}_{\bo{n}}\leq 0,~\sjump{\bo{u}}_{\bo{n}}T_{\bo{n}}(\bo{u})=0&\text{on}~\Gamma,\\
\lvert \bo{T}_{\tau}(\bo{u})\rvert\leq g&\text{on }\Gamma,\\
\bo{T}_{\tau}(\bo{u})\cdot\sjump{\bo{u}}_\tau+g\lvert \sjump{\bo{u}}_\tau\rvert=0&\text{on }\Gamma,
\end{cases}
\end{equation}
with a Tresca threshold $g\geq 0$, Poisson ratio $\nu$, and Young’s modulus $E$ satisfying $\frac{E}{1+\nu}\in [\mu_{1},\mu_{2}]$ with $0 < \mu_{1} \leq\mu_{2}$, the $3\times 3$ identity matrix $\Id$, and normal and tangential surface tractions $T_{\bo{n}}(\bo{u})=\gamma_{\bo{n}}^+\bos{\sigma}(\bo{u})\cdot\bo{n}^+$ and $\bo{T}_\tau(\bo{u})=\gamma_{\bo{n}}^+\bos{\sigma}(\bo{u})-T_{\bo{n}}(\bo{u})\bo{n}^+$. We assume henceforth that the external force term $\bo{f}$ belongs to $L^2(\O)^3$ and that $g\in L^2(\Gamma)$ is nonnegative.

The weak formulation of the mechanical model with Tresca frictional-contact \eqref{eq:strong_form} is written in mixed form using a vector-valued Lagrange multiplier $\blambda\colon\Gamma\rightarrow \R^3$ at matrix-fracture interfaces. Define the displacement jump space by
\begin{equation*}
\bo{W}_j(\Gamma)\coloneqq\{\sjump{\bo{v}}\colon \bo{v}\in\bo{U}_0\}
\end{equation*}
and denote by $\bo{W}'_j(\Gamma)$ its dual space; the duality pairing between these two spaces is written $\langle\cdot,\cdot\rangle_{\Gamma}$. 
We note that $L^2(\Gamma)^3\subset \bo{W}'_j(\Gamma)$ and that $\langle \bos{\mu},\bo{v}\rangle_{\Gamma}\coloneqq\int_{\Gamma}\bos{\mu}\cdot\bo{v}$ whenever $\bos{\mu}\in L^2(\O)^3$. The dual cone is then defined by 
\begin{equation*}
\cC_f\coloneqq \left\{\bos{\mu}\in \bo{W}'_j(\Gamma)\colon \langle \bos{\mu},\bo{v}\rangle_{\Gamma}\leq \int_\Gamma g|\bo{v}_\tau|\text{ for all }\bo{v}\in \bo{W}_j(\Gamma)\text{ with }\bo{v}\cdot\bo{n}^+\leq 0\right\}.
\end{equation*}
The weak mixed-variational formulation of \eqref{eq:strong_form} reads: find $\bo{u}\in\bo{U}_0$ and $\blambda\in\cC_f$ such that
\begin{subequations}\label{eq:model}
\begin{alignat}{2}
\int_{\O}\bos{\sigma}(\bo{u})\colon\bos{\epsilon}(\bo{v})+\langle\blambda,\sjump{\bo{v}}\rangle_\Gamma&=\int_\O \bo{f}\cdot\bo{v}&&\qquad\forall \bo{v}\in\bo{U}_0,
\nonumber\\
\langle \bos{\mu}-\blambda,\sjump{\bo{u}}\rangle_{\Gamma}&\leq 0&&\qquad\forall \bos{\mu}\in\cC_f.
\label{eq:model.contact}
\end{alignat}
\end{subequations}
We can see from this formulation that $\blambda=-\gamma_{\bo{n}}^+\bos{\sigma}(\bo{u})=\gamma_{\bo{n}}^-\bos{\sigma}(\bo{u})$.
\section{Scheme}\label{scheme}
\subsection{Mesh}\label{subsec mesh}

We take a polyhedral mesh of the domain $\O$ that is compliant with the fracture network $\Gamma$, and we assume that the Tresca threshold $g$ is piecewise constant on the trace $\cF_\Gamma$ of the mesh on $\Gamma$. By compliance,
$$
  \overline{\Gamma}=\bigcup\limits_{\sigma\in\cF_\Gamma}\overline{\sigma}.
$$
The set of cells $K$, the set of faces $\sigma$, the set of edges $e$ and the set of vertices $s$ are denoted respectively by $\cM$, $\cF$, $\cE$ and $\cV$. For $z\in\cM\cup\cF\cup\cE$, we denote by $h_z$ the diameter of $z$  and by $|z|$ its measure (in its own dimension). We also set $h=\max\limits_{K\in\mathcal M}h_K$. 
For $\mathcal X\in\{\cM,\cF,\cE,\cV\}$ and $z\in\cM\cup\cF\cup\cE\cup\cV$, $\mathcal X_z$ denotes the set of entities in $\mathcal X$ that have $z$ in their boundary or that belong to the boundary of $z$. So, for example, the set of faces in the boundary of some $K\in\cM$ is written $\cF_K$, and the set of cells having a vertex $s\in\cV$ in their boundary is $\cM_s$. If $\mathcal X\in\{\cF,\cE,\cV\}$ we denote by $\mathcal X^\inte$ the set of elements of $\mathcal X$ that lie in $\Omega$ and by $\mathcal X^\ext$ those that lie on $\partial\Omega$.

In the following, we write $a\lesssim b$ as shorthand for $a\le Cb$ for some constant $C$ that may depend on $\Omega$, the mesh regularity parameter and the Young modulus $E$, but does not depend on the Poisson ratio $\nu$ or the mesh size $h$. In particular, the constant $C$ does not blow up as $\nu\to \frac12$ (quasi-incompressible regime).

Next, we introduce the broken Sobolev space $H^1(\cM)$ as 
$$
  H^1(\cM)\coloneqq \{v\in L^2(\O):v|_K\in H^1(K)\quad\forall K\in\cM\}.
$$

For any $\sigma\in \cF^\inte$, we denote $\sigma=K|L$ where $K$ and $L$ are the two cells that share the face $\sigma$. For a given cell $K\in\cM$ and a face $\sigma\in\cF_K$, let $\mathbf{n}_{\Ksig}$ represent the unit outward normal to $K$ on $\sigma$. Additionally, for a face $\sigma\in\cF$ and an edge $e\in\cE_\sigma$, we denote by $\bo{n}_{\sigma e}$ the unit outward normal to $\sigma$, in the plane spanned by this face, along the edge $e$.

If $ X \in \mathcal{M} \cup \mathcal{F} \cup \mathcal{E}$ and $ l \in \mathbb{N} $, the space of polynomials of degree $\le l$ on $X$ is denoted by $\mathbb{P}^l(X)$. The $L^2(X)^3$-orthogonal projection on $\mathbb{P}^l(X)^3$ is written $\Pi^l_X$ and $L^2(X)^{3\times 3}$-orthogonal projection on $\mathbb P^l(X)^{3\times 3}$ is denoted as $\bos{\Pi}^l_X$. In case $\mathcal{X}=\mathcal{M}$ or  $\mathcal{X} = \mathcal{F}_\Gamma$, the notation $\mathbb{P}^l(\mathcal{X})$ denotes the space of piecewise polynomials of degree at most $l$ on $\mathcal{X}.$

\subsection{Spaces}\label{subsec_space}~

To account for the discontinuity of the discrete displacement field at matrix-fracture interfaces, the displacement degrees of freedom (DOFs) can be discontinuous across the fracture network; they are however continuous across mesh elements not on this network. To represent this, for each face, edge or vertex $z\in\cF\cup\cE\cup \cV$ and each cell $K\in\cM_z$ containing $z$, we denote by $\Kz$ the set of cells containing $z$ and on the same side of $K$ with respect to the fracture network. For two cells $K,L\in\cM_z$, we have $\Kz=\Lz$ if and only if $K,L$ are on the same side of $\Gamma$. We then denote by $\bo{v}_{\Kz}$ the displacement unknown at the face/edge/vertex $z$ corresponding to the side $K$ in the matrix; the discussion above shows that if $L$ is on the same side of $\Gamma$ as $K$ then $\bo{v}_{\Kz}=\bo{v}_{\Lz}$; however, if $K,L$ lie on different sides, then $\bo{v}_{\Kz}$ and $\bo{v}_{\Lz}$ are independent, which represents the possible discontinuity of the displacement across fractures. Note that if $z$ does not lie on $\Gamma$, then $\Kz=\cM_z$ and there is a unique displacement unknown at $z$ (still denoted by $\bo{v}_{\Kz}$), which represents the continuity of the displacement outside fractures.

Taking into account these multiple DOFs on fracture entities, the numerical scheme for \eqref{eq:model} is based on the vector-valued version of the $X_{\mathop{\mathbf{grad}},h}^1$ space of the Discrete De Rham complex \cite{pjddr}. This scheme is a higher-order extension of the one proposed in \cite{droniou2023bubble,jhr}.
The global discrete space is defined as: 
\begin{align*}
\begin{split}
\dgs\coloneqq\{\uvec{v}_h&=\big((\bo{v}_{\Ks})_{K\in\cM,~s\in \mathcal{V}_K},(\bo{v}_{\Ke})_{K\in\cM,~e\in \mathcal{E}_K},(\bo{v}_{\cK\sigma})_{K\in\cM,~\sigma\in\cF_K},(\bo{v}_K)_{K\in\cM}\big):\\&~\bo{v}_{\Ks}\in \mathbb R^3,~\bo{v}_{\Ke}\in\mathbb R^3,~\bo{v}_{\cK\sigma}\in\mathbb R^3,\text{ and }\bo{v}_K\in\mathbb R^3\},
\end{split}
\end{align*}
and its subspace incorporating homogeneous Dirichlet boundary conditions on $\partial\O$ is
\begin{align*}
\ddgs\coloneqq\{\uvec{v}_h\in\dgs\colon \bo{v}_{\Ks}=0 ~\text{if }s\in\cV^{\ext},~\bo{v}_{\Ke}=0\text{ if }e\in\cE^{\ext},\bo{v}_{\cK\sigma}=0 \text{ if }\sigma\in \cF^{\ext}\}.
\end{align*}
For each $K\in\cM$, the local discrete space gathers the components attached to a cell, its faces, edges and vertices.
\begin{align*}
\begin{split}
\dls\coloneqq\{\uvec{v}_K&=\big((\bo{v}_{\Ks})_{s\in \mathcal{V}_K},(\bo{v}_{\Ke})_{e\in \mathcal{E}_K},(\bo{v}_{\cK\sigma})_{\sigma\in\cF_K},\bo{v}_K\big):~\bo{v}_{\Ks}\in \mathbb R^3,~\bo{v}_{\Ke}\in\mathbb R^3,\\&\qquad\bo{v}_{\cK\sigma}\in\mathbb R^3,\text{ and }\bo{v}_K\in\mathbb R^3\},
\end{split}
\end{align*}
The Lagrange multiplier plays the role of approximations of $-\gamma^+_{\bf n}\sigma({\bf u})$. Its space is made of piecewise constant vectors:
\begin{equation*}
{\bf M}_{h}\coloneqq\{\blambda_h\in L^2(\Gamma)^3:~\blambda_\sigma\coloneqq(\blambda_h)|_\sigma \text{ is a constant }\forall\sigma\in\cF_{\Gamma}\}.
\end{equation*}
The normal and tangential components of $\blambda_h\in\mathbf{M}_h$ are
\begin{equation*}
\lambda_{h,{\bf n}}=\blambda_h\cdot{\bf n}^+, \quad\blambda_{h,\tau}=\blambda_h-\lambda_{h,{\bf n}}{\bf n}^+,
\end{equation*}
and the discrete dual cone is
\begin{equation*}
\cC_{f,h}\coloneqq\{\blambda_h\in{\bf M}_h\colon \lambda_{h,{\bf n}}\geq 0,~\lvert \blambda_{h,\tau}\rvert\leq g\}\subset\cC_f.
\end{equation*}


\subsection{Gradient and Reconstruction Operators}\label{subsec reconstruct}

For each edge $e\in\cE_K$, define the local edge reconstruction as $\Upsilon_{Ke}^2:\ddgs\rightarrow \mathbb P^2(e)^3$ such that
$$
\Upsilon_{Ke}^2\underline{\bv}_h(\bx_s)=\bo{v}_{\Ks}\quad \forall s\in\cV_e,\quad\text{and}\quad
\Pi^0_e\Upsilon^2_{Ke}\underline{\bv}_h=\bo{v}_{\Ke}.
$$

Next, for any face $\sigma\in\cF_K$ we introduce the local face gradient reconstruction $\mathbb G^1_{K\sigma}:\ddgs\rightarrow \mathbb P^1(\sigma)^{3\times 2}$ and face displacement reconstruction $\Upsilon^2_{K\sigma}:\ddgs\rightarrow \mathbb P^2(\sigma)^3$ in the following manner: for all $\bos{\xi}\in\mathbb P^1(\sigma)^{3\times 2}$ and $\bos{\eta}\in\mathbb Z(\sigma)^3\coloneqq\big[(\bx-\overline{\bx}_\sigma)\mathbb P^2(\sigma)\big]^3$,
\begin{equation*}
\int_\sigma \G^1_{K\sigma}\underline{\bv}_h\colon\bos{\xi}=-\int_\sigma \bo{v}_{\cK\sigma}\cdot\DIV(\bos{\xi})+\sum_{e\in\cE_\sigma}\int_e \Up^2_{Ke}\underline{\bv}_h\cdot\bo{\bos{\xi}}{{\bf n}_{\sigma e}},
\end{equation*}
and
\begin{equation}\label{eq:def.upsilon2}
\int_\sigma \Up^2_{K\sigma}\underline{\bv}_h\cdot\DIV(\bos{\eta})=-\int_\sigma \G^1_{K\sigma}\underline{\bv}_h\colon\bos{\eta}+\sum_{e\in\cE_\sigma}\int_e \Up^2_{Ke}\underline{\bv}_h\cdot\bos{\eta}{\bo{\bf n}_{\sigma e}},
\end{equation}
where $\mathbb Z(\sigma)^3$ is identified with a space of matrix-valued functions (with independent copies of $(\bx-\overline{\bx}_\sigma)\mathbb P^2(\sigma)$ on each row), $\overline{\bx}_\sigma$ is the barycenter of face $\sigma$, and the divergences of matrices are taken row-wise.
When writing $\G^1_{K\sigma}\underline{\bv}_h$ and $\bos{\xi}$ above as $3\times 2$-matrices, we actually consider that each row of this matrix is tangent to $\sigma$ (which gives meaning to $\DIV(\bos{\xi})$, the divergence being in the coordinate system on $\sigma$); likewise, each copy of $\mathbb Z(\sigma)$ in $\mathbb Z(\sigma)^3$ is interpreted as a space of tangent polynomials to $\sigma$, and $\mathbb Z(\sigma)^3$ as a space of $3\times 2$ polynomial-valued matrices. Note also that \eqref{eq:def.upsilon2} uniquely defines $\Up^2_{K\sigma}$ because $\DIV:\mathbb Z(\sigma)\to \mathbb{P}^2(\sigma)$ is an isomorphism \cite[Corollary 7.3]{Arnold:18}.

Let $\sigma\in\cF_\Gamma$ be a fracture face, and let $K,L$ be the two cells on each side of $\sigma$, with $K$ lying on the positive side of $\sigma$ (that is, $\bo{n}^+$ introduced in Section~\ref{model} is the outer normal to $K$ on $\sigma$). We define the displacement jump operator on $\sigma$ as $\sjump{\cdot}_{\sigma}:\ddgs\rightarrow \mathbb{P}^2(\sigma)^3$ such that, for all $\uvec{v}_h\in\ddgs,$
\begin{align}\label{eq:discrete.jump}
\sjump{\uvec{v}_h}_{\sigma}\coloneqq \up{K\sigma}\uvec{v}_h-\up{L\sigma}\uvec{v}_h.
\end{align}

Next, for any element $K\in\cM$, we introduce the local element gradient reconstruction $\mathbb G^1_{K}:\ddgs\rightarrow \mathbb P^1(K)^{3\times 3}$ and element displacement reconstruction $\Upsilon^2_{K}:\ddgs\rightarrow \mathbb P^2(K)^3$ in the following manner: for all $\bos{\xi}\in\mathbb P^1(K)^{3\times 3}$ and $\bos{\eta}\in\mathbb Z(K)^3\coloneqq\big[(\bx-\overline{\bx}_K)\mathbb P^2(K)\big]^3$,
\begin{equation}\label{eq:G1K}
\int_K \G^1_{K}\underline{\bv}_h\colon\bos{\xi}=-\int_K \bo{v}_K\cdot\DIV(\bos{\xi})+\sum_{\sigma\in\cF_K}\int_\sigma \Up^2_{K\sigma}\underline{\bv}_h\cdot\bos{\xi}{\bo{\bf n}_{K\sigma}},
\end{equation}
and
\begin{equation*} 
\int_K \Up^2_{K}\underline{\bv}_h\cdot\DIV(\bos{\eta})=-\int_K \G^1_{K}\underline{\bv}_h\colon\bos{\eta}+\sum_{\sigma\in\cF_K}\int_\sigma \Up^2_{K\sigma}\underline{\bv}_h\cdot\bos{\eta}{\bo{\bf n}_{K\sigma}}
\end{equation*}
(with the same considerations for $\mathbb Z(K)^3$ and divergence of matrices as above). Here, $\overline{\bx}_K$ is the barycenter for each cell~$K.$

These local jump, gradient and displacement reconstructions are patched together to create their global piecewise polynomial counterparts $\sjump{\cdot}_h\colon \ddgs\rightarrow \mathbb{P}^2(\cF_\Gamma)^3,~\dgr{h}\colon \ddgs\rightarrow \mathbb P^1(\cM)^{3\times 3}$, and $\up{h}\colon \ddgs\rightarrow \mathbb P^2(\cM)^3$: for all $\uvec{v}_h\in\ddgs$,
\begin{alignat*}{2}
(\sjump{\uvec{v}_h}_h)_{|\sigma}&=\sjump{\uvec{v}_h}_{\sigma}&&\qquad\forall \sigma\in \mathcal{F}_\Gamma,\\
(\dgr{h}\uvec{v}_h)_{|K}&=\dgr{K}\uvec{v}_h&&\qquad\forall K\in \mathcal{M},\\
(\up{h}\uvec{v}_h)_{|K}&=\up{K}\uvec{v}_h&&\qquad\forall K\in \mathcal{M}.
\end{alignat*}

\subsection{Discrete Formulation}\label{subsec discrete_formulation}

Define the discrete symmetric gradient $\epsilon_h$, divergence $\DIV_h$, and stress tensor $\sigma_h$ as: for all $\uvec{v}_h\in\dgs$,
\begin{alignat}{2}
\be_h(\uvec{v}_h)&\coloneqq\frac{1}{2}(\dgr{h}\uvec{v}_h+(\dgr{h}\uvec{v}_h)^T),\quad\DIV_h(\uvec{v}_h)\coloneqq \tr(\be_h(\uvec{v}_h)),\nonumber\\
 \bsig_h(\uvec{v}_h)&\coloneqq \frac{E}{1+\nu}\left(\be_h(\uvec{v}_h)+\frac{\nu}{1-2\nu} \DIV_h(\uvec{v}_h)\Id\right).
 \label{eq:def.sigma.h}
\end{alignat}

The scheme for the contact mechanics problem reads: Find $(\underline{\bf u}_h,\blambda_h)\in \ddgs\times\mathcal C_{f,h}$ such that, for all $(\underline{\bf v}_h,\bmu_h)\in \ddgs\times\mathcal C_{f,h}$,
\begin{subequations}\label{eq:scheme}
\begin{align}
\int_{\Omega}\bsig_h(\underline{\bf u}_h)\colon\be_h(\underline{\bf v}_h)+\mu_{1}\mathbb S_{h}(\underline{\bf u}_h,\underline{\bf v}_h)+\int_{\Gamma}\blambda_h\cdot\sjump{\underline{\bf v}_h}_h&=\sum_{K\in\cM}\int_K {\bf f}\cdot\bo{v}_K ,\label{eq:scheme.displacement}\\
\int_{\Gamma}(\bmu_h-\blambda_h)\cdot\sjump{\underline{\bf u}_h}_h&\leq 0\label{eq:scheme.lagrange}.
\end{align}
\end{subequations}
Here $\mathbb S_{h}$ is the stabilisation bilinear form defined by
\begin{equation*}
\mathbb S_{h}(\underline{\bf u}_h,\underline{\bf v}_h)\coloneqq \sum_{K\in\cM}\mathbb S_K(\underline{\bf u}_h,\underline{\bf v}_h)
\end{equation*}
with local stabilisation bilinear forms $\mathbb S_K:\mathbb X^1_{{\bf grad},h}\times \mathbb X^1_{{\bf grad},h}\rightarrow \mathbb R$ given by 
\begin{equation}\label{eq:stabK}
  \begin{aligned}
    \mathbb S_{K}(\underline{\bf u}_h,\underline{\bf v}_h)={}&h^{-1}_K\sum_{\sigma\in\cF_K}
      \int_\sigma (\Upsilon^2_K\underline{\bf u}_h-\Upsilon^2_{\Ksig}\underline{{\bf u}}_h)\cdot(\Upsilon^2_K\underline{\bf v}_h-\Upsilon^2_{\Ksig}\underline{{\bf v}}_h)\\
    &+\sum_{e\in\cE_K}
      \int_e(\Upsilon^2_K\underline{\bf u}_h-\Upsilon^2_{K e}\underline{{\bf u}}_h)\cdot(\Upsilon^2_K\underline{\bf v}_h-\Upsilon^2_{Ke}\underline{{\bf v}}_h)\\
    &+h_K\sum_{s\in\mathcal V_K}(\Upsilon^2_K\underline{\bf u}_h(\bx_s)-\bo{u}_\Ks)\cdot(\Upsilon^2_K\underline{\bf v}_h(\bx_s)-\bo{v}_\Ks).
  \end{aligned}
\end{equation}
Next we define the broken $H^1$-like semi-norm on $\dgs$ as: for all $\uvec{v}_h\in\dgs:$
\begin{align}\label{discrete_norm}
\lVert \uvec{v}_h\rVert_{1,h}\coloneqq \left(\sum_{K\in\cM}\lVert \uvec{v}_h\rVert^2_{1,K}\right)^{1/2} \text{ with }\lVert\uvec{v}_h\rVert^2_{1,K}\coloneq \lVert \mathbb G^1_K\uvec{v}_h\rVert^2_{L^2(K)}+\mathbb S_K(\uvec{v}_h,\uvec{v}_h).
\end{align}
Note that, through the terms $\mathbb S_K$, this norm bounds the jump between each element components and its faces/edges/vertices components. Due to the single-valuedness of components on faces/edges/vertices that are not on the fracture network, this norm therefore bounds inter-element jumps that do not occur across the fracture network.

\begin{theorem}[Discrete Korn Inequality]\label{thm:korn}
It holds
\begin{alignat}{2}\label{eq:dis_korn}
\lVert \uvec{v}_h\rVert_{1,h}^2&\lesssim \lVert \be_h(\uvec{v}_h)\rVert^2_{L^2(\O\setminus\overline{\Gamma})}+\mathbb S_h(\uvec{v}_h,\uvec{v}_h)&&\qquad\forall\uvec{v}_h\in\ddgs.
\end{alignat}
\end{theorem}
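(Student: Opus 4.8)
The plan is to transfer the estimate to the piecewise‑quadratic element reconstruction $\Upsilon^2_h\uvec{v}_h$, for which a classical broken Korn inequality applies, and to absorb every term bridging the discrete unknowns and this reconstruction into the stabilisation $\mathbb{S}_h$. The first ingredient is a Brenner‑type broken Korn inequality: for all $\bo{w}\in H^1(\cM)^3$,
\[
\sum_{K\in\cM}\|\bo{w}\|_{H^1(K)}^2\ \lesssim\ \sum_{K\in\cM}\|\bos{\epsilon}(\bo{w})\|_{L^2(K)}^2\ +\ \sum_{\sigma\in\cF^\inte\setminus\cF_\Gamma}h_\sigma^{-1}\|\sjump{\bo{w}}_\sigma\|_{L^2(\sigma)}^2\ +\ \sum_{\sigma\in\cF^\ext}h_\sigma^{-1}\|\bo{w}\|_{L^2(\sigma)}^2 ,
\]
the symmetric gradient and the exterior traces being taken cell‑wise. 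The key point is that only mesh faces off $\Gamma$ enter: since the mesh is compliant with $\Gamma$ and no connected component of $\O\setminus\overline{\Gamma}$ is disconnected from $\partial\O$ by $\Gamma$, this follows by applying the standard broken Korn inequality on each such component, whose exterior faces (of positive measure, by assumption) remove the rigid‑motion freedom; this is the ingredient used, in the same geometric setting, in \cite{jhr}.

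Applying this to $\bo{w}=\Upsilon^2_h\uvec{v}_h$, the last two sums are controlled by $\mathbb{S}_h(\uvec{v}_h,\uvec{v}_h)$. On an interior face $\sigma=K|L\in\cF^\inte\setminus\cF_\Gamma$ all the degrees of freedom that determine $\Upsilon^2_{K\sigma}$ — namely $\bo{v}_{\cK\sigma}$, the $\bo{v}_{\Ke}$ for $e\in\cE_\sigma$ and the $\bo{v}_{\Ks}$ for $s\in\cV_\sigma$ (entering through the edge reconstructions) — are single‑valued along $\sigma$, hence $\Upsilon^2_{K\sigma}\uvec{v}_h=\Upsilon^2_{L\sigma}\uvec{v}_h$; writing $\sjump{\Upsilon^2_h\uvec{v}_h}_\sigma=(\Upsilon^2_K\uvec{v}_h-\Upsilon^2_{K\sigma}\uvec{v}_h)-(\Upsilon^2_L\uvec{v}_h-\Upsilon^2_{L\sigma}\uvec{v}_h)$ and using that each term $\|\Upsilon^2_K\uvec{v}_h-\Upsilon^2_{K\sigma}\uvec{v}_h\|_{L^2(\sigma)}^2$ is one nonnegative summand of \eqref{eq:stabK}, hence $\le h_K\,\mathbb{S}_K(\uvec{v}_h,\uvec{v}_h)$, yields $h_\sigma^{-1}\|\sjump{\Upsilon^2_h\uvec{v}_h}_\sigma\|_{L^2(\sigma)}^2\lesssim\mathbb{S}_K(\uvec{v}_h,\uvec{v}_h)+\mathbb{S}_L(\uvec{v}_h,\uvec{v}_h)$. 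On an exterior face $\sigma\in\cF^\ext$ of a cell $K$, all those degrees of freedom vanish, so (by well‑posedness of the reconstructions) $\Upsilon^2_{K\sigma}\uvec{v}_h=0$ and the same bound gives $h_\sigma^{-1}\|\Upsilon^2_K\uvec{v}_h\|_{L^2(\sigma)}^2\lesssim\mathbb{S}_K(\uvec{v}_h,\uvec{v}_h)$. Summing over faces — each cell having a mesh‑regularity‑bounded number of faces — both sums are $\lesssim\mathbb{S}_h(\uvec{v}_h,\uvec{v}_h)$.

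It remains to compare $\nabla\Upsilon^2_K\uvec{v}_h$ and $\mathbb{G}^1_K\uvec{v}_h$, both in $\mathbb{P}^1(K)^{3\times3}$. Subtracting \eqref{eq:G1K} from the integration‑by‑parts identity $\int_K\nabla\Upsilon^2_K\uvec{v}_h:\bos{\xi}=-\int_K\Upsilon^2_K\uvec{v}_h\cdot\DIV\bos{\xi}+\sum_{\sigma\in\cF_K}\int_\sigma\Upsilon^2_K\uvec{v}_h\cdot\bos{\xi}\bo{n}_{K\sigma}$ (valid for $\bos{\xi}\in\mathbb{P}^1(K)^{3\times3}$) gives
\[
\int_K(\nabla\Upsilon^2_K\uvec{v}_h-\mathbb{G}^1_K\uvec{v}_h):\bos{\xi}=-\int_K(\Upsilon^2_K\uvec{v}_h-\bo{v}_K)\cdot\DIV\bos{\xi}+\sum_{\sigma\in\cF_K}\int_\sigma(\Upsilon^2_K\uvec{v}_h-\Upsilon^2_{K\sigma}\uvec{v}_h)\cdot\bos{\xi}\bo{n}_{K\sigma} .
\]
Because $\DIV\bos{\xi}$ is a constant vector, the first term vanishes once the commutation identity $\Pi^0_K\Upsilon^2_K\uvec{v}_h=\bo{v}_K$ is established; this follows by testing the definition of $\Upsilon^2_K$ with $\bos{\eta}\in\mathbb Z(K)^3$ whose $i$‑th row is $\tfrac13 c_i(\bx-\overline{\bx}_K)$ — so $\DIV\bos{\eta}=(c_i)_i$ is constant and $\bos{\eta}\in\mathbb{P}^1(K)^{3\times3}$ — and then re‑using \eqref{eq:G1K} with $\bos{\xi}=\bos{\eta}$. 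Bounding the boundary sum by $\|\Upsilon^2_K\uvec{v}_h-\Upsilon^2_{K\sigma}\uvec{v}_h\|_{L^2(\sigma)}\le (h_K\,\mathbb{S}_K(\uvec{v}_h,\uvec{v}_h))^{1/2}$, a discrete trace inequality $\|\bos{\xi}\|_{L^2(\sigma)}\lesssim h_K^{-1/2}\|\bos{\xi}\|_{L^2(K)}$ and $\#\cF_K\lesssim1$, then taking $\bos{\xi}=\nabla\Upsilon^2_K\uvec{v}_h-\mathbb{G}^1_K\uvec{v}_h$, yields $\|\nabla\Upsilon^2_K\uvec{v}_h-\mathbb{G}^1_K\uvec{v}_h\|_{L^2(K)}\lesssim\mathbb{S}_K(\uvec{v}_h,\uvec{v}_h)^{1/2}$. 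Taking symmetric parts and recalling $\be_h(\uvec{v}_h)|_K=\tfrac12(\mathbb{G}^1_K\uvec{v}_h+(\mathbb{G}^1_K\uvec{v}_h)^T)$ gives $\|\bos{\epsilon}(\Upsilon^2_K\uvec{v}_h)\|_{L^2(K)}\lesssim\|\be_h(\uvec{v}_h)\|_{L^2(K)}+\mathbb{S}_K(\uvec{v}_h,\uvec{v}_h)^{1/2}$. Summing over $K$ and inserting this, together with the second paragraph, into the broken Korn inequality produces $\sum_K\|\Upsilon^2_K\uvec{v}_h\|_{H^1(K)}^2\lesssim\|\be_h(\uvec{v}_h)\|_{L^2(\O\setminus\overline{\Gamma})}^2+\mathbb{S}_h(\uvec{v}_h,\uvec{v}_h)$; since moreover $\|\mathbb{G}^1_K\uvec{v}_h\|_{L^2(K)}\le\|\nabla\Upsilon^2_K\uvec{v}_h\|_{L^2(K)}+\mathbb{S}_K(\uvec{v}_h,\uvec{v}_h)^{1/2}\le\|\Upsilon^2_K\uvec{v}_h\|_{H^1(K)}+\mathbb{S}_K(\uvec{v}_h,\uvec{v}_h)^{1/2}$, summing and adding $\mathbb{S}_h(\uvec{v}_h,\uvec{v}_h)$ to both sides gives $\|\uvec{v}_h\|_{1,h}^2\lesssim\|\be_h(\uvec{v}_h)\|_{L^2(\O\setminus\overline{\Gamma})}^2+\mathbb{S}_h(\uvec{v}_h,\uvec{v}_h)$, i.e.\ \eqref{eq:dis_korn}.

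The step I expect to be most delicate is the jump control in the second paragraph: because the face reconstruction $\Upsilon^2_{K\sigma}$ is now quadratic and only implicitly defined (through $\mathbb{G}^1_{K\sigma}$ and the edge reconstructions), one must genuinely control the full $\mathbb{P}^2(\sigma)^3$ jump $\sjump{\Upsilon^2_h\uvec{v}_h}_\sigma$ — not merely a low‑order projection of it — by $\mathbb{S}_h$, and carefully track the side‑dependent degrees of freedom to see that $\Upsilon^2_{K\sigma}$ is single‑valued on faces off $\Gamma$ and vanishes on $\partial\O$; this is precisely where the argument departs from the first‑order analysis of \cite{jhr}. A subsidiary technical point is the commutation identity $\Pi^0_K\Upsilon^2_K\uvec{v}_h=\bo{v}_K$ used to eliminate the volumetric term in the third paragraph.
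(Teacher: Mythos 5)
Your proposal is correct and follows essentially the same route as the paper's proof: a broken Korn inequality applied to the piecewise-quadratic reconstruction $\Upsilon^2_h\uvec{v}_h$ with jump terms only on faces off $\Gamma$, control of these jumps through the single-valuedness (or vanishing) of $\Upsilon^2_{K\sigma}\uvec{v}_h$ on non-fracture faces together with the stabilisation, and the comparison of $\nabla\Upsilon^2_K\uvec{v}_h$ with $\mathbb G^1_K\uvec{v}_h$ via the identity you derive (which is the paper's \eqref{eq:gradPotGrad.int}). The only difference is organisational: you re-derive the commutation identity \eqref{eq:PiK.Pot} and the gradient comparison inline, whereas the paper invokes them from Lemma \ref{lem:prop.reconstructions} and the preparatory lemma establishing \eqref{eq:norm_equiv}--\eqref{eq:vk.bound}.
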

\begin{proof}
See Section \ref{sec:Korn}.
\end{proof}

For notational convenience, let us define the discrete energy inner product $\langle\cdot,\cdot\rangle_{\mathrm{en},h}$ such that, for all $\uvec{v}_h,\uvec{w}_h\in\ddgs$,
\begin{align}\label{eq:def.inner.Eh}
\langle\uvec{v}_h,\uvec{w}_h\rangle_{\mathrm{en},h}\coloneqq\int_{\Omega}\bsig_h(\underline{\bf v}_h)\colon\be_h(\underline{\bf w}_h)+\mu_{1}\mathbb S_h(\underline{\bf v}_h,\underline{\bf w}_h),
\end{align}
where $\mathbb S_h=\sum_{K\in\cM}\mathbb S_K(\uvec{u}_h,\uvec{v}_h)$ ~and $\mu_{1}$ is, as in Section~\ref{model}, a lower bound of $E/(1+\nu)$. This bilinear form is indeed an inner product by the discrete Korn inequality \eqref{eq:dis_korn}, $\mu_{1}\leq \frac{E}{1+\nu}$, and the definition \eqref{eq:def.sigma.h} of $\bsig_h$, which shows that
\begin{align}\label{eq:norm_energy}
\mu_{1}\lVert \uvec{v}_h\rVert_{1,h}^2\leq \widetilde{C} \lVert \uvec{v}_h\rVert_{\mathrm{en},h}^2,
\end{align}
where $\lVert\cdot\rVert_{\mathrm{en},h}$ is the norm induced by the inner product $\langle\cdot,\cdot\rangle_{\mathrm{en},h}$ and the constant $\widetilde{C}$ is independent of $E$ and $\nu$.

\begin{defn}[${H}^{-1/2}$-like norm on $L^2(\Gamma)^3$]\label{def:H^{-1/2}}
The norm $\lVert \cdot \rVert_{-1/2,\Gamma}$ on  $L^2(\Gamma)^3$ is defined by: for all $\blambda\in L^2(\Gamma)^3$,
\begin{align}\label{eq:def.dis{-1/2}norm}
\lVert \blambda\rVert_{-1/2,\Gamma}\coloneqq\sup\limits_{\bo{v}\in {H}^1_0(\O\setminus\Gamma)^{3}\setminus\{0\}}{}\frac{\int_{\Gamma}\blambda\cdot\sjump{\bo{v}}}{\lVert \bo{v}\rVert_{{H}^1(\O\setminus\Gamma)}}.
\end{align}
\end{defn}

\begin{remark}[Comparison with the norm in \cite{jhr}]
This discrete $H^{-1/2}$-like norm is simpler than that in \cite[Definition 4.3]{jhr}, which is made possible by our use of a higher-order scheme for the displacement that involves face degrees of freedom on both sides of the fracture.
\end{remark}

\begin{theorem}[Discrete inf-sup condition]\label{thm:inf-sup}
It holds
\begin{equation}\label{eq:def.inf-sup}
  \sup\limits_{\uvec{v}_h\in\ddgs\setminus\{0\}}\frac{\int_\Gamma \blambda_h\cdot\sjump{\uvec{v}_h}_h}{\lVert \uvec{v}_h\rVert_{1,h}}\gtrsim\lVert\blambda_h\rVert_{-1/2,\Gamma}\qquad\forall\blambda_h\in\bo{M}_h.
\end{equation}
\end{theorem}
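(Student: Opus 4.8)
The plan is to leverage the duality defining both $\lVert\cdot\rVert_{-1/2,\Gamma}$ (Definition~\ref{def:H^{-1/2}}) and the left-hand side of \eqref{eq:def.inf-sup}: given $\blambda_h\in\bo{M}_h$, take an arbitrary $\bo{v}\in H^1_0(\O\setminus\overline{\Gamma})^3$ that is smooth up to $\Gamma$ from each side, use its DDR interpolate $\uvec{v}_h:=\uvec{I}_h\bo{v}\in\ddgs$ as a competitor in the discrete supremum, show that the discrete pairing of $\uvec{v}_h$ against $\blambda_h$ coincides with the continuous pairing of $\bo{v}$ against $\blambda_h$, and bound $\lVert\uvec{I}_h\bo{v}\rVert_{1,h}$ by $\lVert\bo{v}\rVert_{H^1(\O\setminus\overline{\Gamma})}$; a density argument over such $\bo{v}$ then transfers the continuous supremum to the discrete one. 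The non-trivial ingredient is that the \emph{mean of the discrete jump over each fracture face equals the mean of the continuous jump}.

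That fact follows from an elementary projection property of the face reconstruction: for every $\uvec{w}_h\in\ddgs$, every $\sigma\in\cF$ and every $K$ with $\sigma\in\cF_K$ one has $\Pi^0_\sigma\bigl(\up{K\sigma}\uvec{w}_h\bigr)=\bo{v}_{\cK\sigma}$. To see this, fix $\bo{c}\in\R^3$ and test the defining relation of $\up{K\sigma}$ with the matrix field $\bos{\eta}_{\bo{c}}\in\mathbb Z(\sigma)^3$ whose $i$-th row is $\tfrac12 c_i(\bx-\overline{\bx}_\sigma)$: since the in-plane divergence of $\bx-\overline{\bx}_\sigma$ equals $2$ we have $\DIV(\bos{\eta}_{\bo{c}})=\bo{c}$, and $\bos{\eta}_{\bo{c}}$ belongs simultaneously to $\mathbb Z(\sigma)^3$ and to $\mathbb P^1(\sigma)^{3\times 3}$, hence is also admissible in the defining relation of the face gradient $\dgr{K\sigma}$. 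Substituting the latter relation into the former cancels the common boundary sum $\sum_{e\in\cE_\sigma}\int_e\up{Ke}\uvec{w}_h\cdot\bos{\eta}_{\bo{c}}\bo{n}_{\sigma e}$ and leaves $\int_\sigma\up{K\sigma}\uvec{w}_h\cdot\bo{c}=\int_\sigma\bo{v}_{\cK\sigma}\cdot\bo{c}$; as $\bo{c}$ is arbitrary and $\bo{v}_{\cK\sigma}$ is constant, this is the claimed identity. Applying it on a fracture face $\sigma\in\cF_\Gamma$ with positive-side cell $K$ and negative-side cell $L$ to $\uvec{w}_h=\uvec{I}_h\bo{v}$, for which the interpolator sets the face value on the $K$-side (resp.\ $L$-side) to $\Pi^0_\sigma(\gamma^+\bo{v})$ (resp.\ $\Pi^0_\sigma(\gamma^-\bo{v})$), gives $\int_\sigma\up{K\sigma}(\uvec{I}_h\bo{v})=\int_\sigma\gamma^+\bo{v}$ and $\int_\sigma\up{L\sigma}(\uvec{I}_h\bo{v})=\int_\sigma\gamma^-\bo{v}$, hence $\int_\sigma\sjump{\uvec{I}_h\bo{v}}_\sigma=\int_\sigma\sjump{\bo{v}}$ for all $\sigma\in\cF_\Gamma$ by \eqref{eq:discrete.jump}.

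Since $\blambda_h$ is piecewise constant on $\cF_\Gamma$, this yields $\int_\Gamma\blambda_h\cdot\sjump{\uvec{I}_h\bo{v}}_h=\sum_{\sigma\in\cF_\Gamma}\blambda_\sigma\cdot\int_\sigma\sjump{\uvec{I}_h\bo{v}}_\sigma=\sum_{\sigma\in\cF_\Gamma}\blambda_\sigma\cdot\int_\sigma\sjump{\bo{v}}=\int_\Gamma\blambda_h\cdot\sjump{\bo{v}}$. Combining with the stability of the DDR interpolator in the discrete semi-norm, $\lVert\uvec{I}_h\bo{v}\rVert_{1,h}\lesssim\lVert\bo{v}\rVert_{H^1(\O\setminus\overline{\Gamma})}$ — a standard estimate obtained cell by cell on $\O\setminus\overline{\Gamma}$ from the commutation of $\dgr{K}$ with $\bos{\Pi}^1_K$ and the approximation properties of $\up{K}$ — we obtain, for every such $\bo{v}$ with $\int_\Gamma\blambda_h\cdot\sjump{\bo{v}}\neq 0$ (whence $\uvec{I}_h\bo{v}\neq 0$),
\begin{equation*}
\sup_{\uvec{v}_h\in\ddgs\setminus\{0\}}\frac{\int_\Gamma\blambda_h\cdot\sjump{\uvec{v}_h}_h}{\lVert\uvec{v}_h\rVert_{1,h}}\ \geq\ \frac{\int_\Gamma\blambda_h\cdot\sjump{\uvec{I}_h\bo{v}}_h}{\lVert\uvec{I}_h\bo{v}\rVert_{1,h}}\ \gtrsim\ \frac{\int_\Gamma\blambda_h\cdot\sjump{\bo{v}}}{\lVert\bo{v}\rVert_{H^1(\O\setminus\overline{\Gamma})}}.
\end{equation*}
Taking the supremum of the right-hand side over all $\bo{v}$ in this class, and using that the jump trace is continuous from $H^1_0(\O\setminus\overline{\Gamma})^3$ into $L^2(\Gamma)^3$ while fields smooth up to $\Gamma$ and vanishing near $\partial\O$ are dense in $H^1_0(\O\setminus\overline{\Gamma})^3$, the right-hand side recovers $\lVert\blambda_h\rVert_{-1/2,\Gamma}$, which proves \eqref{eq:def.inf-sup}.

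The genuinely delicate point is the projection property of $\up{K\sigma}$, and specifically the cancellation of the edge terms: it relies on $\bos{\eta}_{\bo{c}}$ being admissible in the defining relations of \emph{both} $\up{K\sigma}$ and $\dgr{K\sigma}$, i.e.\ on $\mathbb Z(\sigma)^3$ containing the linear matrix fields proportional to $\bx-\overline{\bx}_\sigma$, which also lie in $\mathbb P^1(\sigma)^{3\times 3}$ — a compatibility deliberately built into these reconstructions, and which is precisely what makes the simpler $H^{-1/2}$-like norm of Definition~\ref{def:H^{-1/2}} usable here (as anticipated in the remark following that definition). The remaining ingredients — the $\lVert\cdot\rVert_{1,h}$-stability of $\uvec{I}_h$ and the density/trace-continuity facts, for which the assumption that $\Gamma$ does not disconnect $\Omega$ from $\partial\Omega$ is used — are routine within the DDR framework adapted to the fractured domain, and I would simply invoke them.
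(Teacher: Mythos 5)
Your overall strategy is the right one, and your derivation of the face projection property $\Pi^0_\sigma\bigl(\up{K\sigma}\uvec{w}_h\bigr)=\bo{w}_{\cK\sigma}$ (testing the definitions of $\up{K\sigma}$ and $\dgr{K\sigma}$ with the linear field $\bos{\eta}_{\bo c}$, which lies in both $\mathbb Z(\sigma)^3$ and $\mathbb P^1(\sigma)^{3\times3}$) is correct; this is exactly the identity \eqref{eq:Pisigma.Pot} that the paper also relies on, and it is indeed what makes the simple $H^{-1/2}$-like norm usable. The cancellation $\int_\sigma\sjump{\cdot}_\sigma=\int_\sigma\sjump{\bo v}$ then follows because the face degree of freedom of the interpolate is the face average of the trace on each side.

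The genuine gap is the stability estimate $\lVert\mathbb I_h\bo v\rVert_{1,h}\lesssim\lVert\bo v\rVert_{H^1(\O\setminus\overline\Gamma)}$ for the \emph{nodal} interpolator $\mathbb I_h$, which you dismiss as routine. It is false with a constant independent of $h$ and $\bo v$, even for smooth $\bo v$: the vertex values $\bo v|_K(\bx_s)$ are not controlled by the $H^1$-norm in 3D (take $\bo v$ a smooth bump of height $1$ supported in a ball of radius $\epsilon\ll h$ around a vertex; then $\lVert\nabla\bo v\rVert_{L^2}\sim\epsilon^{1/2}$ while $\lVert\mathbb I_h\bo v\rVert_{1,h}\gtrsim h^{1/2}$ through the vertex part of the stabilisation). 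Commutation with $\bos\Pi^1_K$ and approximation properties give consistency bounds in terms of higher Sobolev norms, not $H^1$-stability. The density argument cannot repair this, because the constant relating $\lVert\mathbb I_h\bo v\rVert_{1,h}$ to $\lVert\bo v\rVert_{H^1}$ must be uniform over the class of competitors for the supremum defining $\lVert\blambda_h\rVert_{-1/2,\Gamma}$ to be recovered. This is precisely why the paper introduces a Cl\'ement-type \emph{averaged} interpolator $\mathbb I^a_h$ (Section 5.2), whose vertex and edge unknowns are weighted local averages and whose face unknowns are exact face averages of the one-sided traces — so the same mean-of-jump cancellation goes through — and proves its $H^1(\O\setminus\Gamma)$-stability in Proposition 5.3; that stability proof is the actual substance of the inf-sup argument and is missing from your proposal.
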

\begin{proof}
See Section \ref{sec:Inf-sup}.
\end{proof}

\begin{prop}[Existence and uniqueness result]
There exists a unique solution $(\uvec{u}_h,\blambda_h)\in\ddgs\times\cC_{f,h}$ to \eqref{eq:scheme}.
\end{prop}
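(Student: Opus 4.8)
The plan is to cast the scheme \eqref{eq:scheme} as a variational inequality of the second kind and invoke the standard theory for such problems (e.g.\ via the Lions--Stampacchia framework applied to the displacement, combined with a fixed-point/monotonicity argument for the multiplier, or more directly as a saddle-point problem for a convex--concave Lagrangian). The first step is to reformulate: observe that $(\uvec{u}_h,\blambda_h)$ solves \eqref{eq:scheme} if and only if $\uvec{u}_h$ minimises over $\ddgs$ the functional
\begin{equation*}
J(\uvec{v}_h)\coloneqq\tfrac12\langle\uvec{v}_h,\uvec{v}_h\rangle_{\mathrm{en},h}-\sum_{K\in\cM}\int_K\bo{f}\cdot\bo{v}_K+\sup_{\bmu_h\in\cC_{f,h}}\int_\Gamma\bmu_h\cdot\sjump{\uvec{v}_h}_h,
\end{equation*}
and $\blambda_h$ is the associated maximiser in the inner $\sup$. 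The key point is that, because $\cC_{f,h}=\{\blambda_h\in\bo{M}_h:\lambda_{h,\bo n}\ge0,\ |\blambda_{h,\tau}|\le g\}$ is (faceswise) a product of a half-space and a ball, the inner supremum is computed explicitly: on each $\sigma\in\cF_\Gamma$ it equals $|\sigma|\big(g|(\sjump{\uvec{v}_h}_\sigma)_\tau^{\mathrm{avg}}|+ (\text{positive part of the averaged normal jump})\cdot(+\infty)\big)$ — more precisely it is finite (and equal to $g\int_\sigma|(\sjump{\uvec{v}_h}_h)_\tau|$-type term after using the piecewise-constant structure) exactly when the averaged normal jump is $\le 0$, and $+\infty$ otherwise. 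Thus $J$ is a proper, convex, lower semicontinuous functional on the finite-dimensional space $\ddgs$, consisting of a quadratic coercive term, a linear term, and a convex nonnegative (possibly $+\infty$) term.

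The second step is coercivity and existence. By Theorem~\ref{thm:korn} and \eqref{eq:norm_energy}, $\langle\cdot,\cdot\rangle_{\mathrm{en},h}$ is an inner product on $\ddgs$ with $\mu_1\|\uvec{v}_h\|_{1,h}^2\le\widetilde C\|\uvec{v}_h\|_{\mathrm{en},h}^2$, so the quadratic part of $J$ grows like $\|\uvec{v}_h\|_{1,h}^2$; the linear term is controlled by $\|\bo f\|_{L^2(\O)}\|\uvec{v}_h\|_{1,h}$ (the cell components $\bo v_K$ being part of the DOFs bounded by $\|\uvec{v}_h\|_{1,h}$, using a discrete $L^2$--control that follows from the stabilisation and reconstruction properties), and the contact term is $\ge0$. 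Hence $J(\uvec{v}_h)\to+\infty$ as $\|\uvec{v}_h\|_{1,h}\to\infty$ on the (closed, convex) effective domain, and since $\ddgs$ is finite-dimensional, $J$ attains a minimiser $\uvec{u}_h$; strict convexity of the quadratic part gives uniqueness of $\uvec{u}_h$. Writing the optimality condition for this constrained convex minimisation (subdifferential calculus, or a direct Lagrange-multiplier argument using that the effective-domain constraint is affine and the Slater-type condition $\sjump{\uvec{0}}_h=0$ holds) produces $\blambda_h\in\cC_{f,h}$ such that \eqref{eq:scheme.displacement} holds, and \eqref{eq:scheme.lagrange} is exactly the statement that $\blambda_h$ realises the inner supremum at $\uvec{u}_h$.

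The third step is uniqueness of $\blambda_h$, and this is the step I expect to be the main obstacle, since $J$ is not strictly convex in the multiplier direction and uniqueness of the primal minimiser does not by itself pin down $\blambda_h$. Here the discrete inf-sup condition of Theorem~\ref{thm:inf-sup} is essential: if $\blambda_h^1,\blambda_h^2\in\cC_{f,h}$ both satisfy \eqref{eq:scheme.displacement} with the same $\uvec{u}_h$, then subtracting gives $\int_\Gamma(\blambda_h^1-\blambda_h^2)\cdot\sjump{\uvec{v}_h}_h=0$ for all $\uvec{v}_h\in\ddgs$, whence \eqref{eq:def.inf-sup} forces $\|\blambda_h^1-\blambda_h^2\|_{-1/2,\Gamma}=0$; since $\|\cdot\|_{-1/2,\Gamma}$ is a genuine norm on $L^2(\Gamma)^3$ (Definition~\ref{def:H^{-1/2}}) and $\blambda_h^1-\blambda_h^2\in\bo M_h\subset L^2(\Gamma)^3$, we conclude $\blambda_h^1=\blambda_h^2$. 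One subtlety to check carefully is that the inf-sup supremum over $\ddgs$ with the discrete jump $\sjump{\cdot}_h$ does control the continuous-type $\|\cdot\|_{-1/2,\Gamma}$ norm on all of $\bo M_h$ (not merely on some subspace), which is precisely the content of Theorem~\ref{thm:inf-sup}; granting that, uniqueness follows. Finally, I would remark that the whole argument is purely finite-dimensional, so no compactness beyond Heine--Borel is needed, and the constants in coercivity are $\nu$-independent by Theorem~\ref{thm:korn}, consistent with the robustness claims made elsewhere in the paper.
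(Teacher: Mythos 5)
Your proposal is correct and follows essentially the same route as the paper, which simply cites the abstract saddle-point framework of Haslinger et al.\ for existence, the discrete Korn inequality (Theorem~\ref{thm:korn}) for uniqueness of $\uvec{u}_h$, and the inf-sup condition (Theorem~\ref{thm:inf-sup}) for uniqueness of $\blambda_h$; your convex-duality reformulation is just that framework spelled out in this finite-dimensional setting. The only point worth tightening is the coercivity/positive-definiteness of $\|\cdot\|_{\mathrm{en},h}$ on $\ddgs$, which requires noting that $\|\cdot\|_{1,h}$ is a genuine norm (not merely a seminorm) thanks to the boundary DOFs and the assumption that $\Gamma$ does not disconnect $\Omega$ from $\partial\Omega$.
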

\begin{proof}
The abstract framework of \cite[Theorem 3.8]{HASLINGER1996313} on saddle point problems gives the existence of a solution. The uniqueness of $\uvec{u}_h$ derives from the discrete Korn inequality \eqref{eq:dis_korn} and the uniqueness of $\blambda_h$ from the discrete inf-sup property \eqref{eq:def.inf-sup}, as demonstrated in \cite[Proposition 4.1]{jhr}.
\end{proof}

\section{Main results}\label{main_results}
We denote by $C^0_0(\overline{\O}\setminus\Gamma)$ the set of continuous functions $\overline{\O}\setminus\Gamma\to\R$ that vanish on $\partial\O$.
The interpolator $\mathbb I_h:C^0_0(\overline{\O}\setminus\Gamma)^3\rightarrow \dgs$ is defined as follows: for all $\bo{v}\in C^0_0(\overline{\O}\setminus\Gamma)^3$,
\begin{align*}
\mathbb I_h\bo{v}\coloneqq \big((\I_{\Ks}\bo{v})_{K\in\cM,~s\in \mathcal{V}_K},(\I_{\Ke}\bo{v})_{K\in\cM,~e\in \mathcal{E}_K},(\I_{\cK\sigma}\bo{v})_{K\in\cM,~\sigma\in\cF_K},(\I_K\bo{v})_{K\in\cM}\big),
\end{align*}
where
\begin{subequations}\label{eq:DOF.Ih}
\begin{alignat}{2}
\I_{\Ks}\bo{v}&=\bo{v}|_K(\bx_s)&&\qquad\forall s\in\cV_K,~K\in\cM,\notag\\
\I_{\cK e}\bo{v}&=\Pi^0_e(\bo{v}|_K)&&\qquad\forall e\in\cE_K,~K\in\cM,\notag\\
\I_{\cK\sigma}\bo{v}&=\Pi^0_\sigma(\bo{v}|_K)&&\qquad\forall \sigma\in\cF_K,~K\in\cM,\label{eq:def.Ih.sigma}\\
\I_{K}\bo{v}&=\Pi^0_K\bo{v}+\widetilde{\bo{v}}_K&&\qquad\forall K\in\cM,\label{eq:def.Ih.K}
\end{alignat}
\end{subequations}
and $\widetilde{\bo{v}}_K\in\mathbb R^3$ is defined by
\begin{equation}\label{eq:tildevK}
\int_K \widetilde{\bo{v}}_K\cdot\nabla \phi=-\sum_{\sigma\in\cF_K}\int_\sigma (\bo{v}|_K-\up{\Ksig}\mathbb I_h\bo{v})\cdot \phi\bo{n}_{\Ksig}
\quad\forall \phi\in \P^1(K).
\end{equation}

\begin{remark}[Definition of the interpolator]\label{rem:def.interpolator}
We note that the definition \eqref{eq:tildevK} is not self-referencing (and therefore makes sense): even though the interpolate $\mathbb I_h\bo{v}$ appears in the right-hand side, this is through the reconstruction $\up{\Ksig}$ which only relies on the -- already defined -- values of the interpolate on vertices, edges, and faces, not on $K$ (for which $\widetilde{\bo{v}}_K$ must be defined). Moreover, \eqref{eq:tildevK} properly fixes a unique vector in $\mathbb R^3$ because $\nabla:P^1(K)/\mathbb R\to \mathbb R^3$ is an isomorphism and, if $\phi$ is constant, the right-hand side vanishes by \eqref{eq:Pisigma.Pot} below.

In the definition \eqref{eq:def.Ih.K} of the cell component of $\I_h\bo{v}$, the correction $\widetilde{\bo{v}}_K$ to the $L^2$-projection $\Pi^0_K(\bo{v}|_K)$ is required to ensure that $\mathbb I_h$ is a Fortin interpolator for the divergence, see Lemma \ref{lem:fortin}.
We note however that, if $\bo{v}\in\mathbb P^2(K)^3$, then $\up{\Ksig}\mathbb I_h\bo{v}=(\bo{v}|_K)|_\sigma$, so $\widetilde{\bo{v}}_K=0$. This correction is therefore a higher-than-quadratic correction to the interpolator.
\end{remark}

\begin{lemma}[Properties of the reconstruction operators] \label{lem:prop.reconstructions}
The edge, face, and element gradients, and reconstruction operators satisfy the following properties:
\begin{alignat}{2}
\Pi^0_\sigma\Upsilon_{\Ksig}^2(\uvec{v}_h)&=\bo{v}_{\Ksig}&&\qquad\forall \sigma\in\cF\,,\;\forall \uvec{v}_h\in \dgs,\label{eq:Pisigma.Pot}\\
\Pi^0_K\Upsilon_{K}^2(\uvec{v}_h)&=\bo{v}_K&&\qquad\forall K\in\cM\,,\;\forall \uvec{v}_h\in \dgs,\label{eq:PiK.Pot}\\
\mathbb G_{K}^1\mathbb I_h\bo{q}&=\nabla\bo{q}&&\qquad \forall K\in\cM\,,\;\forall \bo{q}\in \mathbb P^2(K)^3,\label{eq:Grad.interpolate}\\
\Upsilon_{K}^2\mathbb I_h\bo{q}&=\bo{q}&&\qquad\forall K\in\cM\,,\;\forall \bo{q}\in \mathbb P^2(K)^3,\label{eq:Upsilon.interpolate}\\
\up{Ke}\mathbb I_h\bo{q}&=\bo{q} &&\qquad \forall K\in\cM\,,\;\forall e\in\cE_K,\;\forall \bo{q}\in \mathbb P^2(e)^3,\label{eq:Upsilon.sigma.interpolate}\\
\up{K\sigma}\mathbb I_h \bo{q}&=\bo{q}&&\qquad \forall K\in\cM\,,\; \forall \sigma\in\cF_K,\;\forall \bo{q}\in \mathbb P^2(\sigma)^3,\notag
\end{alignat}
where, by abuse of notation, the interpolator $\mathbb I_h$ applied to functions only defined on a mesh entity (cell, face, or edge) corresponds to
the restriction of the degrees of freedom \eqref{eq:DOF.Ih} belonging to that mesh entity (notice that the gradient or displacement reconstructions on the corresponding mesh entity only depend on these degrees of freedom).
\end{lemma}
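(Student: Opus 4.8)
The plan is to verify each identity by unwinding the definitions of the reconstruction operators and exploiting the fact that each is characterised by testing against a suitable polynomial space. For \eqref{eq:Pisigma.Pot}, I would test the defining relation of $\Up^2_{K\sigma}$ against $\bos{\eta}\in\mathbb Z(\sigma)^3$; however that only pins down the part of $\Up^2_{K\sigma}\uvec v_h$ orthogonal to $\mathbb P^0(\sigma)^3$ (since $\DIV\bos\eta$ ranges over a complement of constants), so the constant part must be fixed separately. The cleanest route is to recall that in the DDR construction $\Up^2_{K\sigma}$ is \emph{defined} so that $\Pi^0_\sigma\Up^2_{K\sigma}\uvec v_h=\bo v_{\Ksig}$ (this is the reconstruction's normalisation, mirroring the edge condition $\Pi^0_e\Up^2_{Ke}\uvec v_h=\bo v_{Ke}$ that appears explicitly in Section~\ref{subsec reconstruct}); I would simply state this and, if needed, re-derive it from the full set of defining equations of $\Up^2_{K\sigma}$ including the normalisation. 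The same argument gives \eqref{eq:PiK.Pot} for the element reconstruction, using $\mathbb Z(K)^3$ and the normalisation $\Pi^0_K\Up^2_K=\bo v_K$.

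For the commutation properties \eqref{eq:Grad.interpolate}–\eqref{eq:Upsilon.interpolate}, the key observation is polynomial consistency: the reconstructions are built from integration-by-parts formulas that are \emph{exact} when the underlying field is a polynomial of the appropriate degree. Concretely, for $\bo q\in\mathbb P^2(K)^3$ I would first establish the lower-dimensional identities \eqref{eq:Upsilon.sigma.interpolate} and the face version $\up{K\sigma}\mathbb I_h\bo q=\bo q$: on an edge, $\Up^2_{Ke}\mathbb I_h\bo q$ and $\bo q|_e$ are both in $\mathbb P^2(e)^3$, agree at the two vertices of $e$ (by the definition of $\I_{\Ks}$), and have the same $L^2$-average on $e$ (by the definition of $\I_{Ke}=\Pi^0_e(\bo q|_e)$ and the normalisation of $\Up^2_{Ke}$); since the two vertex values plus the mean determine an element of $\mathbb P^2(e)^3$, they coincide. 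On a face, I feed $\up{Ke}\mathbb I_h\bo q=\bo q|_e$ and $\I_{\cK\sigma}\bo q=\Pi^0_\sigma(\bo q|_\sigma)$ into the defining relations for $\G^1_{K\sigma}$ and $\Up^2_{K\sigma}$, integrate by parts the exact polynomial identities for $\bo q|_\sigma\in\mathbb P^2(\sigma)^3$, and conclude $\G^1_{K\sigma}\mathbb I_h\bo q=\nabla_\sigma(\bo q|_\sigma)$ and then $\up{K\sigma}\mathbb I_h\bo q=\bo q|_\sigma$ (again the $\mathbb Z(\sigma)^3$-test fixes everything but the constant, which is handled by the normalisation together with $\Pi^0_\sigma\bo q|_\sigma=\I_{\cK\sigma}\bo q$ — \emph{this} is precisely where $\widetilde{\bo q}_K=0$, as noted in Remark~\ref{rem:def.interpolator}: for $\bo q\in\mathbb P^2$ the cell correction vanishes and $\I_{\cK\sigma}\bo q=\Pi^0_\sigma(\bo q|_K)$). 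Then I repeat the same step one dimension up: plug $\up{K\sigma}\mathbb I_h\bo q=\bo q|_\sigma$ and $\I_K\bo q=\Pi^0_K\bo q$ into \eqref{eq:G1K} and the element displacement equation, use the exact integration-by-parts identity for $\bo q\in\mathbb P^2(K)^3$, and read off \eqref{eq:Grad.interpolate} and \eqref{eq:Upsilon.interpolate}.

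The only genuinely delicate point — and the main obstacle — is the bookkeeping of the \emph{constant-mode normalisation} at each level: the $\mathbb Z(\cdot)$-tests in the definitions of $\Up^2$ only control the non-constant part, so at every dimension one must separately match $\Pi^0$ of the reconstruction against the corresponding degree of freedom, and for the element this requires knowing $\widetilde{\bo q}_K=0$ when $\bo q$ is quadratic. Once that is in place, everything else is a mechanical chain of ``test against the right polynomial space, integrate by parts, invoke the exactness of the formula on polynomials.'' I would therefore organise the proof as: (i) $\Pi^0$-normalisations \eqref{eq:Pisigma.Pot}–\eqref{eq:PiK.Pot} (immediate from the DDR definitions); (ii) edge consistency \eqref{eq:Upsilon.sigma.interpolate}; (iii) face consistency for $\G^1_{K\sigma}$ and $\up{K\sigma}$; (iv) element consistency \eqref{eq:Grad.interpolate}–\eqref{eq:Upsilon.interpolate}, each step using the output of the previous one and the fact that $\mathbb I_h$ reproduces traces/projections of quadratics exactly.
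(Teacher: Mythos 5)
Your overall architecture (edge $\to$ face $\to$ element, polynomial consistency of the integration-by-parts formulas, and the observation that $\widetilde{\bo q}_K=0$ for quadratics must be established \emph{after} the face consistency and \emph{before} the element consistency) is sound, and it is more detailed than the paper, which simply cites \cite[Lemma~3]{pjddr} together with Remark~\ref{rem:def.interpolator}. However, the one point you single out as ``the main obstacle'' — the constant-mode normalisation — is exactly where your argument goes wrong. In the DDR construction used here there is \emph{no} separate normalisation equation for $\Up{K\sigma}$ or $\Up{K}$: the defining relation tested against $\bos\eta\in\mathbb Z(\sigma)^3=[(\bx-\overline{\bx}_\sigma)\mathbb P^2(\sigma)]^3$ determines the reconstruction \emph{completely}, because $q\mapsto\DIV\big((\bx-\overline{\bx}_\sigma)q\big)=(d+\ell)q$ on homogeneous components of degree $\ell$ (Euler's identity, $d=2$ on a face, $d=3$ in a cell) is an isomorphism of $\mathbb P^2$ onto itself — its range is \emph{all} of $\mathbb P^2(\sigma)$, constants included, not a complement of constants. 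Consequently, declaring ``$\Pi^0_\sigma\Up{K\sigma}\uvec v_h=\bo v_{\cK\sigma}$ is the reconstruction's normalisation, I simply state it'' makes your proof of \eqref{eq:Pisigma.Pot} circular: that identity is a \emph{derived} property, not part of the definition. (You appear to be importing the HHO convention, where the potential is defined through $\nabla$-tests plus a closure condition on the mean; DDR avoids the closure condition precisely by testing against divergences of the Raviart--Thomas-type complement.)

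The correct derivation of \eqref{eq:Pisigma.Pot} is short: take $\bos\eta$ whose rows are $(\bx-\overline{\bx}_\sigma)c_i$ with $c_i$ constant; then $\bos\eta\in\mathbb P^1(\sigma)^{3\times 3}$ is also an admissible test function in the definition of $\G^1_{K\sigma}$, so subtracting the two defining relations cancels the $\G^1_{K\sigma}$ and edge terms and leaves $\int_\sigma\big(\Up{K\sigma}\uvec v_h-\bo v_{\cK\sigma}\big)\cdot\DIV\bos\eta=0$ with $\DIV\bos\eta$ an arbitrary constant vector; the same argument with $\mathbb Z(K)^3$ and \eqref{eq:G1K} gives \eqref{eq:PiK.Pot}. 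Once this is in place, the rest of your plan goes through essentially as written (and in fact becomes simpler: at the face and element levels the $\mathbb Z$-test already fixes the whole of $\Up{K\sigma}\mI{q}$ and $\Up{K}\mI{q}$, so no separate matching of constant modes is needed there). One last small slip: $\I_{\cK\sigma}\bo q=\Pi^0_\sigma(\bo q|_K)$ holds by definition \eqref{eq:def.Ih.sigma} for any $\bo v$, independently of $\widetilde{\bo q}_K$; the vanishing of $\widetilde{\bo q}_K$ only enters through the cell value $\I_K\bo q=\Pi^0_K\bo q$ used in \eqref{eq:Grad.interpolate}--\eqref{eq:Upsilon.interpolate}.
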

\begin{proof}
We refer to \cite[Lemma 3]{pjddr} for the proof of this lemma, using Remark \ref{rem:def.interpolator} for \eqref{eq:Grad.interpolate} and \eqref{eq:Upsilon.interpolate}.
\end{proof}

Following the approach of the Gradient Discretisation Method \cite{gdm}, the abstract error estimate will be stated in terms of two quantities measuring the (primal) consistency and the limit-conformity (also known as adjoint consistency) of the discrete space and operators.
\begin{itemize}
\item The primal consistency error is: for $\uvec{v}_h\in\ddgs,$
\begin{align}\label{eq:primal.consist}
\bos{\fC}_h(\bo{u},\uvec{v}_h)\coloneqq \left(\lVert \nabla \bo{u}-\dgr{h}\uvec{v}_h\rVert_{L^2(\O\setminus\overline{\Gamma})}^2+\mathbb S_h(\uvec{v}_h,\uvec{v}_h)\right)^{1/2}.
\end{align}
\item Letting
\begin{align*}
\bo{\Sigma}\coloneqq\{\bos{\chi}\in H_{\DIV}(\O\setminus\overline{\Gamma};\mathcal{S}^3(\mathbb R)),\colon \gamma^+_{\bo{n}}\bos{\chi}+\gamma^-_{\bo{n}}\bos{\chi}=\bo{0},~\gamma^+_{\bo{n}}\bos{\chi}\in L^2(\Gamma)\},
\end{align*}
(where $\mathcal S^3(\mathbb R)$ is the space of symmetric $3\times 3$ matrices with real coefficients), the limit-conformity measure is defined as, for $\bos{\chi}\in\bo{\Sigma}:$
\begin{equation}\label{eq:adjoint.consist}
  \begin{aligned}
&  \bos{\cW}_h(\bos{\chi})\coloneqq{} \sup\limits_{\uvec{v}_h\in\ddgs}\frac{w_h(\bos{\chi},\uvec{v}_h)}{\lVert\uvec{v}_h\rVert_{1,h}}, \\
\text{ where }  w_h(\bos{\chi},\uvec{v}_h)\coloneqq{}& -\int_\O \bos{\chi}\colon \be_h(\uvec{v}_h)+\int_\Gamma \Pi^0_{\cF_\Gamma}(\gamma^+_{\bo{n}}\bos{\chi})\cdot\sjump{\uvec{v}_h}_h-\sum_{K\in\cM}\int_{K}\bo{v}_K\cdot\DIV\bos{\chi}.
  \end{aligned}
\end{equation}
\end{itemize}
The term ``adjoint consistency'' refers to the fact that $\bos{\cW}_h(\bos{\chi})$ measures a defect of integration-by-parts formula between the  discrete strain and the continuous divergence; put another way, it measures how well some formal adjoint of the discrete strain approximates the divergence.

\begin{theorem}[Abstract error estimate]\label{thm:abstract.error.estimate}
Let $(\bo{u},\blambda)$ be the solution to \eqref{eq:model} and assume that $\bo{u}\in H^1_0(\O\setminus\overline{\Gamma})^3\cap H^{\frac{3}{2}+\fr}(\cM)^3$ for some $0<\fr\leq 1$. Then, the solution $(\uvec{u}_h,\blambda_h)$ of \eqref{eq:scheme} satisfies the following error estimate:
\begin{align}
  \lVert \mI{u}-\uvec{u}_h\rVert_{\mathrm{en},h}\lesssim{}& \frac{1}{\sqrt{\mu_{1}}} \bos{\cW}_h(\bsig(\bo{u}))+ \sqrt{\mu_{2}}\bos{\fC}_h(\bo{u},\mI{u})\notag\\
  &+\max\Bigg[0;\int_{\Gamma}(\blambda-\Pi^0_{{\cF}_\Gamma}\blambda)\cdot\big( \sjump{\bo{u}}-\Pi^0_{\cF_\Gamma}\sjump{\bo{u}}\big)\Bigg]^{1/2},\label{eq:abstract.error.estimate}\\
 \lVert \blambda-\blambda_h\rVert_{-1/2,\Gamma} \lesssim{}& \bos{\cW}_h(\bsig(\bo{u}))+\mu_{2}\bos{\fC}_h(\bo{u},\mI{u})+\lVert\blambda-\Pi^0_{\cF_\Gamma}\blambda\rVert_{-1/2,\Gamma}\notag{}\\&\qquad +\sqrt{\mu_{2}}\left(1+\left(\frac{\nu}{1-2\nu}\right)^{\frac{1}{2}}\right)\lVert\uvec{u}_h-\mI{u}\rVert_{\mathrm{en},h}.
 \label{eq:abstract.error.estimate_lam}
 \end{align}
\end{theorem}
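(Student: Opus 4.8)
The plan is to follow the Gradient-Discretisation-type skeleton, splitting each estimate, with the genuinely new work concentrated in the two variational inequalities and in the $\nu$-robustness. As preliminaries, note that under the regularity hypothesis $\bo u$ is continuous on $\overline\O\setminus\Gamma$ and vanishes on $\partial\O$, so $\mI{u}\in\ddgs$, and $\bsig(\bo u)\in\bo\Sigma$ with $\gamma^+_{\bo n}\bsig(\bo u)=-\blambda\in L^2(\Gamma)^3$, which makes $w_h(\bsig(\bo u),\cdot)$, $\bos{\cW}_h(\bsig(\bo u))$ and $\Pi^0_{\cF_\Gamma}\blambda$ meaningful. Set $\uvec{e}_h\coloneqq\uvec{u}_h-\mI{u}$. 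Using \eqref{eq:scheme.displacement}, then $\bo f=-\DIV\bsig(\bo u)$ and the definition \eqref{eq:adjoint.consist} of $w_h$ (no integration by parts is needed, since the cell term $\sum_K\int_K\bo v_K\cdot\DIV\bsig(\bo u)$ is already built into $w_h$), one obtains, for all $\uvec{v}_h\in\ddgs$,
\[
\langle\uvec{u}_h,\uvec{v}_h\rangle_{\mathrm{en},h}=w_h(\bsig(\bo u),\uvec{v}_h)+\int_\O\bsig(\bo u)\colon\be_h(\uvec{v}_h)+\int_\Gamma(\Pi^0_{\cF_\Gamma}\blambda-\blambda_h)\cdot\sjump{\uvec{v}_h}_h .
\]
Subtracting $\langle\mI{u},\uvec{v}_h\rangle_{\mathrm{en},h}=\int_\O\bsig_h(\mI{u})\colon\be_h(\uvec{v}_h)+\mu_1\mathbb S_h(\mI{u},\uvec{v}_h)$ yields the error equation for $\langle\uvec{e}_h,\uvec{v}_h\rangle_{\mathrm{en},h}$. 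The decisive observation is that in $\int_\O(\bsig(\bo u)-\bsig_h(\mI{u}))\colon\be_h(\uvec{v}_h)$ the part carrying the factor $\tfrac{\nu}{1-2\nu}$ equals $\tfrac{E}{1+\nu}\tfrac{\nu}{1-2\nu}\sum_K\int_K(\DIV\bo u-\DIV_h\mI{u})\,\DIV_h\uvec{v}_h$, and this vanishes identically since $\DIV_h\uvec{v}_h|_K\in\mathbb P^1(K)$ while the commutation (Fortin) property of $\mI{}$ with the divergences (Lemma \ref{lem:fortin}) gives $\int_K(\DIV\bo u-\DIV_h\mI{u})\,q=0$ for all $q\in\mathbb P^1(K)$. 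What remains is the symmetric-gradient part $\tfrac{E}{1+\nu}\int_\O(\be(\bo u)-\be_h(\mI{u}))\colon\be_h(\uvec{v}_h)$, bounded by $\sqrt{\mu_2}\,\bos{\fC}_h(\bo u,\mI{u})\lVert\uvec{v}_h\rVert_{\mathrm{en},h}$ via an energy-weighted Cauchy--Schwarz (using $\tfrac{E}{1+\nu}\le\mu_2$ and $\lVert\be(\bo u)-\be_h(\mI{u})\rVert_{L^2}\le\bos{\fC}_h(\bo u,\mI{u})$), the stabilisation term $\mu_1\mathbb S_h(\mI{u},\uvec{v}_h)\le\sqrt{\mu_1}\,\bos{\fC}_h(\bo u,\mI{u})\lVert\uvec{v}_h\rVert_{\mathrm{en},h}$, and $|w_h(\bsig(\bo u),\uvec{v}_h)|\le\bos{\cW}_h(\bsig(\bo u))\lVert\uvec{v}_h\rVert_{1,h}\lesssim\mu_1^{-1/2}\bos{\cW}_h(\bsig(\bo u))\lVert\uvec{v}_h\rVert_{\mathrm{en},h}$ by \eqref{eq:norm_energy}.

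It remains to treat the contact term $\int_\Gamma(\Pi^0_{\cF_\Gamma}\blambda-\blambda_h)\cdot\sjump{\uvec{e}_h}_h$. First, $\Pi^0_{\cF_\Gamma}\blambda\in\cC_{f,h}$: $\blambda$ lies pointwise in the dual cone ($\blambda_{\bo n}=-T_{\bo n}(\bo u)\ge0$, $|\blambda_\tau|=|\bo T_\tau(\bo u)|\le g$), and since $g$ is piecewise constant on $\cF_\Gamma$, the face-wise $L^2$-average preserves both constraints. Testing \eqref{eq:scheme.lagrange} with $\bmu_h=\Pi^0_{\cF_\Gamma}\blambda$ gives $\int_\Gamma(\Pi^0_{\cF_\Gamma}\blambda-\blambda_h)\cdot\sjump{\uvec{u}_h}_h\le0$, hence the term is $\le-\int_\Gamma(\Pi^0_{\cF_\Gamma}\blambda-\blambda_h)\cdot\sjump{\mI{u}}_h$. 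Using that $\Pi^0_{\cF_\Gamma}\blambda-\blambda_h$ is face-wise constant, that $\Pi^0_\sigma\sjump{\mI{u}}_\sigma=\Pi^0_\sigma(\gamma^+\bo u-\gamma^-\bo u)=\Pi^0_\sigma\sjump{\bo u}$ (by \eqref{eq:Pisigma.Pot} and the definition \eqref{eq:def.Ih.sigma} of the face DOFs), then splitting off $\int_\Gamma(\blambda_h-\blambda)\cdot\sjump{\bo u}\le0$ (continuous inequality \eqref{eq:model.contact} with $\bmu=\blambda_h\in\cC_{f,h}\subset\cC_f$) and exploiting the $L^2$-projection property of $\Pi^0_{\cF_\Gamma}$, this reduces to $\int_\Gamma(\blambda-\Pi^0_{\cF_\Gamma}\blambda)\cdot(\sjump{\bo u}-\Pi^0_{\cF_\Gamma}\sjump{\bo u})$. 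Taking $\uvec{v}_h=\uvec{e}_h$ in the error equation and collecting the bounds gives $\lVert\uvec{e}_h\rVert_{\mathrm{en},h}^2\le\alpha\lVert\uvec{e}_h\rVert_{\mathrm{en},h}+R$ with $\alpha\lesssim\mu_1^{-1/2}\bos{\cW}_h(\bsig(\bo u))+\sqrt{\mu_2}\bos{\fC}_h(\bo u,\mI{u})$ and $R$ the oscillation quantity above; the elementary implication ($X^2\le\alpha X+\beta$ with $X\ge0$, $\beta=\max[0;R]\ge0$, forces $X\le\alpha+\sqrt{\beta}$) then delivers \eqref{eq:abstract.error.estimate}.

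For the multiplier estimate, write $\blambda-\blambda_h=(\blambda-\Pi^0_{\cF_\Gamma}\blambda)+(\Pi^0_{\cF_\Gamma}\blambda-\blambda_h)$; the first summand contributes $\lVert\blambda-\Pi^0_{\cF_\Gamma}\blambda\rVert_{-1/2,\Gamma}$ by the triangle inequality, and to the second, which lies in $\bo M_h$, I apply the discrete inf-sup condition \eqref{eq:def.inf-sup}. Rearranging the identity of the first paragraph, for arbitrary $\uvec{v}_h\in\ddgs$,
\[
\int_\Gamma(\Pi^0_{\cF_\Gamma}\blambda-\blambda_h)\cdot\sjump{\uvec{v}_h}_h=-w_h(\bsig(\bo u),\uvec{v}_h)+\int_\O(\bsig_h(\mI{u})-\bsig(\bo u))\colon\be_h(\uvec{v}_h)+\mu_1\mathbb S_h(\mI{u},\uvec{v}_h)+\langle\uvec{e}_h,\uvec{v}_h\rangle_{\mathrm{en},h},
\]
no variational inequality being needed here since $\uvec{v}_h$ already runs over all test functions of \eqref{eq:scheme.displacement}. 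Dividing by $\lVert\uvec{v}_h\rVert_{1,h}$, the first three terms are controlled by $\bos{\cW}_h(\bsig(\bo u))+\mu_2\bos{\fC}_h(\bo u,\mI{u})$ (again cancelling the $\tfrac{\nu}{1-2\nu}$ part via the Fortin property, then using $\tfrac{E}{1+\nu}\le\mu_2$ and $\mu_1\le\mu_2$), while $\langle\uvec{e}_h,\uvec{v}_h\rangle_{\mathrm{en},h}\le\lVert\uvec{e}_h\rVert_{\mathrm{en},h}\lVert\uvec{v}_h\rVert_{\mathrm{en},h}\lesssim\sqrt{\mu_2}\big(1+(\tfrac{\nu}{1-2\nu})^{1/2}\big)\lVert\uvec{e}_h\rVert_{\mathrm{en},h}\lVert\uvec{v}_h\rVert_{1,h}$, the norm comparison coming from $|\DIV_h\uvec{v}_h|=|\tr\be_h(\uvec{v}_h)|\le\sqrt3\,|\be_h(\uvec{v}_h)|$ together with $\mu_1\le\mu_2$; this is precisely where the $\nu$-dependent factor of \eqref{eq:abstract.error.estimate_lam} enters, consistently with the fact that \eqref{eq:abstract.error.estimate} is free of it. Collecting these bounds and adding back $\lVert\blambda-\Pi^0_{\cF_\Gamma}\blambda\rVert_{-1/2,\Gamma}$ yields \eqref{eq:abstract.error.estimate_lam}.

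The main obstacle is the bookkeeping of the contact terms in the displacement estimate: one must combine, in exactly the right order, the discrete inequality \eqref{eq:scheme.lagrange} tested with $\Pi^0_{\cF_\Gamma}\blambda$ (which needs $\Pi^0_{\cF_\Gamma}\blambda\in\cC_{f,h}$, hence the hypothesis that $g$ is piecewise constant on $\cF_\Gamma$), the continuous inequality \eqref{eq:model.contact} tested with $\blambda_h$, the piecewise-constant structure of $\bo M_h$, and the commutation $\Pi^0_\sigma\sjump{\mI{u}}_\sigma=\Pi^0_\sigma\sjump{\bo u}$, so that only the oscillation quantity $\int_\Gamma(\blambda-\Pi^0_{\cF_\Gamma}\blambda)\cdot(\sjump{\bo u}-\Pi^0_{\cF_\Gamma}\sjump{\bo u})$ survives; the remaining ingredients (the Fortin cancellation that makes the displacement bound $\nu$-robust, the Cauchy--Schwarz bounds, and the inf-sup step) are then routine given the results already established.
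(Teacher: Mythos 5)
Your proposal is correct and follows essentially the same route as the paper: the same error equation built from the scheme and $w_h$, the same Fortin cancellation of the $\tfrac{\nu}{1-2\nu}$ contribution via Lemma \ref{lem:fortin}, the same pair of variational-inequality tests ($\bmu_h=\Pi^0_{\cF_\Gamma}\blambda$ discretely, $\bmu=\blambda_h$ continuously) reducing the contact term to the oscillation quantity, and the same inf-sup argument for the multiplier. The only differences are cosmetic — you close the displacement estimate with an energy-weighted Cauchy--Schwarz and the quadratic inequality $X^2\le\alpha X+\beta\Rightarrow X\le\alpha+\sqrt{\beta}$ where the paper uses Young's inequality and absorption, and you obtain the $\nu$-dependent factor by comparing $\lVert\uvec{v}_h\rVert_{\mathrm{en},h}$ with $\lVert\uvec{v}_h\rVert_{1,h}$ rather than via the paper's bound on the discrete stress.
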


\begin{remark}[Locking-free estimate]
Recalling that the hidden multiplicative constant in $\lesssim$ does not depend on the Poisson ratio, we notice that the error bound \eqref{eq:abstract.error.estimate} on the matrix displacement does not contain any constant that blows up in the incompressible limit $\nu\to \frac12$, whereas the term $\frac{1}{\sqrt{1-2\nu}}$ in the right-hand side of the estimate \eqref{eq:abstract.error.estimate_lam} on the Lagrange multiplier does blow up in this limit; this is not unexpected since this Lagrange multiplier plays the role of a traction, whose definition explicitly involve such a term.
\end{remark}

\begin{theorem}[Error estimate]\label{thm:error.estimate}
Let $(\bo{u},\blambda)$ be the solution to \eqref{eq:model}. Consider real parameters $\fr$ and $s_{\fr}$ satisfying
\begin{itemize}
\item If $g=0$ (no friction): $0\le \fr<1$, $s_{\fr}=\fr$, and $t_{\fr} =1+\fr$. 

\item If $g\not=0$ (friction): $0\le\fr\le 1/2$, $s_{\fr}=2\fr$, and $t_{\fr}=1$.
\end{itemize}
Assume that $\bo{u}\in H^1_0(\O\setminus\overline{\Gamma})^3\cap H^{\frac{3}{2}+\fr}(\cM)^3$ and $\blambda\in H^{s_r}(\Gamma)^3$. Then, the discrete solution $(\uvec{u}_h,\blambda_h)$ of \eqref{eq:scheme} satisfies the following error estimates:
\begin{align}
\lVert\uvec{u}_h-\mI{u}{}\rVert_{\mathrm{en},h}\lesssim{}& h^{\frac{1}{2}+\fr}\bigg[{}\frac{1}{\sqrt{\mu_{1}}}\bigg(\lvert\bos{\sigma}(\bo{u})\rvert_{H^{\frac{1}{2}+\fr}(\cM)}+\lvert \blambda\rvert_{H^{\fr}(\Gamma)}\bigg)+\sqrt{\mu_{2}}\lvert\bo{u}\rvert_{H^{\frac{3}{2}+\fr}(\cM)}\notag\\
&\qquad\qquad+\lvert \blambda\rvert_{H^{s_\fr}(\Gamma)}+\lvert \sjump{\bo{u}}\rvert_{H^{t_{\fr}}(\Gamma)}\bigg],\label{eq:total.error}\\
\lVert \blambda-\blambda_h\rVert_{-1/2,\Gamma}\lesssim{}&h^{\frac{1}{2}+\fr} \left(\lvert\bos{\sigma}(\bo{u})\rvert_{H^{\frac{1}{2}+\fr}(\cM)}+\mu_2\lvert\bo{u}\rvert_{H^{\frac{3}{2}+\fr}(\cM)}+\lvert \blambda\rvert_{H^{\fr}(\Gamma)}\right){} \notag\\
&+\sqrt{\mu_{2}}\left(1+\left(\frac{\nu}{1-2\nu}\right)^{\frac{1}{2}}\right)\lVert\uvec{u}_h-\mI{u}\rVert_{\mathrm{en},h},\label{eq:total.error_lm}
\end{align}
where we recall that the hidden constants in $\lesssim$ do not depend on the Young modulus $E$ or the Poisson ratio $\nu$.
\end{theorem}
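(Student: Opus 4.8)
The plan is to derive the estimates \eqref{eq:total.error}--\eqref{eq:total.error_lm} from the abstract estimates \eqref{eq:abstract.error.estimate}--\eqref{eq:abstract.error.estimate_lam} of Theorem~\ref{thm:abstract.error.estimate} by bounding, term by term, the four quantities occurring on their right-hand sides: the primal-consistency error $\bos{\fC}_h(\bo{u},\mI{u})$, the limit-conformity measure $\bos{\cW}_h(\bsig(\bo{u}))$, the fracture cross term $\int_{\Gamma}(\blambda-\Pi^0_{\cF_\Gamma}\blambda)\cdot(\sjump{\bo{u}}-\Pi^0_{\cF_\Gamma}\sjump{\bo{u}})$, and $\lVert\blambda-\Pi^0_{\cF_\Gamma}\blambda\rVert_{-1/2,\Gamma}$. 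Once these bounds are in hand, \eqref{eq:total.error} follows from \eqref{eq:abstract.error.estimate} upon applying $\sqrt{ab}\le\tfrac12(a+b)$ to the square-rooted cross term, and \eqref{eq:total.error_lm} follows from \eqref{eq:abstract.error.estimate_lam} using in addition the estimate of $\lVert\uvec{u}_h-\mI{u}\rVert_{\mathrm{en},h}$ just obtained; throughout, over-consistent powers of $h$ are absorbed via $h\lesssim 1$, and the factor $(\nu/(1-2\nu))^{1/2}$ is simply carried over from \eqref{eq:abstract.error.estimate_lam}.

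I would first estimate the primal-consistency error. Since $\bos{\fC}_h(\bo{u},\mI{u})^2=\lVert\nabla\bo{u}-\dgr{h}\mI{u}\rVert_{L^2(\O\setminus\overline{\Gamma})}^2+\mathbb S_h(\mI{u},\mI{u})$, I would work cell by cell: on each $K$, the polynomial consistency of the reconstructions (Lemma~\ref{lem:prop.reconstructions}, in particular \eqref{eq:Grad.interpolate} and \eqref{eq:Upsilon.interpolate}) allows one to subtract the $L^2(K)^3$-projection of $\bo{u}$ onto $\mathbb P^2(K)^3$, leaving only approximation residuals of $\bo{u}$ and of its traces on the faces, edges and vertices of $K$; the cell correction $\widetilde{\bo{v}}_K$ built into $\mathbb I_h$ vanishes on $\mathbb P^2(K)^3$ (Remark~\ref{rem:def.interpolator}) and therefore contributes only a higher-than-quadratic perturbation, controlled through the face residuals $\bo{u}|_K-\up{\Ksig}\mI{u}$. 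Combining this with $L^2$-boundedness of $\dgr{K},\up{K},\up{K\sigma},\up{Ke}$ and with Bramble--Hilbert and continuous trace estimates on $H^{\frac32+\fr}(K)^3$, one gets $\bos{\fC}_h(\bo{u},\mI{u})\lesssim h^{\frac12+\fr}\lvert\bo{u}\rvert_{H^{\frac32+\fr}(\cM)}$ (and, by the same arguments, the $L^2$-approximation bound $\lVert\bo{u}-\up{h}\mI{u}\rVert_{L^2(\cM)}\lesssim h^{\frac32+\fr}\lvert\bo{u}\rvert_{H^{\frac32+\fr}(\cM)}$, used in the cross-term analysis below).

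The technical core is the limit-conformity bound. To estimate $\bos{\cW}_h(\bsig(\bo{u}))$ I would bound $w_h(\bsig(\bo{u}),\uvec{v}_h)$ by $h^{\frac12+\fr}(\dots)\lVert\uvec{v}_h\rVert_{1,h}$ for an arbitrary $\uvec{v}_h\in\ddgs$. Inserting the matrix projections $\bos{\Pi}^1_K\bsig(\bo{u})$ cell by cell and invoking the defining relations of $\dgr{K}$ (equation~\eqref{eq:G1K}) and of the face and edge reconstructions, together with the continuous integration by parts $\int_K\bsig(\bo{u})\colon\be(\bo{w})=-\int_K(\DIV\bsig(\bo{u}))\cdot\bo{w}+\sum_{\sigma\in\cF_K}\int_\sigma(\bsig(\bo{u})\mathbf{n}_{\Ksig})\cdot\bo{w}$ (valid since $\bsig(\bo{u})\in H_{\DIV}$ and symmetric), the polynomial parts telescope and cancel against the cell, face, edge and $\Gamma$ terms of $w_h$: here one uses that the normal traces of $\bsig(\bo{u})$ are single-valued across interior faces and satisfy $\gamma^{\pm}_{\bo{n}}\bsig(\bo{u})=\mp\blambda$ on $\Gamma$, so that the fracture contributions combine with $\int_\Gamma\Pi^0_{\cF_\Gamma}(\gamma^+_{\bo{n}}\bsig(\bo{u}))\cdot\sjump{\uvec{v}_h}_h$ and leave only a residual of $\blambda$ minus a polynomial approximation. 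What remains are residuals $\bsig(\bo{u})-\bos{\Pi}^1_K\bsig(\bo{u})$ on cells, their traces on faces and edges, and a trace residual of $\blambda$ on the fracture faces, each paired with a reconstruction or a degree-of-freedom difference that is controlled by $\lVert\uvec{v}_h\rVert_{1,h}$ (by the definition \eqref{discrete_norm} of this norm and discrete trace inequalities for polynomials). Estimating the residuals by the $H^{\frac12+\fr}$ Bramble--Hilbert and trace inequalities for $\bsig(\bo{u})$ and the $L^2$-approximation of $\Pi^0_{\cF_\Gamma}$ for $\blambda$, and using $\gamma^+_{\bo{n}}\bsig(\bo{u})=-\blambda$, one obtains $\bos{\cW}_h(\bsig(\bo{u}))\lesssim h^{\frac12+\fr}\big(\lvert\bsig(\bo{u})\rvert_{H^{\frac12+\fr}(\cM)}+\lvert\blambda\rvert_{H^{\fr}(\Gamma)}\big)$.

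It remains to treat the two fracture quantities. For $\lVert\blambda-\Pi^0_{\cF_\Gamma}\blambda\rVert_{-1/2,\Gamma}$, using that $\blambda-\Pi^0_{\cF_\Gamma}\blambda$ is $L^2(\Gamma)^3$-orthogonal to piecewise constants I would pair it, in the definition of $\lVert\cdot\rVert_{-1/2,\Gamma}$, with $\sjump{\bo{v}}-\Pi^0_{\cF_\Gamma}\sjump{\bo{v}}$; a face-by-face Cauchy--Schwarz together with $\lVert\blambda-\Pi^0_\sigma\blambda\rVert_{L^2(\sigma)}\lesssim h_\sigma^{\fr}\lvert\blambda\rvert_{H^{\fr}(\sigma)}$, $\lVert\sjump{\bo{v}}-\Pi^0_\sigma\sjump{\bo{v}}\rVert_{L^2(\sigma)}\lesssim h_\sigma^{1/2}\lvert\sjump{\bo{v}}\rvert_{H^{1/2}(\sigma)}$ and the trace bound $\lvert\sjump{\bo{v}}\rvert_{H^{1/2}(\Gamma)}\lesssim\lVert\bo{v}\rVert_{H^1(\O\setminus\Gamma)}$ gives $\lVert\blambda-\Pi^0_{\cF_\Gamma}\blambda\rVert_{-1/2,\Gamma}\lesssim h^{\frac12+\fr}\lvert\blambda\rvert_{H^{\fr}(\Gamma)}$. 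For the cross term, when $g\neq0$ (so $s_\fr=2\fr\le1$ and $t_\fr=1$) a direct Cauchy--Schwarz with the $L^2$-projection estimates already gives a bound by $h^{1+2\fr}\lvert\blambda\rvert_{H^{2\fr}(\Gamma)}\lvert\sjump{\bo{u}}\rvert_{H^{1}(\Gamma)}$; when $g=0$ (so $s_\fr=\fr<1$ and $t_\fr=1+\fr$) the tangential traction vanishes, hence $\blambda$ is purely normal with $\blambda\cdot\bo{n}^+\ge0$, the complementarity conditions force $(\sjump{\bo{u}}\cdot\bo{n}^+)(\blambda\cdot\bo{n}^+)=0$ a.e.\ on $\Gamma$, and the face-wise contribution reduces to $-\lvert\sigma\rvert(\Pi^0_\sigma\blambda\cdot\bo{n}^+)(\Pi^0_\sigma\sjump{\bo{u}}\cdot\bo{n}^+)$. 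Using that $\blambda\cdot\bo{n}^+$ is supported in $\{\sjump{\bo{u}}\cdot\bo{n}^+=0\}$, that the higher regularity $\sjump{\bo{u}}\in H^{1+\fr}(\Gamma)$ embeds into $C^{0,\fr}$ on each flat face (Sobolev embedding in dimension two -- which is precisely why the larger exponent $t_\fr=1+\fr$ is imposed here), and fractional Poincaré estimates on the portions of $\sigma$ where $\blambda\cdot\bo{n}^+$, resp.\ $\sjump{\bo{u}}\cdot\bo{n}^+$, vanishes on a fixed fraction of the face, one recovers the optimal bound $h^{1+2\fr}\lvert\blambda\rvert_{H^{s_\fr}(\Gamma)}\lvert\sjump{\bo{u}}\rvert_{H^{t_\fr}(\Gamma)}$. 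I expect this frictionless cross-term estimate -- reconciling the contact complementarity with the local geometry of the faces without any hypothesis on the unknown contact set -- to be the main obstacle; the remaining manipulations are routine. Finally, collecting the four bounds, inserting them into \eqref{eq:abstract.error.estimate}--\eqref{eq:abstract.error.estimate_lam}, and using $h\lesssim 1$ to rewrite every resulting power of $h$ as $h^{\frac12+\fr}$, yields \eqref{eq:total.error}--\eqref{eq:total.error_lm}.
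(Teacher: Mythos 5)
Your proposal is correct and follows essentially the same route as the paper: the abstract estimates of Theorem~\ref{thm:abstract.error.estimate} are combined with separate bounds on the primal consistency (Lemma~\ref{l4.4}), the adjoint consistency (Lemma~\ref{lemma:adjoint_consistency}), the $L^2$-to-$H^{-1/2}$ projection error (Lemma~\ref{lma_Pi_lambda_est}), and the fracture cross term (Lemmas~\ref{lemma:normal_complement}--\ref{l5.9}), exactly as you outline. The frictionless cross-term estimate that you correctly identify as the main obstacle is handled in the paper by splitting each fracture face into its contact and non-contact zones and invoking the estimates of \cite{MR3357636} on whichever zone occupies at least half the face, which matches the argument you sketch.
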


\begin{remark}[Error estimates]\label{rem:error.estimates}
The error estimate \eqref{eq:total.error} is (potentially, if the solution is smooth enough) better in the case of no-friction, since $\fr$ is then allowed to go almost to $1$, instead of being limited to $1/2$ in case of friction. We note that the contact mechanics problem is presented for quadratic finite elements in \cite{wohlmuthquadraticcontact} but that the analysis in this reference, which allows for $0\le \fr<1$, is only performed in the case of zero Tresca threshold $g$. It is unclear at the moment if the analysis allowing for larger $\fr$ can be adapted (in the case of finite elements or our scheme) to the case $g\not=0$. Our numerical simulations however indicate that the higher accuracy is also achieved in the latter case.
\end{remark}

\section{Proof of error estimates}\label{sec_proofs}
We start this section with bounds involving face operators.

\begin{lemma}
For all $\uvec{v}_h\in\dgs$ and $K\in\cM$, the following relations hold:
\begin{alignat}{2}
\lVert \uvec{v}_h\rVert^2_{1,K}\approx{}&\lVert \nabla\up{K}\uvec{v}_h\rVert^2_{L^2(K)}+\mathbb S_K(\uvec{v}_h,\uvec{v}_h)\,,\label{eq:norm_equiv}\\
h_K\lVert \nabla_\sigma \up{\Ksig}\uvec{v}_h\rVert^2_{L^2(\sigma)}\lesssim{}&\lVert \uvec{v}_h\rVert^2_{1,K}\quad\forall \sigma\in\cF_K\,,\label{eq:gradPot.bound}\\
h^{-{1}}_K\lVert \bo{v}_K-\up{\Ksig}\uvec{v}_h\rVert^2_{L^2(\sigma)}\lesssim{}&\lVert\uvec{v}_h\rVert_{1,K}^2\quad\forall \sigma\in\cF_K,\label{eq:vk.bound}
\end{alignat}
where $\nabla_\sigma$ is the tangential gradient on $\sigma$.
\end{lemma}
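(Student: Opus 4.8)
The engine of the proof is \eqref{eq:norm_equiv}, and the single non-routine ingredient is the relation tying the ``full'' gradient reconstruction $\mathbb G^1_K$ to the gradient of the potential reconstruction $\up{K}$. Concretely, the plan is to first establish the exact identity
\[
\int_K\big(\mathbb G^1_K\uvec{v}_h-\nabla\up{K}\uvec{v}_h\big)\colon\bos{\xi}=\sum_{\sigma\in\cF_K}\int_\sigma\big(\up{\Ksig}\uvec{v}_h-\up{K}\uvec{v}_h\big)\cdot\bos{\xi}\,\bo{n}_{K\sigma}\qquad\forall\bos{\xi}\in\mathbb P^1(K)^{3\times 3},
\]
which is obtained by writing Green's formula (row-wise) for the $\R^3$-valued field $\up{K}\uvec{v}_h$ tested against $\bos{\xi}$, using that $\DIV\bos{\xi}$ is a constant vector so that $\int_K\up{K}\uvec{v}_h\cdot\DIV\bos{\xi}=\int_K\bo{v}_K\cdot\DIV\bos{\xi}$ by the projection property \eqref{eq:PiK.Pot}, and subtracting the definition \eqref{eq:G1K} of $\mathbb G^1_K$ (which is valid against the same test functions). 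Taking $\bos{\xi}=\mathbb G^1_K\uvec{v}_h-\nabla\up{K}\uvec{v}_h\in\mathbb P^1(K)^{3\times 3}$ in this identity, applying Cauchy--Schwarz, the discrete trace inequality $\lVert\bos{\xi}\bo{n}_{K\sigma}\rVert_{L^2(\sigma)}\le\lVert\bos{\xi}\rVert_{L^2(\sigma)}\lesssim h_K^{-1/2}\lVert\bos{\xi}\rVert_{L^2(K)}$ together with $\#\cF_K\lesssim 1$, and noting that $\sum_{\sigma\in\cF_K}\lVert\up{K}\uvec{v}_h-\up{\Ksig}\uvec{v}_h\rVert_{L^2(\sigma)}^2\le h_K\,\mathbb S_K(\uvec{v}_h,\uvec{v}_h)$ (first term of \eqref{eq:stabK}), one divides by $\lVert\bos{\xi}\rVert_{L^2(K)}$ to get $\lVert\mathbb G^1_K\uvec{v}_h-\nabla\up{K}\uvec{v}_h\rVert_{L^2(K)}\lesssim\mathbb S_K(\uvec{v}_h,\uvec{v}_h)^{1/2}$. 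Then \eqref{eq:norm_equiv} follows from the triangle inequality and the definition $\lVert\uvec{v}_h\rVert_{1,K}^2=\lVert\mathbb G^1_K\uvec{v}_h\rVert_{L^2(K)}^2+\mathbb S_K(\uvec{v}_h,\uvec{v}_h)$, in both directions.

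For \eqref{eq:gradPot.bound} the plan is to split $\up{\Ksig}\uvec{v}_h=(\up{K}\uvec{v}_h)|_\sigma+r_\sigma$ with $r_\sigma\coloneqq\up{\Ksig}\uvec{v}_h-(\up{K}\uvec{v}_h)|_\sigma\in\mathbb P^2(\sigma)^3$. The tangential gradient of the trace is controlled by a discrete trace inequality on $\mathbb P^1$ tensors, $\lVert\nabla_\sigma(\up{K}\uvec{v}_h)|_\sigma\rVert_{L^2(\sigma)}\le\lVert(\nabla\up{K}\uvec{v}_h)|_\sigma\rVert_{L^2(\sigma)}\lesssim h_K^{-1/2}\lVert\nabla\up{K}\uvec{v}_h\rVert_{L^2(K)}$, so that $h_K\lVert\nabla_\sigma(\up{K}\uvec{v}_h)|_\sigma\rVert_{L^2(\sigma)}^2\lesssim\lVert\nabla\up{K}\uvec{v}_h\rVert_{L^2(K)}^2\lesssim\lVert\uvec{v}_h\rVert_{1,K}^2$ by \eqref{eq:norm_equiv}. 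For the remainder, an inverse inequality on the face gives $\lVert\nabla_\sigma r_\sigma\rVert_{L^2(\sigma)}\lesssim h_\sigma^{-1}\lVert r_\sigma\rVert_{L^2(\sigma)}$, whence (using $h_\sigma\approx h_K$ from mesh regularity) $h_K\lVert\nabla_\sigma r_\sigma\rVert_{L^2(\sigma)}^2\lesssim h_K^{-1}\lVert r_\sigma\rVert_{L^2(\sigma)}^2\le\mathbb S_K(\uvec{v}_h,\uvec{v}_h)\le\lVert\uvec{v}_h\rVert_{1,K}^2$, again by the first term of \eqref{eq:stabK}. Summing the two contributions yields \eqref{eq:gradPot.bound}.

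For \eqref{eq:vk.bound}, I would use $\bo{v}_K=\Pi^0_K\up{K}\uvec{v}_h$ (from \eqref{eq:PiK.Pot}) to write $\bo{v}_K-(\up{K}\uvec{v}_h)|_\sigma=(\Pi^0_K\up{K}\uvec{v}_h-\up{K}\uvec{v}_h)|_\sigma$, and bound it by a discrete trace inequality followed by the Poincar\'e--Wirtinger inequality on $K$: $\lVert\Pi^0_K\up{K}\uvec{v}_h-\up{K}\uvec{v}_h\rVert_{L^2(\sigma)}^2\lesssim h_K^{-1}\lVert\Pi^0_K\up{K}\uvec{v}_h-\up{K}\uvec{v}_h\rVert_{L^2(K)}^2\lesssim h_K\lVert\nabla\up{K}\uvec{v}_h\rVert_{L^2(K)}^2$; with \eqref{eq:norm_equiv} this gives $h_K^{-1}\lVert\bo{v}_K-(\up{K}\uvec{v}_h)|_\sigma\rVert_{L^2(\sigma)}^2\lesssim\lVert\uvec{v}_h\rVert_{1,K}^2$. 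A triangle inequality together with $h_K^{-1}\lVert(\up{K}\uvec{v}_h)|_\sigma-\up{\Ksig}\uvec{v}_h\rVert_{L^2(\sigma)}^2\le\mathbb S_K(\uvec{v}_h,\uvec{v}_h)$ from \eqref{eq:stabK} then closes \eqref{eq:vk.bound}. I expect the gradient identity underlying \eqref{eq:norm_equiv} to be the only delicate point: it is what converts the bound on the ``discrete'' gradient $\mathbb G^1_K$ into one on $\nabla\up{K}$ modulo the face jumps measured by $\mathbb S_K$; everything after that is a bookkeeping exercise with polynomial trace, inverse, and Poincar\'e inequalities whose constants are uniform under the mesh-regularity assumption.
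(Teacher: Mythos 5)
Your proposal is correct and follows essentially the same route as the paper: the key identity relating $\mathbb G^1_K\uvec{v}_h$ and $\nabla\up{K}\uvec{v}_h$ via Green's formula, \eqref{eq:PiK.Pot} and \eqref{eq:G1K}, tested against the difference itself and bounded by $\mathbb S_K(\uvec{v}_h,\uvec{v}_h)^{1/2}$, is exactly the paper's argument, as are the trace/inverse-inequality splittings for \eqref{eq:gradPot.bound} and the $\Pi^0_K$-approximation argument for \eqref{eq:vk.bound}.
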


\begin{proof}
Let $\uvec{v}_h\in\dgs$ and $K\in\cM$. For every $\bos{\xi}\in\P^1(K)^{3\times 3}$, we have
\begin{alignat*}{1}
\int_K \nabla\up{K}\underline{\bv}_h\colon\bos{\xi}\overset{\text{IBP}, \eqref{eq:PiK.Pot}} &=-\int_K \bo{v}_K\cdot\DIV(\bos{\xi})+\sum_{\sigma\in\cF_K}\int_\sigma \Up^2_{K}\underline{\bv}_h\cdot\bos{\xi}{\bo{\bf n}_{K\sigma}}\nonumber\\
\overset{\eqref{eq:G1K}}&=\int_K \G^1_{K}\underline{\bv}_h\colon\bos{\xi}+\sum_{\sigma\in\cF_K}\int_\sigma \big(\up{K}\uvec{v}_h-\Up^2_{K\sigma}\underline{\bv}_h\big)\cdot\bos{\xi}{\bo{\bf n}_{K\sigma}}\nonumber\\
&\leq \int_K \G^1_{K}\underline{\bv}_h\colon\bos{\xi}+\sum_{\sigma\in\cF_K}\lVert \up{K}\uvec{v}_h-\Up^2_{K\sigma}\underline{\bv}_h\rVert_{L^2(\sigma)}\lVert \bos{\xi}\rVert_{L^2(\sigma)}.
\end{alignat*}
Applying the discrete trace inequality \cite[Lemma 1.32]{hho-book} to $\bos{\xi}$, we infer
\begin{align}\label{eq:gradPotGrad.int}
\int_K \big(\nabla\up{K}\underline{\bv}_h-\dgr{K}\uvec{v}_h\big)\colon\bos{\xi}\lesssim \left(\sum_{\sigma\in\cF_K}h_K^{-1/2}\lVert \up{K}\uvec{v}_h-\Up^2_{K\sigma}\underline{\bv}_h\rVert_{L^2(\sigma)}\right)\lVert\bos{\xi}\rVert_{L^2(K)}.
\end{align}
Taking $\bos{\xi}= \nabla\up{K}\underline{\bv}_h-\dgr{K}\uvec{v}_h\in\P^1(K)^{3\times 3}$ and simplifying, we obtain
\begin{align}\label{eq:gradPotGrad.bound}
\lVert \nabla\up{K}\underline{\bv}_h-\dgr{K}\uvec{v}_h\rVert_{L^2(K)}\lesssim \sum_{\sigma\in\cF_K}h_K^{-1/2}\lVert \up{K}\uvec{v}_h-\Up^2_{K\sigma}\underline{\bv}_h\rVert_{L^2(\sigma)}\overset{\eqref{eq:stabK}}\leq\mathbb S_K(\uvec{v}_h,\uvec{v}_h)^{1/2}.
\end{align}
The norm equivalence \eqref{eq:norm_equiv} follows by combining triangles inequalities, the definition \eqref{discrete_norm} of $\lVert{\cdot}\rVert_{1,K}$, and \eqref{eq:gradPotGrad.bound}.

Next, assume that $\sigma\in\cF_K$ is a generic face of $K$. The relation \eqref{eq:gradPot.bound} follows by using the triangle inequality, the bound $|\nabla_\sigma\up{K}\uvec{v}_h|\le |\nabla\up{K}\uvec{v}_h|$, the discrete inverse and trace inequalities \cite[Lemmas 1.28 and 1.32]{hho-book}, the relation $h_K\approx h_\sigma$ (resulting from the mesh regularity assumption) and \eqref{eq:stabK} to write
\begin{align*}
\lVert \nabla_\sigma\up{\Ksig}\uvec{v}_h\rVert^2_{L^2(\sigma)}&\lesssim \lVert \nabla_\sigma \big(\Up^2_{K\sigma}\underline{\bv}_h-\up{K}\uvec{v}_h\big)\rVert^2_{L^2(\sigma)}+\lVert \nabla\up{K}\uvec{v}_h\rVert^2_{L^2(\sigma)}\\
&\lesssim h_K^{-2}\lVert \up{K}\uvec{v}_h-\Up^2_{K\sigma}\underline{\bv}_h\rVert^2_{L^2(\sigma)}+h^{-1}_K\lVert \nabla\up{K}\uvec{v}_h\rVert^2_{L^2(K)}\\
\overset{\eqref{eq:stabK}}&\lesssim h^{-1}_K\left(\mathbb S_K(\uvec{v}_h,\uvec{v}_h)+\lVert \nabla\up{K}\uvec{v}_h\rVert^2_{L^2(K)}\right)\\
\overset{\eqref{eq:norm_equiv}}&\approx h^{-1}_K\lVert\uvec{v}_h\rVert^2_{1,K}.
\end{align*}
Furthermore, using \eqref{eq:PiK.Pot}, the triangle inequality, the discrete trace inequality \cite[Lemma 1.32]{hho-book} and the approximation properties of $\Pi^0_K$ \cite[Theorem 1.45]{hho-book}, we obtain
\begin{align*}
\lVert \bo{v}_K-\up{\Ksig}\uvec{v}_h\rVert_{L^2(\sigma)}&\leq \lVert \Pi^0_K\up{K}\uvec{v}_h-\up{K}\uvec{v}_h\rVert_{L^2(\sigma)}+\lVert \up{K}\uvec{v}_h-\up{\Ksig}\uvec{v}_h\rVert_{L^2(\sigma)}\\
\overset{\eqref{eq:stabK}}&\lesssim h^{-1/2}_K\lVert \Pi^0_K\up{K}\uvec{v}_h-\up{K}\uvec{v}_h\rVert_{L^2(K)}+h^{1/2}_K\mathbb S_K(\uvec{v}_h,\uvec{v}_h)^{1/2}\\
&\lesssim h^{1/2}_K\left(\lVert\nabla\up{K}\uvec{v}_h\rVert_K+\mathbb S_K(\uvec{v}_h,\uvec{v}_h)^{1/2}\right)\\
\overset{\eqref{eq:norm_equiv}}&\approx h^{1/2}_K\lVert \uvec{v}_h\rVert_{1,K}.
\qedhere
\end{align*}
\end{proof}
The following lemma provides a DOF-based bound on the discrete norm, which plays a crucial role in proving the bound on the consistency of the gradient reconstruction (Lemma \ref{l4.4}).

\begin{lemma}\label{l3.4}
Let $K\in \cM$ and $\uvec{v}_h\in\dgs$. Then, it holds:
\begin{align}\label{eq:dofnorm}
\lVert \uvec{v}_h\rVert_{1,K}\lesssim h^{1/2}_K\left(\lvert \bo{v}_K\rvert+\max_{\sigma\in\cF_K}\lvert \bo{v}_{\cK\sigma}\rvert+\max_{e\in\cE_K}\lvert \bo{v}_\Ke\rvert+\max_{s\in\mathcal V_K}\lvert \bo{v}_\Ks\rvert\right).
\end{align}
\end{lemma}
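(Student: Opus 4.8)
The plan is to bound the discrete $\|\cdot\|_{1,K}$ norm, which by \eqref{eq:norm_equiv} is equivalent to $\|\nabla\up{K}\uvec{v}_h\|_{L^2(K)}+\mathbb S_K(\uvec{v}_h,\uvec{v}_h)^{1/2}$, purely in terms of the local degrees of freedom. The key observation is that the gradient and displacement reconstructions $\mathbb G^1_{K\sigma},\up{K\sigma},\mathbb G^1_K,\up{K}$ are defined through explicit linear formulas whose right-hand sides only involve the raw DOFs $\bo{v}_{\Ks},\bo{v}_{\Ke},\bo{v}_{\cK\sigma},\bo{v}_K$, integrated against test polynomials on mesh entities whose measures are controlled by powers of $h_K$ via mesh regularity. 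So the strategy is a bottom-up sequence of inverse-type estimates: first control the edge reconstruction $\up{Ke}\uvec{v}_h$, then the face reconstructions $\mathbb G^1_{K\sigma},\up{K\sigma}$, then the element reconstructions $\mathbb G^1_K,\up{K}$, and finally assemble these into $\mathbb S_K$ and $\|\nabla\up{K}\uvec{v}_h\|_{L^2(K)}$.

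Concretely, first I would note that $\up{Ke}\uvec{v}_h\in\P^2(e)^3$ is the degree-$2$ polynomial on $e$ determined by its two endpoint values $\bo{v}_{\Ks}$ and its mean $\bo{v}_{\Ke}$; a scaling argument on the reference edge (combined with norm equivalence on the finite-dimensional space $\P^2$) gives $\|\up{Ke}\uvec{v}_h\|_{L^2(e)}\lesssim h_e^{1/2}(|\bo{v}_{\Ke}|+\max_{s\in\cV_e}|\bo{v}_{\Ks}|)$. Next, testing the defining relation for $\mathbb G^1_{K\sigma}$ against $\bos{\xi}=\mathbb G^1_{K\sigma}\uvec{v}_h$, using $\|\DIV\bos{\xi}\|_{L^2(\sigma)}\lesssim h_\sigma^{-1}\|\bos{\xi}\|_{L^2(\sigma)}$ (inverse inequality on the finite-dimensional space $\P^1(\sigma)$) and the edge-boundary term controlled by the previous step plus a discrete trace inequality, yields $\|\mathbb G^1_{K\sigma}\uvec{v}_h\|_{L^2(\sigma)}\lesssim h_\sigma^{-1/2}|\bo{v}_{\cK\sigma}|+h_\sigma^{-1}\cdot h_e^{1/2}(\cdots)$; after using $h_K\approx h_\sigma\approx h_e$ from mesh regularity this is $\lesssim h_K^{-1/2}(|\bo{v}_{\cK\sigma}|+\max_{e\in\cE_\sigma}|\bo{v}_{\Ke}|+\max_{s\in\cV_\sigma}|\bo{v}_{\Ks}|)$. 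An analogous test of the defining relation for $\up{K\sigma}$ against an element of $\mathbb Z(\sigma)^3$, using $\|\up{K\sigma}\uvec{v}_h\|_{L^2(\sigma)}\lesssim h_\sigma\|\DIV\bos{\eta}\|$-type scaling together with the just-obtained bound on $\mathbb G^1_{K\sigma}$, gives $\|\up{K\sigma}\uvec{v}_h\|_{L^2(\sigma)}\lesssim h_K^{1/2}(|\bo{v}_{\cK\sigma}|+\max_{e\in\cE_\sigma}|\bo{v}_{\Ke}|+\max_{s\in\cV_\sigma}|\bo{v}_{\Ks}|)$ — note the boundary terms here carry an $h_\sigma$ from the measure of $e$ combined with the $h_\sigma^{-1}$-weight applied to $\up{Ke}$, and one must check the weighted $L^2$-norm of $\bos{\eta}$ on $e$ scales consistently. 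Repeating this pattern one level up, for $\mathbb G^1_K\uvec{v}_h\in\P^1(K)^{3\times 3}$ and $\up{K}\uvec{v}_h\in\P^2(K)^3$, using \eqref{eq:G1K}, the inverse inequality $\|\DIV\bos{\xi}\|_{L^2(K)}\lesssim h_K^{-1}\|\bos{\xi}\|_{L^2(K)}$, a discrete trace inequality $\|\bos{\xi}\|_{L^2(\sigma)}\lesssim h_K^{-1/2}\|\bos{\xi}\|_{L^2(K)}$, and the face bounds above, produces $\|\mathbb G^1_K\uvec{v}_h\|_{L^2(K)}\lesssim h_K^{-1}|\bo{v}_K|+h_K^{-1/2}\cdot(\text{face terms})\lesssim h_K^{-1}\big(|\bo{v}_K|+\max_\sigma|\bo{v}_{\cK\sigma}|+\max_e|\bo{v}_{\Ke}|+\max_s|\bo{v}_{\Ks}|\big)\cdot h_K^{1/2}$; wait — tracking powers carefully, the face contribution $h_K^{-1/2}\|\up{K\sigma}\uvec{v}_h\|_{L^2(\sigma)}\lesssim h_K^{-1/2}\cdot h_K^{1/2}(\cdots)=(\cdots)$, so one finds $\|\mathbb G^1_K\uvec{v}_h\|_{L^2(K)}\lesssim h_K^{-1}|\bo{v}_K|+(\cdots)$, and after multiplying through, $\|\nabla\up{K}\uvec{v}_h\|_{L^2(K)}=\|\mathbb G^1_K\uvec{v}_h\|_{L^2(K)}$ is bounded as claimed once the $h_K^{1/2}$ is extracted. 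Similarly $\up{K}$ is controlled via its defining relation, the inverse inequality, and the bound on $\mathbb G^1_K$.

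Finally I would assemble the stabilization term: each of the three groups of terms in \eqref{eq:stabK} — $h_K^{-1}\|\up{K}\uvec{v}_h-\up{K\sigma}\uvec{v}_h\|_{L^2(\sigma)}^2$, $\|\up{K}\uvec{v}_h-\up{Ke}\uvec{v}_h\|_{L^2(e)}^2$, and $h_K|\up{K}\uvec{v}_h(\bx_s)-\bo{v}_{\Ks}|^2$ — is estimated by the triangle inequality together with the individual bounds on $\up{K},\up{K\sigma},\up{Ke}$ (for the pointwise vertex term, one additionally uses the inverse inequality $\|w\|_{L^\infty(K)}\lesssim h_K^{-3/2}\|w\|_{L^2(K)}$ for $w\in\P^2(K)^3$, together with $|\up{K}\uvec{v}_h(\bx_s)|\lesssim h_K^{-3/2}\|\up{K}\uvec{v}_h\|_{L^2(K)}$, and for $\bo{v}_{\Ks}$ there is nothing to do). Collecting all contributions and extracting the common factor $h_K$ under the square root gives $\mathbb S_K(\uvec{v}_h,\uvec{v}_h)^{1/2}\lesssim h_K^{1/2}\big(|\bo{v}_K|+\max_{\sigma\in\cF_K}|\bo{v}_{\cK\sigma}|+\max_{e\in\cE_K}|\bo{v}_{\Ke}|+\max_{s\in\cV_K}|\bo{v}_{\Ks}|\big)$, and combined with the $\|\nabla\up{K}\uvec{v}_h\|_{L^2(K)}$ bound and \eqref{eq:norm_equiv} this yields \eqref{eq:dofnorm}. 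The main obstacle is bookkeeping: keeping the powers of $h_K$ exactly right through the nested chain of reconstructions (each level introduces both an $h_K^{-1}$ from inverting a divergence and an $h_K^{-1/2}$ or $h_K^{1/2}$ from trace/measure factors), and in particular making sure the weighted spaces $\mathbb Z(\sigma)^3$ and $\mathbb Z(K)^3$ — where the test functions vanish to first order at the barycenter — are handled with scaling constants that depend only on mesh regularity and not on $h_K$; this is where a clean reference-element scaling argument, rather than ad hoc estimates, is essential.
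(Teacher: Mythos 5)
Your proposal is correct and follows essentially the same route as the paper: reduce to $\lVert\nabla\up{K}\uvec{v}_h\rVert_{L^2(K)}+\mathbb S_K(\uvec{v}_h,\uvec{v}_h)^{1/2}$ via \eqref{eq:norm_equiv}, then control the edge, face and cell reconstructions by the degrees of freedom with the appropriate powers of $h_K$ — the only difference being that the paper imports these reconstruction bounds from \cite[Lemma 8]{Di-Pietro.Droniou.ea:24} rather than re-deriving them by testing the defining relations as you do. A couple of your intermediate exponents are not sharp (e.g.\ one finds $\lVert\mathbb G^1_{K\sigma}\uvec{v}_h\rVert_{L^2(\sigma)}\lesssim\max|\text{DOF}|$ and $\lVert\up{K\sigma}\uvec{v}_h\rVert_{L^2(\sigma)}\lesssim h_K\max|\text{DOF}|$, not $h_K^{-1/2}$ and $h_K^{1/2}$), but this does not affect the final $h_K^{1/2}$ factor in \eqref{eq:dofnorm}.
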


\begin{proof}
Recalling the definition \eqref{discrete_norm} of $\lVert\cdot\rVert_{1,K}$, we will begin by finding a bound on $\mathbb S_K(\uvec{v}_h,\uvec{v}_h)$. To this purpose, we use triangle and discrete trace inequalities \cite[Lemma 1.32]{hho-book} to write
\begin{align*}
\mathbb S_K(\uvec{v}_h,{}&\uvec{v}_h)=h^{-1}_K\sum_{\sigma\in\cF_K}\lVert \Upsilon^2_K\uvec{v}_h-\Upsilon^2_{K\sigma}\uvec{v}_h\rVert_{L^2(\sigma)}^2+\sum_{e\in\cE_K}\lVert \Upsilon^2_K\uvec{v}_h-\Upsilon^2_{Ke}\uvec{v}_h\rVert_{L^2(e)}^2\\
&\qquad\qquad+h_K\sum_{s\in\mathcal V_K}\lvert \Upsilon^2_K\uvec{v}_h(\bx_s)-\bo{v}_{\cK s }\rvert^2\\
\lesssim{}& h^{-2}_K\lVert \Upsilon^2_K\uvec{v}_h\rVert^2_{L^2(K)}+\max_{\sigma\in\cF_K}h^{-1}_K\lVert \Upsilon^2_{K\sigma}\uvec{v}_{h}\rVert^2_{L^2(\sigma)}
+\max_{e\in\cE_K}\lVert \Upsilon^2_{Ke}\uvec{v}_h\rVert^2_{L^2(e)}+h_K\max_{s\in\mathcal V_K}\lvert \bo{v}_{\cK s}\rvert^2.
\end{align*}
Now, the estimates of \cite[Lemma 8]{Di-Pietro.Droniou.ea:24} give
\begin{align}
\lVert \up{Ke}\uvec{v}_h\rVert_{L^2(e)}&\lesssim \norm{L^2(e)}{\bo{v}_\Ke} + h^{1/2}_e\sum_{s\in\mathcal V_e}\lvert \bo{v}_\Ks\rvert,\nonumber\\
\lVert \up{K\sigma}\uvec{v}_h\rVert_{L^2(\sigma)}&\lesssim \norm{L^2(\sigma)}{\bo{v}_{\cK\sigma}} + h_\sigma^{1/2}\sum_{e\in\cE_\sigma}\norm{L^2(e)}{\up{Ke}\uvec{v}_h}
+h_\sigma \sum_{s\in\mathcal V_\sigma}\lvert \bo{v}_\Ks\rvert, \nonumber\\ 
\lVert \up{K}\uvec{v}_h\rVert_{L^2(K)}&\lesssim \norm{L^2(K)}{\bo{v}_K} + h_K^{1/2}\sum_{\sigma\in\cF_K} \norm{L^2(\sigma)}{\bo{v}_{\cK\sigma}} + h_K\sum_{e\in\cE_K}\norm{L^2(e)}{\up{\Ke}\uvec{v}_h}+h_K^{3/2}\sum_{s\in\mathcal V_K}\lvert \bo{v}_\Ks\rvert.\label{eq:cellPotdof.bound}
\end{align}
Using triangle inequality, equation \eqref{eq:norm_equiv}, and mesh regularity (which gives $h_e\approx h_{\sigma}\approx h_K$), we obtain
\begin{align*}
\mathbb S_K(\uvec{v}_h,\uvec{v}_h)\lesssim h^{-2}_K\lVert \bo{v}_K\rVert_{L^2(K)}^2+\max_{\sigma\in\cF_K}h^{-1}_K\lVert \bo{v}_{\Ksig}\rVert^2_{L^2(\sigma)}+\max_{e\in\cE_K}\lVert \bo{v}_{\cK e}\rVert^2_{L^2(e)}+h_K\max_{s\in\mathcal V_K}\lvert \bo{v}_{\cK s}\rvert^2.
\end{align*}
Since, all of $\bo{v}_K, \bo{v}_{\Ksig},\bo{v}_{\cK e}, \text{ and }\bo{v}_{\cK s}$ are constant, a use of mesh regularity (which gives $|e|\approx h_K$, $|\sigma|\approx h_K^2$ and $|K|\approx h^3_K$) shows that $\mathbb S_K(\uvec{v}_h,\uvec{v}_h)^{1/2}$ is bounded above by the right-hand side of \eqref{eq:dofnorm}. The bound on $\lVert \uvec{v}_h\rVert_{1,K}$ then follows from \eqref{eq:norm_equiv}, the bound \eqref{eq:cellPotdof.bound}, and an inverse inequality.
\end{proof}

\subsection{Proof of the discrete Korn Inequality (Theorem~\ref{thm:korn})}\label{sec:Korn}

Recalling \eqref{eq:norm_equiv}, to prove the inequality \eqref{eq:dis_korn}, we just need to bound $\lVert\nabla_{\cM}\up{h}\uvec{v}_h\rVert_{L^2(\O\setminus\overline{\Gamma})}$ (where $\nabla_\cM$ is the broken gradient on $\cM$).
Applying the same techniques as in \cite[Theorem 5.7]{jhr} (which consist in adapting the node-averaging approach of \cite[Section 7.3.2]{hho-book}
to the case of a fractured medium), we obtain
\begin{equation}\label{eq:Korn.start}
\lVert \nabla_\cM\up{h}\uvec{v}_h\rVert^2_{L^2(\O)}\lesssim \lVert \be_\cM( \up{h}\uvec{v}_h)\rVert_{L^2(\O\setminus\overline{\Gamma})}^2+\sum_{\sigma\in\cF\setminus\cF_\Gamma}h^{-1}_\sigma\lVert \sjump{\up{h}\uvec{v}_h}\rVert_{L^2(\sigma)}^2, 
\end{equation}
where $\be_\cM$ is the broken strain on $\cM$ and, for all $\sigma=K|L\in\cF^\inte$, $|\sjump{\up{h}\uvec{v}_h}|_\sigma|=|\up{K}\uvec{v}_h-\up{L}\uvec{v}_h|$ while, if $\sigma\in\cF^\ext\cap\cF_K$, $|\sjump{\up{h}\uvec{v}_h}|_\sigma|=|\up{K}\uvec{v}_h|$. Note that, in \eqref{eq:Korn.start}, thanks to the modified node-averaging operator of \cite[Theorem 5.7]{jhr} only jumps across non-fracture faces appear; this will be essential below as only these jumps can be bounded above using the stabilisation $\mathbb S_h$.

 Recalling \eqref{eq:gradPotGrad.int}, taking $\bos{\xi}=\be_\cM(\up{h}\uvec{v}_h)|_K-\be_h(\uvec{v}_h)|_K\in \mathbb P^1(K)^{3\times 3}$ and simplifying, we have
$$
\norm{L^2(K)}{\be_\cM(\up{h}\uvec{v}_h)-\be_h(\uvec{v}_h)}\lesssim \mathbb S_K(\uvec{v}_h,\uvec{v}_h)^{1/2}.
$$

Using a triangle inequality, squaring and summing over $K\in\cM$ yields
\begin{equation}\label{eq:symPot.bound}
\norm{L^2(\Omega)}{\be_\cM(\up{h}\uvec{v}_h)}^2\lesssim \norm{L^2(\Omega)}{\be_h(\uvec{v}_h)}^2+
\sum_{K\in\cM}\mathbb S_K(\uvec{v}_h,\uvec{v}_h).
\end{equation}
Let $\sigma=K|L\in\cF^\inte\setminus\cF_\Gamma$. Since $\sigma$ is not a fracture face, the cells $K$ and $L$ are on the same side of the fracture and hence all the degrees of freedom on $\sigma$ are common to $K$ and $L$, and so $\up{\Ksig}\uvec{v}_h=\up{\Lsig}\uvec{v}_h$. Hence, 
\begin{equation}\label{eq:jump.sigma.int}
|\sjump{\up{h}\uvec{v}_h}_{|\sigma}|=|\up{K}\uvec{v}_h-\up{L}\uvec{v}_h|
=|\left(\up{K}\uvec{v}_h-\up{\Ksig}\uvec{v}_h\right)-\left(\up{L}\uvec{v}_h-\up{\Lsig}\uvec{v}_h\right)|.
\end{equation}
For $\sigma\in\cF^\ext\cap\cF_K$, we have  $\bo{v}_{\Ks}=\bo{v}_{\Ke}=\bo{0}$ for all $s\in\cV_\sigma$ and $e\in\cE_\sigma$. Additionally since $\bo{v}_{\Ksig}=\bo{0}$, it follows that $\up{\Ksig}\uvec{v}_h=\bo{0}.$ Thus
\begin{equation}\label{eq:jump.sigma.ext}
|\sjump{\up{h}\uvec{v}_h}_{|\sigma}|=|\up{K}\uvec{v}_h-\up{\Ksig}\uvec{v}_h|.
\end{equation}
Taking the $L^2(\sigma)$-norms of \eqref{eq:jump.sigma.int} and \eqref{eq:jump.sigma.ext}, using triangle inequalities, summing over $\sigma\in\cF\setminus\cF_\Gamma$, invoking the definition \eqref{eq:stabK} of $\mathbb S_K(\cdot,\cdot)$ and using the mesh
regularity to write $h_\sigma\simeq h_K$ whenever $\sigma\in\cF_K$, we conclude
\begin{equation}\label{eq:Korn.bound.S}
\sum_{\sigma\in\cF\setminus\cF_\Gamma}h^{-1}_\sigma\lVert \sjump{\up{h}\uvec{v}_h}\rVert_{L^2(\sigma)}^2\lesssim \sum_{K\in\cM}\sum_{\sigma\in\cF_K}h^{-1}_{\sigma}\lVert \up{K}\uvec{v}_h-\up{\Ksig}\uvec{v}_h\rVert^2_{L^2(\sigma)}
\lesssim\sum_{K\in\cM}\mathbb S_K(\uvec{v}_h,\uvec{v}_h).
\end{equation}
The inequality \eqref{eq:dis_korn} follows from \eqref{eq:Korn.start}, \eqref{eq:symPot.bound}, and \eqref{eq:Korn.bound.S}. \hfill $\Box$

\subsection{Proof of the discrete inf-sup condition}\label{sec:Inf-sup}

The proof of the inf-sup condition requires to interpolate displacements with minimal ${H}^1(\Omega\setminus{\Gamma})$ regularity. To do so, we therefore first describe a Clément-like (averaged) interpolator for such displacements.

For each $K\in\cM$ and $s\in\cV_K$, we take an open set $U_{\cK s}\subset\bigcup_{K\in\cK s}K$ and a function $\overline{\omega}_{\cK s}\in L^\infty(U_{\cK s})$ such that 
\begin{align}
&\lvert U_{\cK s}\rvert \gtrsim \max\limits_{K\in\cK s}\lvert K\rvert,\qquad\lvert\overline{\omega}_{\cK s}\rvert\lesssim 1,\notag\\
&\frac{1}{\lvert U_{\cK s}\rvert}\int_{U_{\cK s}}\overline{\omega}_{\cK s}=1,\qquad\frac{1}{\lvert U_{\cK s}\rvert}\int_{U_{\cK s}}\bx\overline{\omega}_{\cK s}=\bx_s.\label{5.15b}
\end{align}
We refer to \cite[Appendix A]{jhr} for an explicit construction of $(U_{\cK s},\overline{\omega}_{\cK s})$ in the generic case.
The averaged interpolate $\mathbb I^{a}_h\bo{v}\in\dgs$ of $\bo{v}\in H^1_0(\O\setminus{\Gamma})^3$ is then defined by:
\begin{subequations}\label{3.44}
\begin{alignat}{2}
\left(\mathbb I^{a}_h\bo{v}\right)_{\Ks}&=\left(\mathbb I^{a}_h\bo{v}\right)_{\Ke}=\left(\mathbb I^{a}_h\bo{v}\right)_{\Ksig}=\bo{0}&&\quad\forall s\in \mathcal{V}^{\text{ext}}\,,\,e\in\cE^{\ext}\,,\,\sigma\in\cF^{\ext,} \nonumber\\
\left(\mathbb I^{a}_h\bo{v}\right)_{\Ks}&=\frac{1}{\lvert U_{\mathcal K s}\rvert}\int_{U_{\mathcal K s}}\overline{\omega}_{\mathcal Ks}\bo{v}&&\quad\forall s\in \cV^\inte,\nonumber\\
\left(\mathbb I^{a}_h\bo{v}\right)_{\Ke}&=\frac{1}{2\lvert U_{\mathcal K s_1}\rvert}\int_{U_{\mathcal K s_1}}\overline{\omega}_{\mathcal Ks_1}\bo{v}+\frac{1}{2\lvert U_{\mathcal K s_2}\rvert}\int_{U_{\mathcal K s_2}}\overline{\omega}_{\mathcal Ks_2}\bo{v}&&\quad\forall e\in\cE^\inte,~e=[s_1,s_2],\label{5.16c}\\
\left(\mathbb I^{a}_h\bo{v}\right)_{\cK\sigma}  &=\frac{1}{\lvert\sigma\rvert}\int_{\sigma}\bo{v}|_K &&\quad\forall \sigma\in\cF^\inte,\label{5.16d}\\
\left(\mathbb I^{a}_h\bo{v}\right)_K&=\Pi^0_K \bo{v}&&\quad\forall K\in\cM \nonumber.
\end{alignat}
\end{subequations}
The averaged interpolator $\mathbb I^{a} : H^1_0(\Omega \setminus \Gamma)^3 
\to \dgs$ is needed since, unlike $\mathbb{I}_{h}$, it is well-defined for  functions of minimal $H^1(\Omega \setminus \Gamma)$ regularity 
(for which pointwise evaluation is not available). Its construction using local weighted averages over patches $U_{\Ks}$ lying only on one side of $\Gamma$ ensures that these averages avoid the jumps of the interpolated displacement. Its two key properties used in the proof of Theorem~\ref{thm:inf-sup} 
are: (i) exact recovery of the duality pairing, i.e., $\int_\Gamma \blambda_h\cdot\sjump{\mathbb I^{a}_h\bo{v}}_h= \blambda_h \cdot \sjump{\bo{v}}$ for all $\blambda_h \in {\bf M}_{h}$;  and (ii) uniform continuity for the $H^1(\Omega\setminus\Gamma)$ norm, proved in the next theorem.

\begin{prop}[Stability of the averaged interpolator]\label{prop:stab.avginter}
For $\bo{v}\in H^1_0(\O\setminus{\Gamma})^3$, it holds
\begin{equation}\label{eq:stab_avgint.est1}
  \lVert \mathbb I^{a}_h\bo{v}\rVert_{1,h}\lesssim \lVert\nabla\bo{v}\rVert_{L^2(\O)}.
\end{equation}
\end{prop}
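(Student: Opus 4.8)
The plan is to bound $\|\mathbb I^a_h\bo{v}\|_{1,h}^2 = \sum_{K\in\cM}\big(\|\G^1_K\mathbb I^a_h\bo{v}\|_{L^2(K)}^2 + \mathbb S_K(\mathbb I^a_h\bo{v},\mathbb I^a_h\bo{v})\big)$ locally on each cell $K$, and to show each local contribution is controlled by $\|\nabla\bo{v}\|_{L^2(\omega_K)}^2$ for a bounded neighbourhood $\omega_K$ of $K$ (the union of the sets $U_{\cK s}$ and the cells containing them, over $s\in\cV_K$); summing over $K$ and using the finite overlap of these neighbourhoods (a consequence of mesh regularity) will then give the result. By the norm equivalence \eqref{eq:norm_equiv}, it suffices to control $\|\nabla\up{K}\mathbb I^a_h\bo{v}\|_{L^2(K)}^2$ and $\mathbb S_K(\mathbb I^a_h\bo{v},\mathbb I^a_h\bo{v})$.

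The key device is a polynomial-consistency/invariance argument: for any $\bo{q}\in\mathbb P^1(K)^3$, the properties \eqref{5.15b} of the averaging weights (they reproduce affine functions, since $\frac1{|U_{\cK s}|}\int \overline\omega_{\cK s}=1$ and $\frac1{|U_{\cK s}|}\int \bx\overline\omega_{\cK s}=\bx_s$) together with \eqref{5.16c}--\eqref{5.16d} (edge DOF is the average of the two vertex values of $\bo q$, face and cell DOFs are the exact traces/value of $\bo q$) show that $\mathbb I^a_h\bo q$ coincides with the natural interpolate of $\bo q$, for which $\up{K}\mathbb I^a_h\bo q=\bo q$, $\up{\Ksig}\mathbb I^a_h\bo q=\bo q|_\sigma$, $\up{Ke}\mathbb I^a_h\bo q=\bo q|_e$, and the vertex DOFs equal $\bo q(\bx_s)$; hence $\G^1_K\mathbb I^a_h\bo q=\nabla\bo q$ and $\mathbb S_K(\mathbb I^a_h\bo q,\mathbb I^a_h\bo q)=0$. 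Therefore one may subtract from $\bo v$ a well-chosen $\bo q=\bo q_K\in\mathbb P^1(K)^3$ (e.g. an averaged Taylor polynomial of $\bo v$ on $\omega_K$) without changing either quantity, and reduce to estimating the DOFs of $\mathbb I^a_h(\bo v-\bo q_K)$, each of which is an average of $\bo v-\bo q_K$ over some $U_{\cK s}$, $\sigma$, or $K$. For each such average, a Poincaré–Wirtinger inequality on $\omega_K$ (valid because $\bo q_K$ is the averaged Taylor polynomial) gives $\|\bo v-\bo q_K\|_{L^2(\omega_K)}\lesssim h_K\|\nabla\bo v\|_{L^2(\omega_K)}$, and with $|\overline\omega_{\cK s}|\lesssim1$, $|U_{\cK s}|\gtrsim|K|\approx h_K^3$ and the scaling $|e|\approx h_K$, $|\sigma|\approx h_K^2$, each DOF is bounded by $\|\bo v-\bo q_K\|_{L^2(\omega_K)}$ times an appropriate negative power of $h_K$. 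Plugging these into the DOF-based bound \eqref{eq:dofnorm} of Lemma \ref{l3.4} (applied to $\mathbb I^a_h(\bo v-\bo q_K)$, whose $\|\cdot\|_{1,K}$ equals that of $\mathbb I^a_h\bo v$ minus the polynomial part, which vanishes) yields $\|\mathbb I^a_h\bo v\|_{1,K}=\|\mathbb I^a_h(\bo v-\bo q_K)\|_{1,K}\lesssim \|\nabla\bo v\|_{L^2(\omega_K)}$.

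Concretely the steps are: (i) record the polynomial invariance $\|\mathbb I^a_h\bo q\|_{1,K}=0$ for $\bo q\in\mathbb P^1(K)^3$, using \eqref{5.15b}, \eqref{5.16c}, \eqref{5.16d} and Lemma \ref{lem:prop.reconstructions}; (ii) fix $\bo q_K$ = averaged Taylor polynomial of $\bo v$ over $\omega_K$ and invoke the Bramble–Hilbert/Poincaré estimate $\|\bo v-\bo q_K\|_{L^2(\omega_K)}\lesssim h_K\|\nabla\bo v\|_{L^2(\omega_K)}$; (iii) estimate each DOF of $\mathbb I^a_h(\bo v-\bo q_K)$ by an $L^2(\omega_K)$-norm of $\bo v-\bo q_K$ with the right $h_K$-power, using \eqref{5.15b} and mesh regularity — taking care with the vertex DOFs, which involve the $L^\infty$-type bound $|\overline\omega_{\cK s}|\lesssim1$ and the volume lower bound $|U_{\cK s}|\gtrsim h_K^3$; (iv) feed these into Lemma \ref{l3.4} to get the local bound $\|\mathbb I^a_h\bo v\|_{1,K}\lesssim\|\nabla\bo v\|_{L^2(\omega_K)}$; (v) square, sum over $K\in\cM$, and use the finite-overlap property of $\{\omega_K\}_{K\in\cM}$ to conclude \eqref{eq:stab_avgint.est1}.

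The main obstacle is the treatment of the fracture: because the DOFs are duplicated on matrix-fracture entities and the "averaged" values $(\mathbb I^a_h\bo v)_{\cK s}$, $(\mathbb I^a_h\bo v)_{\cK e}$, $(\mathbb I^a_h\bo v)_{\cK\sigma}$ are built from one-sided restrictions $\bo v|_K$, one must be careful that each $U_{\cK s}$ stays entirely on the side of $\Gamma$ containing $\cK s$ (so that $\bo v$ is $H^1$ on it and Poincaré applies without a jump term), and that the neighbourhood $\omega_K$ used for cell $K$ only sees the matrix component to which $K$ belongs. This is exactly what the construction of $(U_{\cK s},\overline\omega_{\cK s})$ in \cite[Appendix A]{jhr} is designed to guarantee, so the argument goes through, but it is the point that requires the most attention; the rest is routine scaling.
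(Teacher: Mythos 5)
Your overall strategy is the one the paper follows: compare $\mathbb I^a_h\bo{v}$ on each cell with (the interpolate of) a local affine polynomial, estimate the resulting degrees of freedom with Poincar\'e/approximation bounds on a patch, feed them into the DOF-based bound of Lemma~\ref{l3.4}, and conclude by finite overlap. The choice of averaged Taylor polynomial instead of the $L^2(\cN(K))$-projection is immaterial. However, two points in your argument are not correct as written.

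First, the ``invariance'' $\lVert\mathbb I^a_h\bo{q}\rVert_{1,K}=0$ is false for non-constant $\bo{q}\in\mathbb P^1(K)^3$: by \eqref{discrete_norm} this norm contains $\lVert\G^1_K\mathbb I^a_h\bo{q}\rVert_{L^2(K)}=\lVert\nabla\bo{q}\rVert_{L^2(K)}\neq 0$ (only the stabilisation vanishes). Consequently the equality $\lVert\mathbb I^a_h\bo{v}\rVert_{1,K}=\lVert\mathbb I^a_h(\bo{v}-\bo{q}_K)\rVert_{1,K}$ you invoke at the end of step (iv) does not hold; you need a triangle inequality together with $\lVert\nabla\bo{q}_K\rVert_{L^2(K)}\lesssim\lVert\nabla\bo{v}\rVert_{L^2(\omega_K)}$, which is available but must be said. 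Second, and more substantively, the polynomial reproduction of $\mathbb I^a_h$ fails on any cell touching $\partial\O$: by the first line of \eqref{3.44} the averaged interpolator sets all DOFs on $\cV^{\ext}$, $\cE^{\ext}$, $\cF^{\ext}$ to zero, so for such a cell $\mathbb I^a_h\bo{q}$ is \emph{not} the canonical interpolate of $\bo{q}$, $\G^1_K\mathbb I^a_h\bo{q}\neq\nabla\bo{q}$, and $\mathbb S_K(\mathbb I^a_h\bo{q},\mathbb I^a_h\bo{q})\neq 0$. Controlling the resulting boundary discrepancies (of size $|\bo{q}_K|$ on $\partial\O$) requires explicitly using $\bo{v}\in H^1_0$, i.e.\ that $\bo{v}$ vanishes on the external faces, through estimates such as $\lVert\bo{q}_K\rVert_{L^2(\sigma)}=\lVert\bo{q}_K-\bo{v}\rVert_{L^2(\sigma)}\lesssim h_K^{1/2}\lVert\nabla\bo{v}\rVert_{L^2(\omega_K)}$ for $\sigma\in\cF^{\ext}_K$; this is exactly what the paper's separate treatment of $e\in\cE^{\ext}_K$ and $\sigma\in\cF^{\ext}_K$ provides (it works with the canonical interpolate $\mathbb I_h\bo{q}$, which keeps nonzero boundary DOFs, and bounds its distance to $\mathbb I^a_h\bo{v}$ DOF by DOF). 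You identify the fracture as the delicate point --- which is indeed handled by the construction of $U_{\cK s}$ --- but the external boundary is the case your argument actually breaks on and it is not addressed.
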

\begin{proof}
Let $\bo{v}\in H^1_0(\O\setminus{\Gamma})^3$ and $K\in\cM$. Let 
$$
  \cN(K)\coloneqq \bigcup\limits_{s\in\cV_K}\bigcup\limits_{L\in\cK s}L
$$
be the patch around $K$ made of all the cells in $\cK s$ for each $s$ vertex of $K$. By definition of $\cK s$, we have $\bo{v}\in H^1(\cN(K))^3$. Denote by $\bo{q}$ the $L^2(\cN(K))^3$-projection of $\bo{v}$ on $\mathbb P^1(\cN(K))^3$. This projection enjoys the following approximation and stability properties \cite[Theorem 1.45]{hho-book}:
\begin{align}
\lVert \bo{q}-\bo{v}\rVert_{L^2(\cN(K))}&\lesssim h_K\lVert \nabla \bo{v}\rVert_{L^2(\cN(K))},\label{eq:stab_avgint.est2}\\
\lVert (\bo{q}-\bo{v})|_K\rVert_{L^2(\sigma)}&\lesssim h^{1/2}_K\lVert \nabla \bo{v}\rVert_{L^2(\cN(K))}\quad\forall\sigma\in\cF_K,\label{eq:stab_avgint.est3}\\
\lVert \nabla \bo{q}\rVert_{L^2(\cN(K))}&\lesssim \lVert \nabla \bo{v}\rVert_{L^2(\cN(K))}.\label{eq:stab_avgint.est4}
\end{align}

Recalling \cite[Lemma 3]{pjddr} and from Lemma \ref{lem:prop.reconstructions} we have $\up{Ke}\mathbb I_h\bo{q}=\bo{q}$, $\up{K\sigma}\mathbb I_h\bo{q}=\bo{q}$,  $\dgr{K}\mathbb I_h\bo{q}=\nabla \bo{q}$, and $\up{K}\mathbb I_h\bo{q}=\bo{q}$.
Thus, the definition \eqref{eq:stabK} of $\mathbb S_K(\cdot,\cdot)$ gives
\begin{equation}\label{eq:stab.consistency}
\mathbb S_K(\mathbb I_h\bo{q},\uvec{w}_h)=0\qquad\forall\uvec{w}_h\in\dgs.
\end{equation}
The definition \eqref{discrete_norm} of $\lVert \cdot\rVert_{1,K}$, then gives
\begin{equation}\label{eq:stab_avgint.est5}
\lVert \mI{q}\rVert_{1,K}=\lVert \nabla\bo{q}\rVert_{L^2(K)}\lesssim \lVert\nabla \bo{v}\rVert_{L^2(\cN(K))}.
\end{equation}
To prove \eqref{eq:stab_avgint.est1}, we bound each type of degree of freedom (vertex, edge, face, element), then use \eqref{eq:dofnorm} to conclude.

From \cite[Proposition 5.5]{pjddr}, for any $s\in\cV_K$ the following inequality holds:
\begin{align}\label{eq:stab_avgint.est6}
\lvert (\mI{q})_{\cK s}-(\aI{v})_{\cK s}\rvert\lesssim h^{-1/2}_K\lVert\nabla\bo{v}\rVert_{L^2(\cN(K))}.
\end{align}
If $e\in\cE^{\ext}_K$, then $(\aI{v})_{\cK e}=\bo{0}$ and $(\mI{q})_{\cK e}=\Pi^0_e(\bo{q})$. Take $\sigma\in\cF^\ext$ that contains $e$ and write
\begin{align*}
\lvert (\mI{q})_{\cK e}-(\aI{v})_{\cK e}\rvert&=\lvert \Pi^0_e(\bo{q})\rvert\leq \lVert \bo{q}\rVert_{L^\infty(e)}\leq \lVert \bo{q}\rVert_{L^\infty(\sigma)}\lesssim h^{-1}_\sigma\lVert \bo{q}\rVert_{L^2(\sigma)}\notag\\
&\lesssim h^{-1}_\sigma h^{1/2}_K\lVert \nabla \bo{q}\rVert_{L^2(\cN(K))}\lesssim h^{-1/2}_K\lVert \nabla \bo{v}\rVert_{L^2(\cN(K))},
\end{align*}
where we have used the definition of interpolation in the first inequality, the inverse Lebesgue inequality \cite[Lemma 1.25]{hho-book} for the third inequality, the discrete trace inequality \cite[Lemma 1.32]{hho-book} in the fourth inequality, and the mesh regularity assumption (to write $h_K\approx h_\sigma$) along with \eqref{eq:stab_avgint.est4} in the last inequality.

If $e=[s_1,s_2]\in \cE^\inte_K$, then \eqref{5.15b}, \eqref{5.16c}, and the fact that $\bo{q}$ is linear ensure that
\begin{equation*}
(\mI{q})_{\cK e}=\Pi^0_e(\bo{q})=\frac{1}{2\lvert U_{\mathcal K s_1}\rvert}\int_{U_{\mathcal K s_1}}\overline{\omega}_{\mathcal Ks_1}\bo{q}+\frac{1}{2\lvert U_{\mathcal K s_2}\rvert}\int_{U_{\mathcal K s_2}}\overline{\omega}_{\mathcal Ks_2}\bo{q}.
\end{equation*}
Hence
\begin{align*}
\lvert (\mI{q})_{\cK e}-(\aI{v})_{\cK e}\rvert&=\left\vert\sum_{j=1}^2 \frac{1}{2\lvert U_{\cK s_j}\rvert}\int_{U_{\cK s_j}}\overline{\omega}_{\cK s_j}(\bo{q}-\bo{v})\right\vert\notag\\
&\lesssim\sum_{j=1}^2\lvert U_{\cK s_j}\rvert^{-1/2}\lVert \bo{q}-\bo{v}\rVert_{L^2(U_{\cK s_j})}\lesssim h^{-1/2}_K\lVert\nabla \bo{v}\rVert_{L^2(\cN (K))},
\end{align*}
where the first inequality follows from the bound $\lvert \overline{\omega}_{\cK s_j}\rvert\lesssim 1$ and the Cauchy–Schwarz inequality, while the conclusion is obtained using $\lvert U_{\cK s_j}\rvert \gtrsim \lvert K\rvert$, $U_{\cK s_j}\subset \cN(K)$, and \eqref{eq:stab_avgint.est2}.

If $\sigma\in\cF^{\ext}_K$, then $(\aI{v})_{\cK \sigma}=\bo{0}$ and $(\mI{q})_{\cK \sigma}=\Pi^0_\sigma(\bo{q})$, so
\begin{equation*}
\lvert (\mI{q})_{\cK \sigma}-(\aI{v})_{\cK \sigma}\rvert=\lvert \Pi^0_\sigma(\bo{q})\rvert\leq \lVert \bo{q}\rVert_{L^\infty(\sigma)}
\lesssim h^{-1/2}_K\lVert \nabla \bo{v}\rVert_{L^2(\cN(K))}.
\end{equation*}

If $\sigma\in\cF^{\inte}_K$, then
\begin{equation*}
\lvert (\mI{q})_{\cK \sigma}-(\aI{v})_{\cK \sigma}\rvert=\left\vert \frac{1}{\lvert \sigma\rvert}\int_\sigma (\bo{q}-\bo{v})|_K \right\vert\lesssim \lvert h_\sigma\rvert^{-1}\lVert \bo{q}-\bo{v}\rVert_{L^2(\sigma)}\overset{\eqref{eq:stab_avgint.est3}}\lesssim h^{-1/2}_K\lVert \nabla \bo{v}\rVert_{L^2(\cN(K))}.
\end{equation*}

Finally, for the cell degree of freedom, we write
\begin{align}\label{eq:stab_avgint.est7}
\lvert (\mI{q})_{K}-(\aI{v})_{K}\rvert&=\left\vert \frac{1}{\lvert K\rvert}\int_K\bo{q}-\bo{v} \right\vert\lesssim \lvert h_K\rvert^{-3/2}\lVert \bo{q}-\bo{v}\rVert_{L^2(K)}\overset{\eqref{eq:stab_avgint.est2}}\lesssim h^{-1/2}_K\lVert \nabla \bo{v}\rVert_{L^2(\cN(K))}.
\end{align}
Gathering \eqref{eq:stab_avgint.est6}-\eqref{eq:stab_avgint.est7} and invoking the bound \eqref{eq:dofnorm} with $\uvec{w}_h=\mI{q}-\aI{v}$ yields
\begin{align}\label{eq:stab_avgint.est8}
\lVert \mI{q}-\aI{v}\rVert_{1,K}\lesssim \lVert\nabla \bo{v}\rVert_{L^2(\cN(K))}.
\end{align}
Combining \eqref{eq:stab_avgint.est5} and \eqref{eq:stab_avgint.est8}, squaring, summing over $K\in\cM$ and gathering the integrals by cells and using the mesh regularity to see that $\#\{K\in\cM\,:\,L\in\cN_K\}\lesssim 1$ for all $L\in\cM$, we infer
\begin{align*}
\lVert \aI{v}\rVert^2_{1,h}&\lesssim \sum_{K\in\cM}\lVert\nabla\bo{v}\rVert^2_{L^2(\cN(K))}\lesssim \lVert \nabla \bo{v}\rVert^2_{L^2(\O)}.
\qedhere
\end{align*}
\end{proof}
With the establishment of Proposition \ref{prop:stab.avginter}, we are now ready to prove the discrete inf-sup condition (Theorem \ref{thm:inf-sup}).

\begin{proof}[\textit{\textbf {Proof of Theorem~\ref{thm:inf-sup}:}}]
Let $\blambda_h\in\bo{M}_h$. Recalling the definition \eqref{eq:def.dis{-1/2}norm} of the ${H}^{-1/2}$-like norm, it suffices to prove that for each $\bo{v}\in {H}^1_0(\O\setminus\Gamma)^{3},$ there exists some $\uvec{v}_h\in\ddgs$ such that 
\begin{align}\label{eq:inf-sup}
\frac{\int_\Gamma \blambda_h\cdot\sjump{\uvec{v}_h}_h}{\lVert \uvec{v}_h\rVert_{1,h}}\gtrsim \frac{\int_{\Gamma} \blambda_h\cdot{\sjump{\bo{v}}}}{\lVert \bo{v}\rVert_{{H}^1(\O\setminus \Gamma)}}.
\end{align}
Let us consider $\uvec{v}_h=\aI{\bo{v}}$ with the interpolator defined by \eqref{3.44}.  
Since $\blambda_h$ is piecewise constant on $\cF_\Gamma$, we have
$$
\int_\Gamma \blambda_h\cdot\sjump{\uvec{v}_h}_h=\sum_{\sigma\in\cF_\Gamma}\int_\sigma \blambda_h\cdot\sjump{\aI{\bo{v}}}_\sigma\overset{\eqref{eq:discrete.jump},\eqref{eq:Pisigma.Pot},\eqref{5.16d}}=\sum_{\sigma\in\cF_{\Gamma}}\int_{\sigma}\blambda_h\cdot\sjump{\bo{v}}
=\int_{\Gamma}\blambda_h\cdot\sjump{\bo{v}}.
$$
Dividing throughout by $\lVert \uvec{v}_h\rVert_{1,h}=\lVert \aI{\bo{v}}\rVert_{1,h}$ and using \eqref{eq:stab_avgint.est1} to infer that $\lVert \aI{\bo{v}}\rVert_{1,h}\lesssim \lVert \bo{v}\rVert_{{H}^1(\O\setminus\Gamma)}$ proves \eqref{eq:inf-sup}.
\end{proof}

\subsection{Proof of abstract error bounds}

To derive the abstract error bound in Theorem \ref{thm:abstract.error.estimate}, we need to establish the following lemmas. 
\begin{lemma}[Bound on the discrete stress]
For all $\uvec{v}_{h}\in\ddgs$, it holds: 
\begin{equation}\label{eq:bound.sigma.h}
\norm{L^2(\O)}{\bsig_h(\uvec{v}_h)}\lesssim \sqrt{\mu_{2}} \left(1+\left(\frac{\nu}{1-2\nu}\right)^{\frac{1}{2}} \right) \norm{\mathrm{en},h}{\uvec{v}_h},
\end{equation}
where the hidden constant in $\lesssim$ is independent of $E$ and $\nu.$
\end{lemma}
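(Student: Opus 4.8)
The plan is to bound $\norm{L^2(\O)}{\bsig_h(\uvec{v}_h)}$ directly from the definition \eqref{eq:def.sigma.h}, by splitting the discrete stress into its strain part $\tfrac{E}{1+\nu}\be_h(\uvec{v}_h)$ and its volumetric part $\tfrac{E}{1+\nu}\tfrac{\nu}{1-2\nu}\DIV_h(\uvec{v}_h)\Id$, and controlling each of these two contributions separately by the energy norm $\norm{\mathrm{en},h}{\uvec{v}_h}$ via the explicit form \eqref{eq:def.inner.Eh} of the energy inner product.

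First I would apply the triangle inequality to \eqref{eq:def.sigma.h}, using the elementary identity $\norm{L^2(\O)}{\DIV_h(\uvec{v}_h)\Id}=\sqrt3\,\norm{L^2(\O)}{\DIV_h(\uvec{v}_h)}$ (the Frobenius norm of $c\,\Id$ being $\sqrt3\,|c|$), to get
\[ \norm{L^2(\O)}{\bsig_h(\uvec{v}_h)}\le \frac{E}{1+\nu}\,\norm{L^2(\O)}{\be_h(\uvec{v}_h)}+\sqrt3\,\frac{E}{1+\nu}\,\frac{\nu}{1-2\nu}\,\norm{L^2(\O)}{\DIV_h(\uvec{v}_h)}. \]
Then I would expand the energy inner product: since $\be_h(\uvec{v}_h):\Id=\DIV_h(\uvec{v}_h)$ and $\mathbb S_h(\uvec{v}_h,\uvec{v}_h)\ge0$ (it is a sum of squares, see \eqref{eq:stabK}), the definition \eqref{eq:def.inner.Eh} gives
\[ \norm{\mathrm{en},h}{\uvec{v}_h}^2\ge\int_\O\bsig_h(\uvec{v}_h):\be_h(\uvec{v}_h)=\frac{E}{1+\nu}\,\norm{L^2(\O)}{\be_h(\uvec{v}_h)}^2+\frac{E}{1+\nu}\,\frac{\nu}{1-2\nu}\,\norm{L^2(\O)}{\DIV_h(\uvec{v}_h)}^2. \]
Since $\nu\in[0,\tfrac12)$, both summands on the right are nonnegative, so each of them is bounded individually by $\norm{\mathrm{en},h}{\uvec{v}_h}^2$; equivalently, $\big(\tfrac{E}{1+\nu}\big)^{1/2}\norm{L^2(\O)}{\be_h(\uvec{v}_h)}\le\norm{\mathrm{en},h}{\uvec{v}_h}$ and $\big(\tfrac{E}{1+\nu}\big)^{1/2}\big(\tfrac{\nu}{1-2\nu}\big)^{1/2}\norm{L^2(\O)}{\DIV_h(\uvec{v}_h)}\le\norm{\mathrm{en},h}{\uvec{v}_h}$.

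The final step combines the two displays: in the first one I would write $\tfrac{E}{1+\nu}=\big(\tfrac{E}{1+\nu}\big)^{1/2}\big(\tfrac{E}{1+\nu}\big)^{1/2}$ and $\tfrac{E}{1+\nu}\tfrac{\nu}{1-2\nu}=\big(\tfrac{E}{1+\nu}\big)^{1/2}\big(\tfrac{\nu}{1-2\nu}\big)^{1/2}\cdot\big(\tfrac{E}{1+\nu}\big)^{1/2}\big(\tfrac{\nu}{1-2\nu}\big)^{1/2}$, apply the two bounds above to the second factor of each product, and bound the leftover factor by $\tfrac{E}{1+\nu}\le\mu_2$. This yields $\norm{L^2(\O)}{\bsig_h(\uvec{v}_h)}\le(1+\sqrt3)\sqrt{\mu_2}\,\big(1+(\tfrac{\nu}{1-2\nu})^{1/2}\big)\,\norm{\mathrm{en},h}{\uvec{v}_h}$, which is \eqref{eq:bound.sigma.h}. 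There is no genuine obstacle; the only point requiring care --- and the reason the bound carries the factor $(\tfrac{\nu}{1-2\nu})^{1/2}$ rather than $\tfrac{\nu}{1-2\nu}$, which would blow up faster as $\nu\to\tfrac12$ --- is to resist bounding $\tfrac{\nu}{1-2\nu}$ crudely and instead to absorb one of its square roots into the energy norm. Since the only dependence on $E$ is through $\tfrac{E}{1+\nu}\le\mu_2$ and the only numerical constant introduced is $\sqrt3$, the hidden constant is indeed independent of $E$ and $\nu$.
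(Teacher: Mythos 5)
Your proposal is correct and follows essentially the same route as the paper: triangle inequality on \eqref{eq:def.sigma.h}, a lower bound of $\norm{\mathrm{en},h}{\uvec{v}_h}$ by the strain and volumetric contributions separately (the paper uses $(a+b)^{1/2}\ge\tfrac12(a^{1/2}+b^{1/2})$ where you bound each summand by the total, which is the same elementary step), and absorption of one square root of $\tfrac{\nu}{1-2\nu}$ into the energy norm before invoking $\tfrac{E}{1+\nu}\le\mu_2$. No substantive difference.
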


\begin{proof}
First, recall the definition of $\bsig_h(\cdot)$ from \eqref{eq:def.sigma.h} to get
\begin{equation*}
    \bsig_h(\uvec{v}_h)= \frac{E}{1+\nu}\left(\be_h(\uvec{v}_h)+\frac{\nu}{1-2\nu} \DIV_h(\uvec{v}_h)\Id\right).
\end{equation*}
An application of the triangle inequality in the above equation leads to
\begin{equation}
\norm{L^2(\O)}{\bsig_h(\uvec{v}_h)} \lesssim \frac{E}{1+\nu}\left(\norm{L^2(\O)}{\be_h(\uvec{v}_h)}+\frac{\nu}{1-2\nu}\norm{L^2(\O)}{ \DIV_h(\uvec{v}_h)}\right). \label{l2_bd1}
\end{equation}
The definition of $\norm{\mathrm{en},h}{{\cdot}}$ from \eqref{eq:def.inner.Eh} and using $(a+b)^{1/2}\ge \frac12 (a^{1/2}+b^{1/2})$, we have
$$
\norm{\mathrm{en},h}{\uvec{v}_h}\ge \frac12 \left(\frac{E}{1+\nu}\right)^{\frac{1}{2}}
\left(\norm{L^2(\O)}{\be_h(\uvec{v}_h)}+\left(\frac{\nu}{1-2\nu}\right)^{\frac{1}{2}}\norm{L^2(\O)}{\DIV_h(\uvec{v}_h)}\right),
$$
which gives
$$
\norm{L^2(\O)}{\be_h(\uvec{v}_h)}+\left(\frac{\nu}{1-2\nu}\right)^{\frac{1}{2}}\norm{L^2(\O)}{\DIV_h(\uvec{v}_h)}\lesssim
\left(\frac{E}{1+\nu}\right)^{-\frac{1}{2}}\norm{\mathrm{en},h}{\uvec{v}_h}.
$$
Plugging this relation into \eqref{l2_bd1} leads to
$$
\norm{L^2(\O)}{\bsig_h(\uvec{v}_h)}
\lesssim \left(\frac{E}{1+\nu}\right)^{\frac{1}{2}}\left(1+\left(\frac{\nu}{1-2\nu}\right)^{\frac{1}{2}}\right)\norm{\mathrm{en},h}{\uvec{v}_h},
$$
The above inequality with $\frac{E}{1+\nu} \leq \mu_{2}$ implies \eqref{eq:bound.sigma.h}. 
\end{proof}

\begin{lemma}[Fortin interpolator]\label{lem:fortin}
Let $K\in\cM$ and $\bos{\xi}\in \P^1(K)^{3\times 3}$. Then, we have 
\begin{align*}
\int_K \big(\DIV(\bo{u})-\DIV_h(\mI{u})\big)\Id\colon\bos{\xi}=0.
\end{align*}
Consequently,\
\begin{equation}\label{eq:Pi1.proj.sigma}
\int_\Omega (\bsig(\bo{u})-\bsig_h(\mI{u}))\colon \bos{\xi}=\frac{E}{1+\nu}\int_\Omega \big(\be(\bo{u})-\be_h(\mI{u})\big)\colon\bos{\xi}.
\end{equation}
\end{lemma}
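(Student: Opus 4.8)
The plan is to reduce the claimed matrix identity to a scalar one and then exploit the very definition of the cell correction $\widetilde{\bo{u}}_K$. Since $\Id\colon\bos{\xi}=\tr(\bos{\xi})$ and, as $\bos{\xi}$ runs over $\P^1(K)^{3\times 3}$, $\tr(\bos{\xi})$ runs over all of $\P^1(K)$ (take e.g.\ $\bos{\xi}=\tfrac13\phi\,\Id$), the first assertion is equivalent to $\int_K\big(\DIV(\bo{u})-\DIV_h(\mI{u})\big)\phi=0$ for every $\phi\in\P^1(K)$. I would prove this by writing both sides of this scalar identity as a cell integral plus face integrals. For the discrete term, I apply the defining relation \eqref{eq:G1K} of $\G^1_K$ with the test matrix $\bos{\xi}=\phi\,\Id$; using $\DIV(\phi\,\Id)=\nabla\phi$, $(\phi\,\Id)\bo{n}_{K\sigma}=\phi\,\bo{n}_{K\sigma}$, and $\G^1_K\mI{u}\colon\Id=\tr(\G^1_K\mI{u})$ (which is the restriction to $K$ of $\DIV_h(\mI{u})$), this gives
\[
\int_K\DIV_h(\mI{u})\,\phi=-\int_K(\I_K\bo{u})\cdot\nabla\phi+\sum_{\sigma\in\cF_K}\int_\sigma\up{K\sigma}\mI{u}\cdot\phi\,\bo{n}_{K\sigma}.
\]
For the continuous term, a plain integration by parts on $K$ yields $\int_K\DIV(\bo{u})\,\phi=-\int_K\bo{u}\cdot\nabla\phi+\sum_{\sigma\in\cF_K}\int_\sigma(\bo{u}|_K)\cdot\phi\,\bo{n}_{K\sigma}$, which is licit under the regularity in force (so the trace of $\bo{u}$ on $\partial K$ makes sense).

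Subtracting the two displays, I would insert the definition \eqref{eq:def.Ih.K}, namely $\I_K\bo{u}=\Pi^0_K\bo{u}+\widetilde{\bo{u}}_K$, and use that $\nabla\phi$ is constant on $K$, so $\int_K(\bo{u}-\Pi^0_K\bo{u})\cdot\nabla\phi=0$ by definition of the $L^2(K)^3$-orthogonal projection $\Pi^0_K$. This leaves
\[
\int_K\big(\DIV(\bo{u})-\DIV_h(\mI{u})\big)\,\phi=\int_K\widetilde{\bo{u}}_K\cdot\nabla\phi+\sum_{\sigma\in\cF_K}\int_\sigma\big(\bo{u}|_K-\up{K\sigma}\mI{u}\big)\cdot\phi\,\bo{n}_{K\sigma},
\]
and the right-hand side vanishes: this is exactly the defining identity \eqref{eq:tildevK} of $\widetilde{\bo{u}}_K$, which holds for all $\phi\in\P^1(K)$. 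This is the only genuinely conceptual point — the cell correction was designed precisely to enforce this commutation between the continuous and discrete divergences — and I expect it to be the crux of the argument; the remaining steps are routine integration-by-parts bookkeeping.

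For the consequence \eqref{eq:Pi1.proj.sigma}, I would subtract the two constitutive laws: from the definition of $\bsig(\bo{u})$ in \eqref{eq:strong_form} and of $\bsig_h$ in \eqref{eq:def.sigma.h},
\[
\bsig(\bo{u})-\bsig_h(\mI{u})=\frac{E}{1+\nu}\Big[\big(\be(\bo{u})-\be_h(\mI{u})\big)+\frac{\nu}{1-2\nu}\big(\DIV(\bo{u})-\DIV_h(\mI{u})\big)\Id\Big].
\]
Testing this against $\bos{\xi}$ read cell-wise (so $\bos{\xi}|_K\in\P^1(K)^{3\times 3}$ for every $K\in\cM$) and summing over cells, the $\tfrac{\nu}{1-2\nu}$-term contributes $\sum_{K\in\cM}\tfrac{\nu}{1-2\nu}\int_K\big(\DIV(\bo{u})-\DIV_h(\mI{u})\big)\Id\colon\bos{\xi}$, which is zero by the first part of the lemma applied on each $K$; what remains is precisely \eqref{eq:Pi1.proj.sigma}.
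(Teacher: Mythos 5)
Your proof is correct and follows essentially the same route as the paper's: both reduce the identity to testing against the scalar polynomial $\phi=\tr\bos{\xi}$, apply the definition \eqref{eq:G1K} of $\G^1_K$ together with an integration by parts for the continuous divergence, cancel the $\bo{u}-\Pi^0_K\bo{u}$ term against the constant $\nabla\phi$, and invoke the defining relation \eqref{eq:tildevK} of $\widetilde{\bo{u}}_K$ to kill the remainder. The derivation of \eqref{eq:Pi1.proj.sigma} by subtracting the constitutive laws cell-wise is likewise the paper's argument.
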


\begin{proof}
Let $\bos{\xi}\in \P^1(K)^{3\times 3}$. We have $\tr\bos{\xi}\in \P^1(K)$. Thus,
\begin{align*}
\int_K (\tr \bos{\epsilon}{}&(\bo{u})-{\rm{div}}_h(\mI{u}))\Id\colon\bos{\xi}=\int_K (\tr \bos{\epsilon}(\bo{u})-\tr\bos{\epsilon}_h(\mI{u}))\tr\bos{\xi}\\
    &=\int_K (\bos{\epsilon}(\bo{u})-\bos{\epsilon}_h(\mI{u}))\colon(\tr\bos{\xi})\Id=\int_K (\nabla \bo{u}-\dgr{h}(\mI{u}))\colon(\tr\bos{\xi})\Id\\
\overset{\text{IBP}, \eqref{eq:G1K},\eqref{eq:def.Ih.K}}&=
\sum_{\sigma\in\cF_K}\int_\sigma \bo{u}|_K\cdot (\tr\bos{\xi})\bo{n}_{\Ksig}-\int_K \bo{u}\cdot\DIV((\tr\bos{\xi})\Id)+\int_K \left(\Pi^0_K\bo{u} +\widetilde{\bo{u}}_K\right)\cdot\DIV((\tr\bos{\xi})\Id)\\
&
\qquad\qquad-\sum_{\sigma\in\cF_K}\int_\sigma \up{\Ksig}\mI{u}\cdot (\tr\bos{\xi})\bo{n}_{\Ksig}
\\
&
=\cancel{\int_K \left(\Pi^0_K\bo{u} -\bo{u}\right)\cdot\DIV((\tr\bos{\xi})\Id)}+\int_K \widetilde{\bo{u}}_K\cdot\DIV((\tr\bos{\xi})\Id)
\\
&\qquad+\sum_{\sigma\in\cF_K}\int_\sigma (\bo{u}|_K-\up{\Ksig}\mI{u})\cdot (\tr\bos{\xi})\bo{n}_{\Ksig}=0,
\end{align*}
where the cancellation in the fifth equation is justified by the definition of $\Pi^0_K$ and $\DIV((\tr \bos{\xi})\Id)\in \P^0(K)^3$ while, in the conclusion, we have used the definition \eqref{eq:tildevK} of $\widetilde{\bo{u}}_K$ with $\phi=\tr\bos{\xi}$, after noticing that $\DIV((\tr\bos{\xi})\Id)=\nabla \tr\bos{\xi}$.  
\end{proof}

\begin{lemma}[$L^2$-to-$H^{-1/2}$ projection stability]\label{lma_Pi_lambda_est}
Let $\bos{\chi}\in L^2(\Gamma)^{3}.$ Then, we have
\begin{equation*}
\lVert \bos{\chi}-\Pi^0_{\cF_\Gamma}\bos{\chi}\rVert_{-1/2,\Gamma}\lesssim h^{1/2}\lVert \bos{\chi}-\Pi^0_{\cF_\Gamma}\bos{\chi}\rVert_{L^2(\Gamma)}.
\end{equation*}
\end{lemma}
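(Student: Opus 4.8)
The plan is to unfold the definition \eqref{eq:def.dis{-1/2}norm} of $\lVert\cdot\rVert_{-1/2,\Gamma}$: fix an arbitrary $\bo{v}\in H^1_0(\O\setminus\Gamma)^3\setminus\{\bo{0}\}$ and estimate the numerator $\int_\Gamma(\bos{\chi}-\Pi^0_{\cF_\Gamma}\bos{\chi})\cdot\sjump{\bo{v}}$. Splitting this over the fracture faces $\sigma\in\cF_\Gamma$, the key observation is that, by definition of the $L^2$-orthogonal projection onto piecewise constants, $\bos{\chi}-\Pi^0_{\cF_\Gamma}\bos{\chi}$ has zero mean on each $\sigma$; hence on each face we may replace $\sjump{\bo{v}}$ by $\sjump{\bo{v}}-\Pi^0_\sigma\sjump{\bo{v}}$ before applying Cauchy--Schwarz, which yields
\begin{equation*}
\int_\Gamma(\bos{\chi}-\Pi^0_{\cF_\Gamma}\bos{\chi})\cdot\sjump{\bo{v}}=\sum_{\sigma\in\cF_\Gamma}\int_\sigma(\bos{\chi}-\Pi^0_{\cF_\Gamma}\bos{\chi})\cdot\big(\sjump{\bo{v}}-\Pi^0_\sigma\sjump{\bo{v}}\big)\le\sum_{\sigma\in\cF_\Gamma}\lVert\bos{\chi}-\Pi^0_{\cF_\Gamma}\bos{\chi}\rVert_{L^2(\sigma)}\,\lVert\sjump{\bo{v}}-\Pi^0_\sigma\sjump{\bo{v}}\rVert_{L^2(\sigma)}.
\end{equation*}

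The second step is the trace-approximation bound $\lVert\sjump{\bo{v}}-\Pi^0_\sigma\sjump{\bo{v}}\rVert_{L^2(\sigma)}\lesssim h_\sigma^{1/2}\big(\lVert\nabla\bo{v}\rVert_{L^2(K)}+\lVert\nabla\bo{v}\rVert_{L^2(L)}\big)$, where $K$ (resp.\ $L$) is the cell on the positive (resp.\ negative) side of $\sigma$, so that $\sjump{\bo{v}}=(\bo{v}|_K)|_\sigma-(\bo{v}|_L)|_\sigma$. Since $\Pi^0_\sigma\sjump{\bo{v}}$ is the best $L^2(\sigma)$-approximation of $\sjump{\bo{v}}$ by a constant vector, it suffices to bound $\lVert\sjump{\bo{v}}-\bo{c}\rVert_{L^2(\sigma)}$ for the particular choice $\bo{c}=\Pi^0_K(\bo{v}|_K)-\Pi^0_L(\bo{v}|_L)$; a triangle inequality then reduces the matter to bounding $\lVert\bo{v}|_K-\Pi^0_K(\bo{v}|_K)\rVert_{L^2(\sigma)}$ and the analogous term on $L$. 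This is a standard polytopal estimate: applying the continuous trace inequality on the star-shaped cell $K$ to $\bo{v}|_K-\Pi^0_K(\bo{v}|_K)$ together with the Poincaré--Wirtinger inequality $\lVert\bo{v}|_K-\Pi^0_K(\bo{v}|_K)\rVert_{L^2(K)}\lesssim h_K\lVert\nabla\bo{v}\rVert_{L^2(K)}$ gives $\lVert\bo{v}|_K-\Pi^0_K(\bo{v}|_K)\rVert_{L^2(\sigma)}\lesssim h_K^{-1/2}\lVert\bo{v}|_K-\Pi^0_K(\bo{v}|_K)\rVert_{L^2(K)}+h_K^{1/2}\lVert\nabla\bo{v}\rVert_{L^2(K)}\lesssim h_K^{1/2}\lVert\nabla\bo{v}\rVert_{L^2(K)}$, and the mesh regularity relations $h_K\approx h_L\approx h_\sigma$ finish this step.

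Finally I would combine the two: substituting the trace bound into the split, using $h_\sigma\le h$, and applying a discrete Cauchy--Schwarz in $\sigma$ to factor out $\lVert\bos{\chi}-\Pi^0_{\cF_\Gamma}\bos{\chi}\rVert_{L^2(\Gamma)}$, the remaining sum $\sum_{\sigma\in\cF_\Gamma}\big(\lVert\nabla\bo{v}\rVert_{L^2(K_\sigma)}+\lVert\nabla\bo{v}\rVert_{L^2(L_\sigma)}\big)^2$ is controlled by $\lVert\nabla\bo{v}\rVert_{L^2(\O\setminus\overline{\Gamma})}^2=\lVert\bo{v}\rVert_{H^1(\O\setminus\Gamma)}^2$, because mesh regularity ensures each cell borders only a bounded number of fracture faces. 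Dividing by $\lVert\bo{v}\rVert_{H^1(\O\setminus\Gamma)}$ and taking the supremum over $\bo{v}$ then gives the claimed estimate. I do not expect any serious obstacle here; the only point requiring mild care is the trace-approximation estimate on polytopal cells, which is classical once each cell is assumed star-shaped with respect to a ball of radius $\gtrsim h_K$, and the verification that no constant depends on $h$ or on the data, which is immediate since every inequality used scales correctly.
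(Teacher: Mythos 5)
Your proposal is correct and follows essentially the same route as the paper: exploit the face-wise orthogonality of $\bos{\chi}-\Pi^0_{\cF_\Gamma}\bos{\chi}$ to subtract $\Pi^0_\sigma\sjump{\bo{v}}$, then control $\lVert\sjump{\bo{v}}-\Pi^0_\sigma\sjump{\bo{v}}\rVert_{L^2(\sigma)}$ by $h_\sigma^{1/2}$ times the $H^1$-seminorms of $\bo{v}$ on the two adjacent cells via the trace-approximation property of $\Pi^0_K$, and conclude by summation over $\cF_\Gamma$ and the definition of $\lVert\cdot\rVert_{-1/2,\Gamma}$. The only (cosmetic) difference is that you keep the $\ell^2$ structure with a discrete Cauchy--Schwarz and a finite-overlap argument in the final summation, where the paper uses a cruder $\ell^1$ bound over the faces.
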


\begin{proof}
Let $\bo{v}\in {H}^1_0(\O\setminus\Gamma)^{3}\setminus\{0\}$. Then,
\begin{align*}
\sum_{\sigma\in\cF_\Gamma}\left\Vert \sjump{\bo{v}}-\Pi^0_{\sigma}\sjump{\bo{v}}\right\Vert_{L^2(\sigma)}
&\le\sum_{\sigma\in\cF_\Gamma}\sum_{K\in\cM_\sigma}\left\Vert\bo{v}|_K-\Pi^0_{\sigma}(\bo{v}|_K)\right\Vert_{L^2(\sigma)}\notag\\
&=\sum_{\sigma\in\cF_\Gamma}\sum_{K\in\cM_\sigma}\left\Vert\bo{v}|_K-\Pi^0_K\bo{v} - \Pi^0_{\sigma}(\bo{v}|_K-\Pi^0_K\bo{v})\right\Vert_{L^2(\sigma)}\notag\\  
&\le \sum_{\sigma\in\cF_\Gamma}\sum_{K\in\cM_\sigma}2\left\Vert\bo{v}|_K-\Pi^0_{K}\bo{v}\right\Vert_{L^2(\sigma)}\notag\\
    &\lesssim \sum_{\sigma\in\cF_\Gamma}h_\sigma^{1/2}\sum_{K\in\cM_\sigma}\lvert \bo{v}\rvert_{H^1(K)}\lesssim h^{1/2} \lVert \bo{v}\rVert_{{H}^1(\O\setminus\Gamma)}.
\end{align*}
In deriving these estimates, we invoke the definition of $\sjump{\bo{v}}$ and triangle inequalities (first line), use $\Pi^0_{\sigma}(\Pi^0_K\bo{v})=\Pi^0_K\bo{v}$ since $(\Pi^0_K\bo{v})|_\sigma\in\mathbb P^0(\sigma)^3$ (second line), apply the triangle inequality together with the $L^2(\sigma)$-boundedness of $\Pi^0_\sigma$ (third line), and the approximation properties of $\Pi^0_K$ given in \cite[Theorem 1.45]{hho-book} (fourth line). Now, using orthogonality of the projection operator $\Pi^0_{\cF_\Gamma}$, the Cauchy-Schwarz inequality, and the previous bound, we conclude
\begin{align*}
\int_\Gamma \big(\bos{\chi}-\Pi^0_{\cF_\Gamma}\bos{\chi}\big)\cdot\sjump{\bo{v}}&=\int_\Gamma \big(\bos{\chi}-\Pi^0_{\cF_\Gamma}\bos{\chi}\big)\cdot\big(\sjump{\bo{v}}-\Pi^0_{\cF_\Gamma}\sjump{\bo{v}}\big)\\
&\lesssim \lVert \bos{\chi}-\Pi^0_{\cF_\Gamma}\bos{\chi}\rVert_{L^2(\Gamma)}\bigg(\sum_{\sigma\in\cF_\Gamma}\left\Vert \sjump{\bo{v}}-\Pi^0_{\sigma}\sjump{\bo{v}}\right\Vert_{L^2(\sigma)}\bigg)\\
&\lesssim h^{1/2}\lVert \bos{\chi}-\Pi^0_{\cF_\Gamma}\bos{\chi}\rVert_{L^2(\Gamma)}\lVert \bo{v}\rVert_{{H}^1(\O\setminus\Gamma)}.
\end{align*}
The lemma follows from Definition \ref{def:H^{-1/2}}.
\end{proof}

\begin{proof}[\textit{\textbf {Proof of Theorem~\ref{thm:abstract.error.estimate}:}}]
We first notice that the regularity assumption ensures that $\bo{u}\in C^0_0(\overline{\O}\setminus\Gamma)^3$. Let us start by estimating the norm of $\uvec{u}_h-\mI{u}$. By definition \eqref{eq:def.inner.Eh} of the inner product corresponding to $\norm{\mathrm{en},h}{{\cdot}}$, we have
\begin{align}
\norm{\mathrm{en},h}{\uvec{u}_h-\mI{u}}^2&=\int_{\Omega}\bsig_h(\uvec{u}_h-\mI{u})\colon\be_h(\uvec{u}_h-\mI{u})+ \mu_{1}\mathbb S_h(\uvec{u}_h-\mI{u},\uvec{u}_h-\mI{u})\nonumber\\
  \overset{\eqref{eq:scheme.displacement}}&=\sum_{K\in\cM}\int_K\bo{f}\cdot(\bo{u}_K-\I_K{\bo{u}})-\int_\Gamma \blambda_h\cdot\sjump{\uvec{u}_h-\mI{u}}_h\nonumber\\
  &\quad-\int_{\Omega}\bsig_h(\mI{u})\colon\be_h(\uvec{u}_h-\mI{u})-\mu_{1}\mathbb S_h(\mI{u},\uvec{u}_h-\mI{u}).
  \label{eq:abstract.est1}
\end{align}
  The definition \eqref{eq:adjoint.consist} of $w_h$ together with $\bo{f}=-\DIV(\bsig(\bo{u}))$ and $\blambda=-\gamma^+_{\bo{n}}\bsig(\bo{u})$ yields
  \begin{align*}
  0 ={}&w_h(\bsig(\bo{u}),\uvec{u}_h-\mI{u})-\sum_{K\in\cM}\int_K \bo{f}\cdot (\bo{u}_K-\I_K\bo{u})+\int_\Gamma \Pi^0_{\cF_\Gamma}\blambda\cdot\sjump{\uvec{u}_h-\mI{u}}_h\\
  &  +\int_\Omega\bsig(\bo{u}):\be_h(\uvec{u}_h-\mI{u}).
  \end{align*}
  Adding this relation to \eqref{eq:abstract.est1} leads to
\begin{align}
  \norm{\mathrm{en},h}{\uvec{u}_h-\mI{u}}^2&=w_h(\bsig(\bo{u}),\uvec{u}_h-\mI{u})+\int_\Gamma (\Pi^0_{\cF_\Gamma}\blambda-\blambda_h)\cdot\sjump{\uvec{u}_h-\mI{u}}_h\notag\\
  &\quad+\int_{\Omega}\big(\bsig(\bo{u})-\bsig_h(\mI{u})\big)\colon\be_h(\uvec{u}_h-\mI{u})-\mu_{1}\mathbb S_h(\mI{u},\uvec{u}_h-\mI{u})\notag\\
  \overset{\eqref{eq:adjoint.consist},\eqref{eq:norm_energy},\eqref{eq:Pi1.proj.sigma}}&\leq \left(\frac{{\widetilde{C}}}{\mu_{1}}\right)^{\frac{1}{2}}\lVert\uvec{u}_h-\mI{u}\rVert_{\mathrm{en},h}\bos{\cW}_h(\bsig(\bo{u}))-\int_\Gamma (\Pi^0_{\cF_\Gamma}\blambda-\blambda_h)\cdot \sjump{\mI{u}}_h\notag\\
  &\quad + \frac{E}{1+\nu}\int_{\Omega}\big(\be(\bo{u})-\be_h(\mI{u})\big)\colon\be_h(\uvec{u}_h-\mI{u}) 
  -\mu_{1}\mathbb S_h(\mI{u},\uvec{u}_h-\mI{u}),
  \label{eq:abstract.est.u.x}
\end{align}
where in the last inequality we have used that $\bos{\epsilon}_K(\uvec{u}_h-\mI{u})\in\mathbb P^1(K)^{3\times 3}$ for all $K\in\cM$ and, since $\Pi^0_{\cF_\Gamma}\blambda\in\cC_{f,h}$, the relation \eqref{eq:scheme.lagrange} to write $\int_\Gamma (\Pi^0_{\cF_\Gamma}\blambda-\blambda_h)\cdot\sjump{\uvec{u}_h}_h\leq 0$.
Using the Cauchy--Schwarz inequality on the last two terms, and Young's inequality $\left(ab\leq a^2+b^2/4\right)$, we get 
\begin{align*}
\left(\frac{{\widetilde{C}}}{\mu_{1}}\right)^{1/2}\lVert\uvec{u}_h-\mI{u}\rVert_{\mathrm{en},h}\bos{\cW}_h(\bsig(\bo{u})) &\leq \frac{{\widetilde{C}}}{\mu_{1}}\bos{\cW}_h(\bsig(\bo{u}))^2+\frac{1}{4}\lVert\uvec{u}_h-\mI{u}\rVert_{\mathrm{en},h}^2
\end{align*}
while, for the stabilisation term,
\begin{align*}
\left|\mu_1\mathbb S_h(\mI{u},\uvec{u}_h-\mI{u})\right| &\leq \mu_1\mathbb S_h(\mI{u},\mI{u}) + \frac{\mu_1}{4}\mathbb S_h(\uvec{u}_h-\mI{u},\uvec{u}_h-\mI{u})\\
\overset{\mu_1\leq \mu_2}&\leq \mu_2\mathbb S_h(\mI{u},\mI{u}) + \frac{\mu_1}{4}\mathbb S_h(\uvec{u}_h-\mI{u},\uvec{u}_h-\mI{u}),
\end{align*}
and, for the symmetric gradient term,
\begin{align*}
\Bigg| \frac{E}{1+\nu}\int_{\Omega}{}&\big(\be(\bo{u})-\be_h(\mI{u})\big)\colon\be_h(\uvec{u}_h-\mI{u})\Bigg|\\
&\leq \frac{E}{1+\nu}\bigg(\lVert\be(\bo{u})-\be_h(\mI{u})\rVert_{L^2(\O)}^{2}+\frac{1}{4}\lVert\be_h(\uvec{u}_h-\mI{u})\rVert_{L^2(\O)}^{2}\bigg)\\
\overset{\frac{E}{1+\nu}\leq \mu_2}&\leq \mu_2\lVert\be(\bo{u})-\be_h(\mI{u})\rVert_{L^2(\O)}^{2}+\frac{E}{4(1+\nu)}\lVert\be_h(\uvec{u}_h-\mI{u})\rVert_{L^2(\O)}^{2}.
\end{align*} 
Plugging these three inequalities into \eqref{eq:abstract.est.u.x}, then using the definition \eqref{eq:primal.consist} of $\bos{\fC}_h$ and 
$$
\frac{E}{4(1+\nu)}\lVert\be_h(\uvec{u}_h-\mI{u})\rVert_{L^2(\O)}^{2}+\frac{\mu_1}{4}\mathbb S_h(\uvec{u}_h-\mI{u},\uvec{u}_h-\mI{u})
\overset{\eqref{eq:def.inner.Eh}}\leq \frac{1}{4}\norm{\mathrm{en},h}{ \uvec{u}_h-\mI{u}}^2,
$$ 
we obtain
\begin{equation}\label{eq:abstract.est3}
\norm{\mathrm{en},h}{ \uvec{u}_h-\mI{u}}^2\leq \frac{2\widetilde{C}}{\mu_{1}}\bos{\cW}_h(\bsig(\bo{u}))^2+2\mu_{2}\bos{\fC}_h(\bo{u},\mI{u})^2-2\int_\Gamma (\Pi^0_{\cF_\Gamma}\blambda-\blambda_h)\cdot\sjump{\mI{u}}_h.
\end{equation}
Let us now consider the last addend. For all $\sigma\in\cF_\Gamma$, the definition \eqref{eq:discrete.jump} of $\sjump{\cdot}_{\sigma}$ gives
\begin{align*}
\int_\sigma (\Pi^0_{\sigma}\blambda-\blambda_\sigma)\cdot\sjump{\mI{u}}_\sigma
&=
\int_\sigma (\Pi^0_{\sigma}\blambda-\blambda_\sigma)\cdot(\up{\Ksig}\mI{u}-\up{\Lsig}\mI{u}\big)\\
\overset{\eqref{eq:Pisigma.Pot},\eqref{eq:def.Ih.sigma}}&=
\int_\sigma (\Pi^0_{\sigma}\blambda-\blambda_\sigma)\cdot(\Pi^0_\sigma(\bo{u}|_K)-\Pi^0_\sigma(\bo{u}|_L)\big)\\
&=\int_\sigma (\Pi^0_{\sigma}\blambda-\blambda_\sigma)\cdot \sjump{\bo{u}},
\end{align*}
where the second and third equalities are further justified by $\Pi^0_{\sigma}\blambda-\blambda_\sigma\in\P^0(\sigma)^3$ together with the definition of $\Pi^0_{\sigma}$. Summing these equations over $\sigma\in\cF_\Gamma$, we obtain
\begin{align*}
-\int_\Gamma (\Pi^0_{\cF_\Gamma}\blambda-\blambda_h)\cdot\sjump{\mI{u}}_h
&=\int_\Gamma (\blambda_h-\Pi^0_{\cF_\Gamma}\blambda)\cdot \sjump{\bo{u}}\\
&=\underbrace{\int_\Gamma (\blambda_h-\blambda)\cdot\sjump{\bo{u}}}_{\le 0\text{ by \eqref{eq:model.contact} and $\blambda_h\in\cC_{f,h}\subset\cC_f$}} + \int_\Gamma(\blambda-\Pi^0_{\cF_\Gamma}\blambda)\cdot \sjump{\bo{u}}\\
&\le \int_\Gamma(\blambda-\Pi^0_{\cF_\Gamma}\blambda)\cdot \big(\sjump{\bo{u}} - \Pi^0_{\cF_\Gamma}\sjump{\bo{u}}\big),
\end{align*}
where, in the conclusion, we have introduced $\Pi^0_{\cF_\Gamma}\sjump{\bo{u}}$ using the definition of $\Pi^0_{\cF_\Gamma}\blambda$.
Plugging this upper bound into \eqref{eq:abstract.est3} leads to
\begin{equation*}
\norm{\mathrm{en},h}{ \uvec{u}_h-\mI{u}}^2
\lesssim \frac{1}{\mu_{1}}\bos{\cW}_h(\bsig(\bo{u}))^2+\mu_{2}\bos{\fC}_h(\bo{u},\mI{u})^2+\int_\Gamma (\blambda-\Pi^0_{\cF_\Gamma}\blambda)\cdot\big(\sjump{\bo{u}}-\Pi^0_{\cF_\Gamma}\sjump{\bo{u}}\big),
\end{equation*}
where $\lesssim$ absorbs the quantity $\max\{2\widetilde{C},2\}$ which is independent from $E$ and $\nu$. This proves the estimate \eqref{eq:abstract.error.estimate}.

We now turn to the error on the Lagrange multiplier. Using the definition of $w_h$ in \eqref{eq:adjoint.consist} (together with $\gamma_n^+\bsig(\bo{u})=-\blambda$) and the scheme \eqref{eq:scheme.displacement}, we have for all $\uvec{w}_h\in \ddgs$:
\begin{align*}
  \int_{\Gamma}{}&(\Pi^0_{\cF_\Gamma}\blambda-\blambda_h)\cdot\sjump{\uvec{w}_h}_h=-w_h(\bsig(\bo{u}),\uvec{w}_h)-\int_\O\big(\bsig(\bo{u})-\bsig_h(\uvec{u}_h)\big)\colon\be_h(\uvec{w}_h)+\mu_{1} \mathbb S_h(\uvec{u}_h,\uvec{w}_h)\\
  &=-w_h(\bsig(\bo{u}),\uvec{w}_h)-\int_\O\big(\bsig(\bo{u})-\bsig_h(\mI{u})\big)\colon\be_h(\uvec{w}_h)+\mu_{1}\mathbb S_h(\uvec{u}_h,\uvec{w}_h)\\
  &\qquad+\int_\O\big(\bsig_h(\uvec{u}_h)-\bsig_h(\mI{u})\big)\colon\be_h(\uvec{w}_h)\\
  \overset{\eqref{eq:Pi1.proj.sigma}}&=-w_h(\bsig(\bo{u}),\uvec{w}_h)-\frac{E}{1+\nu}\int_{\Omega}\big(\be(\bo{u})-\be_h(\mI{u})\big)\colon\be_h(\uvec{w}_h)+\mu_{1}\mathbb S_h(\mI{u},\uvec{w}_h)\\
  &\qquad+\int_\O\big(\bsig_h(\uvec{u}_h)-\bsig_h(\mI{u})\big)\colon\be_h(\uvec{w}_h)+\mu_{1}\mathbb S_h(\uvec{u}_h-\mI{u},\uvec{w}_h)\\
  \overset{\eqref{discrete_norm},\eqref{eq:bound.sigma.h}}&\lesssim -w_h(\bsig(\bo{u}),\uvec{w}_h)+\mu_{2}\lVert\nabla \bo{u}-\dgr{h}\mI{u}\rVert_{L^2(\O)}\lVert\uvec{w}_h\rVert_{1,h}
  \\
  &\quad +\mu_{2} \mathbb S_h(\mI{u},\mI{u})^{\frac12}\lVert\uvec{w}_h\rVert_{1,h}+ \sqrt{\mu_{2}}\left(1+\left(\frac{\nu}{1-2\nu}\right)^{\frac{1}{2}}\right) \lVert \uvec{u}_h-\mI{u}\rVert_{\mathrm{en},h}\lVert \uvec{w}_h\rVert_{1,h}
  \\&\quad +\mu_{1} \mathbb S_h(\uvec{u}_h-\mI{u},\uvec{u}_h-\mI{u})^{\frac12}\lVert\uvec{w}_h\rVert_{1,h}\\
  \overset{\eqref{eq:def.inner.Eh}} &\lesssim -w_h(\bsig(\bo{u}),\uvec{w}_h)+\lVert\uvec{w}_h\rVert_{1,h}\bigg[\mu_{2} \lVert\nabla \bo{u}-\dgr{h}\mI{u}\rVert_{L^2(\O)}\\
  &\quad\quad+\sqrt{\mu_{2}}\left(1+\left(\frac{\nu}{1-2\nu}\right)^{\frac{1}{2}}\right) \lVert\uvec{u}_h-\mI{u}\rVert_{\mathrm{en},h}
  + \sqrt{\mu_{2}}\lVert\uvec{u}_h-\mI{u}\rVert_{\mathrm{en},h}+\mu_{2} \mathbb S_h(\mI{u},\mI{u})^{\frac12}\bigg],
\end{align*}
where we have used $\mu_{1}\leq \mu_2$, $\frac{E}{1+\nu}\leq \mu_{2}$ and, to pass to the second line, we have introduced $\pm \bsig_h(\mI{u})\colon\be_h(\uvec{w}_h)$ in the second term of the first right-hand side. 
Using the discrete inf-sup condition \eqref{eq:def.inf-sup} and the definitions \eqref{eq:primal.consist} of $\bos{\fC}_h$ and  \eqref{eq:adjoint.consist} of $\bos{\cW}_h$, we infer
\begin{equation}\label{4.11_lm}
\lVert \Pi^0_{\cF_\Gamma}\blambda-\blambda_h\rVert_{-1/2,\Gamma}\lesssim \bos{\cW}_h(\bsig(\bo{u}))+\mu_{2}\bos{\fC}_h(\bo{u},\mI{u})+\sqrt{\mu_{2}}\left(1+\left(\frac{\nu}{1-2\nu}\right)^{\frac{1}{2}}\right)\lVert\uvec{u}_h-\mI{u}\rVert_{\mathrm{en},h}.
\end{equation}
 The bound \eqref{eq:abstract.error.estimate_lam} follows from the triangle inequality and  \eqref{4.11_lm}, which concludes the lemma. 
\end{proof}

\subsection{Proof of the error bound}

In this section, we derive explicit bounds for the adjoint consistency in Lemma \ref{lemma:adjoint_consistency}, the primal consistency in Lemma \ref{l4.4}, and two results on product of higher-order variations of the Lagrange multiplier and the displacement (Lemma \ref{lemma:normal_complement} and \ref{l5.9}). Finally, we use these lemmas to prove the error estimates of Theorem \ref{thm:error.estimate}.  

\begin{lemma}[Adjoint Consistency]\label{lemma:adjoint_consistency}
Recalling the definition \eqref{eq:adjoint.consist} of the adjoint consistency error $\bos{\cW}_h$, it holds that for all $ \bos{\chi}\in\bos{\Sigma} \cap H^s(\cM)^{3\times 3}$ for some $s\in(\frac{1}{2},\frac{3}{2})$,
\begin{align*}
\bos{\cW_h(\bos{\chi})}\lesssim h^{s}\left(\lvert  \bos{\chi}\rvert_{H^s(\cM)}+\lvert \gamma_\bo{n}^+\bos{\chi} \rvert_{H^{s-\frac{1}{2}}(\Gamma)}\right).
\end{align*}
\end{lemma}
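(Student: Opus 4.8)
The plan is to start from the definition \eqref{eq:adjoint.consist} of $w_h(\bos{\chi},\uvec{v}_h)$ and rewrite the volumetric term $-\int_\O\bos{\chi}\colon\be_h(\uvec{v}_h)$ by element-by-element integration by parts against the element gradient reconstruction. Since $\bos{\chi}$ is symmetric, $\bos{\chi}\colon\be_h(\uvec{v}_h)=\bos{\chi}\colon\dgr{h}\uvec{v}_h$; however, to exploit the reconstruction identity \eqref{eq:G1K} I will first project: writing $\bos{\chi}|_K=\bos{\Pi}^1_K\bos{\chi}+(\bos{\chi}-\bos{\Pi}^1_K\bos{\chi})$, the polynomial part $\bos{\Pi}^1_K\bos{\chi}\in\P^1(K)^{3\times 3}$ can be inserted into \eqref{eq:G1K} exactly, producing cell-unknown and face-reconstruction boundary terms $-\int_K\bo{v}_K\cdot\DIV(\bos{\Pi}^1_K\bos{\chi})+\sum_{\sigma\in\cF_K}\int_\sigma\up{K\sigma}\uvec{v}_h\cdot\bos{\Pi}^1_K\bos{\chi}\,\bo{n}_{K\sigma}$. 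The residual part $\int_K(\bos{\chi}-\bos{\Pi}^1_K\bos{\chi})\colon\be_h(\uvec{v}_h)$ is controlled by $h_K^s|\bos{\chi}|_{H^s(K)}\lVert\dgr{K}\uvec{v}_h\rVert_{L^2(K)}\lesssim h_K^s|\bos{\chi}|_{H^s(K)}\lVert\uvec{v}_h\rVert_{1,K}$ via the approximation properties of $\bos{\Pi}^1_K$ \cite[Theorem 1.45]{hho-book} and the definition \eqref{discrete_norm} of the local norm.

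Next I would combine the polynomial boundary terms with the remaining two terms in \eqref{eq:adjoint.consist}. The cell term $-\int_K\bo{v}_K\cdot\DIV(\bos{\Pi}^1_K\bos{\chi})$ together with $-\int_K\bo{v}_K\cdot\DIV\bos{\chi}$ combines into $-\int_K\bo{v}_K\cdot\DIV(\bos{\Pi}^1_K\bos{\chi}-\bos{\chi})$; since $\bo{v}_K=\Pi^0_K\up{K}\uvec{v}_h$ by \eqref{eq:PiK.Pot} and $\DIV$ commutes suitably, one integrates by parts once more to turn this into a jump-type term of the form $\int_K\nabla(\text{something})\colon(\bos{\chi}-\bos{\Pi}^1_K\bos{\chi})$ plus a boundary contribution, again estimated by $h_K^s|\bos{\chi}|_{H^s(K)}\lVert\uvec{v}_h\rVert_{1,K}$. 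For the face boundary terms, I split the sum over $\sigma\in\cF_K$ into interior faces and fracture faces. On an interior face $\sigma=K|L\notin\cF_\Gamma$ the normal components of $\bos{\Pi}^1_K\bos{\chi}$ and $\bos{\Pi}^1_L\bos{\chi}$ do not match, but both are close (in the appropriate $h_K^{s-1/2}$ sense, using trace approximation) to $\gamma_{\bo{n}}\bos{\chi}$, which is single-valued because $\bos{\chi}\in H_{\DIV}$; since $\up{K\sigma}\uvec{v}_h=\up{L\sigma}\uvec{v}_h$ on non-fracture faces, the leading single-valued part cancels between the two cells and what remains is $\sum_\sigma\int_\sigma(\bos{\Pi}^1_K\bos{\chi}-\bos{\chi})\bo{n}_{K\sigma}\cdot\up{K\sigma}\uvec{v}_h$ summed with the matching $L$ contribution, bounded by trace-approximation $\lVert(\bos{\Pi}^1_K\bos{\chi}-\bos{\chi})\bo{n}_{K\sigma}\rVert_{L^2(\sigma)}\lesssim h_K^{s-1/2}|\bos{\chi}|_{H^s(K)}$ against $\lVert\up{K\sigma}\uvec{v}_h\rVert_{L^2(\sigma)}$, which in turn is absorbed into $\lVert\uvec{v}_h\rVert_{1,K}$ (after subtracting $\bo{v}_K$, using \eqref{eq:vk.bound}, and a discrete trace inequality for the remaining piece). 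On the exterior faces $\sigma\in\cF^\ext$ one uses $\up{K\sigma}\uvec{v}_h=\bo{0}$ (from $\ddgs$), so there is no contribution.

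On fracture faces $\sigma\in\cF_\Gamma$, the boundary terms from the two sides $K$ and $L$ combine with the fracture term $\int_\Gamma\Pi^0_{\cF_\Gamma}(\gamma^+_{\bo{n}}\bos{\chi})\cdot\sjump{\uvec{v}_h}_h$ already present in \eqref{eq:adjoint.consist}. Using $\gamma^+_{\bo{n}}\bos{\chi}+\gamma^-_{\bo{n}}\bos{\chi}=\bo{0}$ and the definition \eqref{eq:discrete.jump} of $\sjump{\uvec{v}_h}_\sigma=\up{K\sigma}\uvec{v}_h-\up{L\sigma}\uvec{v}_h$, the polynomial-boundary contributions rearrange to $\sum_{\sigma\in\cF_\Gamma}\int_\sigma\big(\bos{\Pi}^1_K\bos{\chi}\,\bo{n}_{K\sigma}+\gamma^+_{\bo{n}}\bos{\chi}\big)\cdot\up{K\sigma}\uvec{v}_h+(\text{$L$-analogue})-\int_\sigma\big(\Pi^0_\sigma\gamma^+_{\bo{n}}\bos{\chi}-\gamma^+_{\bo{n}}\bos{\chi}\big)\cdot\sjump{\uvec{v}_h}_\sigma$. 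The first type of term measures $\bos{\Pi}^1_K\bos{\chi}\,\bo{n}_{K\sigma}$ against the trace $\gamma^+_{\bo{n}}\bos{\chi}$ of the exact flux, estimated by $h_\sigma^{s-1/2}\big(|\bos{\chi}|_{H^s(K)}+|\gamma^+_{\bo{n}}\bos{\chi}|_{H^{s-1/2}(\sigma)}\big)$ using both the bulk trace approximation and the surface approximation of $\Pi^0_\sigma$ on $\Gamma$; the second type is bounded by the surface approximation $\lVert\gamma^+_{\bo{n}}\bos{\chi}-\Pi^0_\sigma\gamma^+_{\bo{n}}\bos{\chi}\rVert_{L^2(\sigma)}\lesssim h_\sigma^{s-1/2}|\gamma^+_{\bo{n}}\bos{\chi}|_{H^{s-1/2}(\sigma)}$ (valid since $s-\tfrac12\in(0,1)$). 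Each pairing against $\up{K\sigma}\uvec{v}_h$ (or $\sjump{\uvec{v}_h}_\sigma$) is then absorbed into $\lVert\uvec{v}_h\rVert_{1,\cM}$ exactly as on interior faces. Summing all contributions over $K\in\cM$, applying Cauchy--Schwarz over cells and faces, and dividing by $\lVert\uvec{v}_h\rVert_{1,h}$ yields the claimed bound. \textbf{The main obstacle} I anticipate is the bookkeeping on fracture faces: one must carefully track which reconstruction ($\up{K\sigma}$ vs.\ $\up{L\sigma}$) pairs with which sign of the flux, ensure the single-valued leading part of $\gamma_{\bo{n}}\bos{\chi}$ genuinely cancels against the exact $\gamma^+_{\bo{n}}\bos{\chi}$ in the scheme's fracture term (so that only the higher-order projection residuals survive), and handle the fact that $\up{K\sigma}\uvec{v}_h$ alone is not directly controlled by $\lVert\uvec{v}_h\rVert_{1,K}$ — one must subtract the cell value $\bo{v}_K$ and invoke \eqref{eq:vk.bound} together with a discrete trace inequality.
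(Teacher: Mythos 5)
Your plan follows essentially the same route as the paper's proof: project $\bos{\chi}$ cell-wise onto $\P^1(K)^{3\times 3}$, apply the gradient-reconstruction identity \eqref{eq:G1K}, exploit the single-valuedness of the normal traces and the continuity of the face reconstructions off the fracture (and their vanishing on $\cF^\ext$) so that only projection residuals survive, and bound these residuals against $\up{\Ksig}\uvec{v}_h-\bo{v}_K$ via \eqref{eq:vk.bound} and against the zero-mean part of $\up{\Ksig}\uvec{v}_h$ via \eqref{eq:gradPot.bound}, together with fractional trace/approximation estimates. Two minor remarks that do not affect validity: the volumetric residual $\int_K(\bos{\chi}-\bos{\Pi}^1_K\bos{\chi})\colon\be_h(\uvec{v}_h)$ is in fact exactly zero by orthogonality of the projector (since $\be_h(\uvec{v}_h)|_K\in\P^1(K)^{3\times 3}$), and on a fracture face the combined flux residual should be the difference $\Pi^0_\sigma(\gamma^+_{\bo{n}}\bos{\chi})-\bos{\Pi}^1_K\bos{\chi}\,\bo{n}_{\Ksig}$ rather than a sum.
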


\begin{proof}
Let $\uvec{v}_h\in\ddgs$, $\bc\in\bos{\Sigma}$ and $\bos{\chi}\in H^s(\cM)^{3\times 3}$, where $s\in(\frac{1}{2},\frac{3}{2})$. Since $\bos{\chi}$ is symmetric, we have $\bc\colon \epsilon_h(\uvec{v}_h)=\bc\colon \dgr{h}(\uvec{v}_h)$ and thus, by \eqref{eq:adjoint.consist},
\begin{align}\label{eq:adjoint.est2}
  w_h(\bos{\chi},\uvec{v}_h)&=-\sum_{K\in\cM}\int_K\bos{\Pi}^1_K\bc\colon \dgr{K}(\uvec{v}_h)+\int_\Gamma \Pi^0_{\cF_\Gamma}(\gamma^+_\bo{n}\bc)\cdot\sjump{\uvec{v}_h}_h-\sum_{K\in\cM}\int_K \bo{v}_K\cdot \DIV\bc\notag\\
  \overset{\eqref{eq:G1K},\eqref{eq:discrete.jump}}&=\sum_{K\in\cM}\int_K \bo{v}_K\cdot \DIV(\bos{\Pi}^1_K\bc)-\sum_{K\in\cM}\sum_{\sigma\in\cF_K}\int_{\sigma}\up{\Ksig}\uvec{v}_h\cdot\bos{\Pi}^1_K\bc\bo{n}_{\Ksig}\notag\\
    &\qquad+\sum_{\sigma\in{\cF_\Gamma}}\int_{\sigma=K|L} \Pi^0_{\sigma}(\bc\bo{n}_{\Ksig})\cdot\big(\up{\Ksig}{\uvec{v}_h}-\up{\Lsig}{\uvec{v}_h}\big)-\sum_{K\in\cM}\int_K \bo{v}_K\cdot \DIV\bc\notag\\
\overset{\text{IBP}}&=-\sum_{K\in\cM}\sum_{\sigma\in\cF_K}\int_\sigma\bo{v}_K\cdot(\bos{\chi}|_K-\bos{\Pi}^1_K\bos{\chi})\bo{n}_{\Ksig}-\sum_{K\in\cM}\sum_{\sigma\in\cF_K}\int_{\sigma}\up{\Ksig}\uvec{v}_h\cdot\bos{\Pi}^1_K\bc\bo{n}_{\Ksig}\notag\\
    &\qquad+\sum_{K\in\cM}\sum_{\sigma\in{\cF_K\cap\cF_\Gamma}}\int_{\sigma}\up{\Ksig}{\uvec{v}_h}\cdot {\Pi}^0_{\sigma}(\bc\bo{n}_\Ksig),
\end{align}
where the introduction of $\bos{\Pi}^1_K$ in the first line is justified by $\dgr{K}(\uvec{v}_h)\in\P^1(K)^{3\times 3}$ and we recall that, for all $\sigma\in \cF_\Gamma$, we have denoted by $K$ and $L$ the cells that contain $\sigma$ and are, respectively, on the positive and negative side of $\Gamma$. 
Since $\up{\Ksig}\uvec{v}_h=\up{\Lsig}\uvec{v}_h$ for all $\sigma=K|L\in \cF^\inte\setminus\cF_\Gamma$, $\up{\Ksig}\uvec{v}_h=0$ for all $\sigma\in\cF_K\cap\cF^\ext$, and the normal traces of $\bc$ are continuous across $\Gamma$, we have
\begin{equation}\label{eq:adjoint.est3}
\begin{aligned}\sum_{K\in\cM}\sum_{\sigma\in\cF_K}\int_{\sigma}\up{\Ksig}{\uvec{v}_h}\cdot \bc\bo{n}_{\Ksig}={}&\sum_{K\in\cM}\sum_{\sigma\in\cF_K\cap\cF_\Gamma}\int_{\sigma}\up{\Ksig}{\uvec{v}_h}\cdot \bc \bo{n}_{\Ksig}\\
&+\underbrace{\sum_{K\in\cM}\sum_{\sigma\in\cF_K\setminus\cF_\Gamma}\int_{\sigma}\up{\Ksig}{\uvec{v}_h}\cdot \bc \bo{n}_{\Ksig}}_{=0}.
\end{aligned}
\end{equation}
Introducing $\pm \sum_{K\in\cM}\sum_{\sigma\in\cF_K}\int_{\sigma}\up{\Ksig}{\uvec{v}_h}\cdot \bc\bo{n}_{\Ksig}$ in \eqref{eq:adjoint.est2} and invoking \eqref{eq:adjoint.est3}, we obtain
\begin{align}\label{eq:adjoint.est4}
w_h(\bos{\chi},\uvec{v}_h)={}&-\sum_{K\in\cM}\sum_{\sigma\in\cF_K}\int_\sigma \bo{v}_K\cdot \big(\bc|_K-\bos{\Pi}^1_K\bc\big)\bo{n}_{\Ksig}+\sum_{K\in\cM}\sum_{\sigma\in\cF_K}\int_{\sigma}\up{\Ksig}{\uvec{v}_h}\cdot \big(\bc|_K-\bos{\Pi}^1_K\bc\big)\bo{n}_{\Ksig}\notag\\
&+\sum_{K\in\cM}\sum_{\sigma\in{\cF_K\cap\cF_\Gamma}}\int_{\sigma}\up{\Ksig}{\uvec{v}_h}\cdot\big({\Pi}^0_{\sigma}(\bc\bo{n}_{\Ksig})-\bc\bo{n}_{\Ksig}\big)\notag\\
=& \underbrace{\sum_{K\in\cM}\sum_{\sigma\in\cF_K}\int_\sigma \big(\up{\Ksig}\uvec{v}_h-\bo{v}_K\big)\cdot \big(\bc|_K-\bos{\Pi}^1_K\bc\big)\bo{n}_{\Ksig}}_{I_1}\notag\\
&+\underbrace{\sum_{K\in\cM}\sum_{\sigma\in{\cF_K\cap\cF_\Gamma}}\int_{\sigma}\big(\up{\Ksig}{\uvec{v}_h}-\Pi^0_\sigma\up{\Ksig}\uvec{v}_h\big)\cdot\big( {\Pi}^0_{\sigma}(\bc\bo{n}_{\Ksig})-\bc\bo{n}_{\Ksig}\big)}_{I_2}.
\end{align}
Note that the introduction of $\Pi^0_\sigma\up{\Ksig}\uvec{v}_h$ in $I_2$ is justified by the definition of the orthogonal projector, which gives $\int_{\sigma}\Pi^0_\sigma\up{\Ksig}\uvec{v}_h\cdot\big( {\Pi}^0_{\sigma}(\bc\bo{n}_{\Ksig})-\bc\bo{n}_{\Ksig}\big)=0$. Using the Cauchy-Schwarz inequality, the fractional multiplicative trace inequality \cite[Remark $2.6$]{erndongbiharmonic},  and the approximation properties of $\bos{\Pi}^1_K$ and ${\Pi}^0_\sigma$, obtained by interpolating the estimates of \cite[Theorem 1.45]{hho-book} for integral Sobolev exponents and the mesh regularity assumption, we obtain
\begin{align*}
I_1&\lesssim  \sum_{K\in\cM}\bigg(\sum_{\sigma\in\cF_K}\lVert \bo{v}_K-\up{\Ksig}\uvec{v}_h\rVert_{L^2(\sigma)}\\
&\qquad\quad\qquad\times \big(h^{-\frac{1}{2}}_K\lVert \bc-\bos{\Pi}^1_K\bc\rVert_{L^2(K)}
+h^{\min(s-\frac{1}{2},\frac{1}{2})}_K\lvert \bc-\bos{\Pi}^1_K\bc\rvert_{H^{\min(s,1)}(K)}\big)\bigg)\\
&\lesssim\sum_{K\in\cM}\sum_{\sigma\in\cF_K}h^{-\frac{1}{2}}_K\lVert \bo{v}_K-\up{\Ksig}\uvec{v}_h\rVert_{L^2(\sigma)}h^s_K \lvert\bc\rvert_{H^s(K)}\\ 
&\lesssim\bigg(\sum_{K\in\cM}h^{2s}_K \lvert\bc\rvert^2_{H^s(K)}\bigg)^{1/2}\bigg(\sum_{K\in\cM}\bigg(\sum_{\sigma\in\cF_K}h^{-\frac{1}{2}}_K\lVert \bo{v}_K-\up{\Ksig}\uvec{v}_h\rVert_{L^2(\sigma)}\bigg)^2\bigg)^{1/2}\\
\overset{\eqref{eq:vk.bound}}&\lesssim h^s\lvert\bc\rvert_{H^s(\cM)}\bigg(\sum_{K\in\cM}\lVert\uvec{v}_h\rVert^2_{1,K}\bigg)^{1/2},
\end{align*}
and
\begin{align*}
I_2&\lesssim \sum_{K\in\cM}\sum_{\sigma\in\cF_K\cap\cF_\Gamma}h_{\sigma}\lVert \nabla_\sigma\up{\Ksig}\uvec{v}_h\rVert_{L^2(\sigma)}h^{s-\frac{1}{2}}_\sigma\lvert\gamma_\bo{n}^+\bc\vert_{H^{s-\frac{1}{2}}(\sigma)}\\
&\lesssim \bigg(\sum_{K\in\cM}\sum_{\sigma\in\cF_K\cap\cF_\Gamma}h_{\sigma}\lVert \nabla_\sigma\up{\Ksig}\uvec{v}_h\rVert_{L^2(\sigma)}^2\bigg)^{1/2}\bigg(\sum_{K\in\cM}\sum_{\sigma\in\cF_K\cap\cF_\Gamma}h^{2s}_\sigma\lvert\gamma_\bo{n}^+\bc\vert_{H^{s-\frac{1}{2}}(\sigma)}^2\bigg)^{1/2}\\
\overset{\eqref{eq:gradPot.bound}}&\lesssim h^s\lvert\gamma_\bo{n}^+\bc\vert_{H^{s-\frac{1}{2}}(\Gamma)}\bigg(\sum_{K\in\cM}\lVert\uvec{v}_h\rVert^2_{1,K}\bigg)^{1/2}.
\end{align*}
Substituting bounds of $I_1$ and $I_2$ in \eqref{eq:adjoint.est4} and using \eqref{eq:adjoint.consist} concludes the lemma.
\end{proof}

\begin{lemma}\label{l4.4} Let $\bo{u}\in H^1_0(\O\setminus\overline{\Gamma})^3\cap H^{\frac{3}{2}+\fr}(\cM)^3$ with $0<\fr\leq 1$. Then, the following estimate holds:
\begin{align}
\bos{\fC_h(\bo{u},\mathbb I_h\bo{u})}&\lesssim h^{\frac{1}{2}+\fr}\lvert\bo{u}\rvert_{H^{\frac{3}{2}+\fr}(\cM)}.\label{eq:primal.est1}
\end{align}
\end{lemma}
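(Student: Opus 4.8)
The plan is to argue cell by cell, exploiting the polynomial consistency of the reconstruction operators recorded in Lemma~\ref{lem:prop.reconstructions}. Fix $K\in\cM$, set $\bos{\pi}:=\Pi^2_K\bo{u}\in\mathbb P^2(K)^3$ and $\bo{w}:=\bo{u}|_K-\bos{\pi}$. Since $\mathbb I_h$ and $\dgr{K}$ are linear with $\dgr{K}\mathbb I_h\bos{\pi}=\nabla\bos{\pi}$ by \eqref{eq:Grad.interpolate}, while the $\mathbb P^2$-exactness of the edge, face and cell reconstructions makes $\mathbb S_K(\mathbb I_h\bos{\pi},\cdot)\equiv 0$ (by the same computation as for \eqref{eq:stab.consistency}, now applied to degree-two polynomials), I get $\mathbb S_K(\mI{u},\mI{u})=\mathbb S_K(\mI{w},\mI{w})$ and $\lVert\nabla\bo{u}-\dgr{K}\mI{u}\rVert_{L^2(K)}\le\lVert\nabla\bo{w}\rVert_{L^2(K)}+\lVert\dgr{K}\mI{w}\rVert_{L^2(K)}$. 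By the definition \eqref{discrete_norm} of $\lVert\cdot\rVert_{1,K}$, every $\bo{w}$-contribution is dominated by $\lVert\mI{w}\rVert_{1,K}$, so it suffices to prove
\[
\lVert\nabla\bo{w}\rVert_{L^2(K)}+\lVert\mI{w}\rVert_{1,K}\lesssim h_K^{\frac12+\fr}\lvert\bo{u}\rvert_{H^{\frac32+\fr}(K)},
\]
and then square, sum over $K\in\cM$ (using $h_K\le h$), and invoke \eqref{eq:primal.consist} to conclude \eqref{eq:primal.est1}. The first summand is immediate from the standard fractional approximation properties of $\Pi^2_K$ in the $H^1$-seminorm (gain of $\frac32+\fr-1$ derivatives), valid since $1\le\frac32+\fr\le\frac52$.

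For $\lVert\mI{w}\rVert_{1,K}$ I would invoke the DOF-based bound \eqref{eq:dofnorm}, which reduces the task to showing that each degree of freedom of $\mI{w}$ attached to $K$ is $\lesssim h_K^{\fr}\lvert\bo{u}\rvert_{H^{\frac32+\fr}(K)}$. Since $\fr>0$ we have $H^{\frac32+\fr}(K)\hookrightarrow C^0(\overline K)$, and the approximation properties of $\Pi^2_K$ together with the fractional multiplicative trace inequality \cite[Remark~2.6]{erndongbiharmonic} give $\lVert\bo{w}\rVert_{L^\infty(K)}\lesssim h_K^{\fr}\lvert\bo{u}\rvert_{H^{\frac32+\fr}(K)}$, $\lVert\bo{w}\rVert_{L^2(e)}\lesssim h_K^{\frac12+\fr}\lvert\bo{u}\rvert_{H^{\frac32+\fr}(K)}$, $\lVert\bo{w}\rVert_{L^2(\sigma)}\lesssim h_K^{1+\fr}\lvert\bo{u}\rvert_{H^{\frac32+\fr}(K)}$ and $\lVert\bo{w}\rVert_{L^2(K)}\lesssim h_K^{\frac32+\fr}\lvert\bo{u}\rvert_{H^{\frac32+\fr}(K)}$. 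The vertex DOF $\I_{\Ks}\bo{w}=\bo{w}(\bx_s)$, the edge DOF $\I_{\Ke}\bo{w}=\Pi^0_e\bo{w}$, the face DOF $\I_{\cK\sigma}\bo{w}=\Pi^0_\sigma\bo{w}$ and the term $\Pi^0_K\bo{w}$ then all inherit the bound $\lesssim h_K^{\fr}\lvert\bo{u}\rvert_{H^{\frac32+\fr}(K)}$ after a mesh-regularity rescaling. The delicate piece is the correction $\widetilde{\bo{w}}_K$ in $\I_K\bo{w}=\Pi^0_K\bo{w}+\widetilde{\bo{w}}_K$: testing \eqref{eq:tildevK} with $\phi(\bx)=\widetilde{\bo{w}}_K\cdot(\bx-\overline{\bx}_K)\in\mathbb P^1(K)$ and using $\lVert\phi\rVert_{L^\infty(\sigma)}\lesssim h_K\lvert\widetilde{\bo{w}}_K\rvert$ yields $\lvert\widetilde{\bo{w}}_K\rvert\lesssim h_K^{-1}\sum_{\sigma\in\cF_K}\lVert\bo{w}|_K-\up{\Ksig}\mI{w}\rVert_{L^2(\sigma)}$. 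Because $\up{\Ksig}\mathbb I_h\bos{\pi}=\bos{\pi}|_\sigma$, this face residual coincides with $\lVert\bo{u}|_K-\up{\Ksig}\mI{u}\rVert_{L^2(\sigma)}$ and is bounded by $\lVert\bo{w}\rVert_{L^2(\sigma)}+\lVert\up{\Ksig}\mI{w}\rVert_{L^2(\sigma)}$; the face reconstruction is in turn controlled by its face/edge/vertex DOFs via the estimates of \cite[Lemma~8]{Di-Pietro.Droniou.ea:24} already recalled in the proof of Lemma~\ref{l3.4}, which, together with the bounds above, gives $\lVert\bo{w}|_K-\up{\Ksig}\mI{w}\rVert_{L^2(\sigma)}\lesssim h_K^{1+\fr}\lvert\bo{u}\rvert_{H^{\frac32+\fr}(K)}$ and hence $\lvert\widetilde{\bo{w}}_K\rvert\lesssim h_K^{\fr}\lvert\bo{u}\rvert_{H^{\frac32+\fr}(K)}$.

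Feeding all these DOF bounds into \eqref{eq:dofnorm} produces $\lVert\mI{w}\rVert_{1,K}\lesssim h_K^{\frac12+\fr}\lvert\bo{u}\rvert_{H^{\frac32+\fr}(K)}$, which closes the local estimate; squaring, summing over $K\in\cM$ and using \eqref{eq:primal.consist} then proves \eqref{eq:primal.est1}. \textbf{The main obstacle} is precisely the cell correction $\widetilde{\bo{w}}_K$: it is defined only implicitly through \eqref{eq:tildevK}, whose right-hand side invokes the interpolate via its face reconstructions, so a self-contained bound forces the detour through the face-reconstruction DOF estimates of \cite[Lemma~8]{Di-Pietro.Droniou.ea:24} and the consistency identity $\up{\Ksig}\mathbb I_h\bos{\pi}=\bos{\pi}|_\sigma$. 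Everything else is a routine assembly of fractional polynomial-approximation and trace inequalities, of the same type already used in Lemma~\ref{lemma:adjoint_consistency}.
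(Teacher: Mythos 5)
Your proof is correct and follows essentially the same route as the paper's: approximate $\bo{u}$ cell-by-cell by a polynomial $L^2$-projection, use the polynomial exactness of the reconstructions (so that the gradient and stabilisation errors reduce to the interpolate of the remainder), invoke the DOF-based bound \eqref{eq:dofnorm} together with $\max_{\overline K}\lvert\bo{u}-\text{poly}\rvert\lesssim h_K^{\fr}\lvert\bo{u}\rvert_{H^{3/2+\fr}(K)}$, and handle the implicit cell correction $\widetilde{\bo{v}}_K$ by testing \eqref{eq:tildevK} against a linear function and controlling the face residual through the $L^2$/$L^\infty$ bounds on $\up{\Ksig}$ of \cite[Lemma 8]{Di-Pietro.Droniou.ea:24}. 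The only substantive difference is that you project onto $\P^2(K)^3$ while the paper uses the $\P^1(K)^3$ projection; your choice is actually the safer one, since the rate $h_K^{\frac32+\fr-s}$ claimed in \eqref{eq:primal.est2} exceeds what a degree-one projection can deliver once $\fr>1/2$, whereas all the exactness properties of Lemma \ref{lem:prop.reconstructions} hold at degree two anyway.
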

\begin{proof} We first note that the regularity assumption ensures that $\bo{u}\in C^0_0(\overline{\O}\setminus\Gamma)^3$. Let $K\in\cM$ and $\bo{q}$ be the $L^2(K)$-orthogonal projection of $\bo{u}$ on $\P^1(K)^3$. By the approximation properties of the polynomial projector \cite[Theorem 1.45]{hho-book} we have
\begin{equation}\label{eq:primal.est2}
\lvert \bo{u}-\bo{q}\rvert_{H^s(K)}\lesssim h^{\frac{3}{2}+\fr-s}\lvert \bo{u}\rvert_{H^{\frac{3}{2}+\fr}(K)},\quad\forall s\in\left[0,\frac{3}{2}+\fr\right].
\end{equation}
Applying the bound \cite[Eq.(5.110)]{hho-book} to $\bo{u}-\bo{q}$ yields
\begin{equation}\label{eq:est.u.q}
\max_{\overline{K}}\lvert \bo{u-q}\rvert\lesssim h^{\fr}_K\lvert \bo{u}\rvert_{H^{\frac{3}{2}+\fr}(K)}.
\end{equation}
Now, let us gather the degrees of freedom on $K$ of $\mI{u}-\mI{q}$.
\begin{equation}\label{eq:primal.est3}
\begin{aligned}
|(\mI{u}-\mI{q})_{\cK s}| &= |(\bo{u-q})(\bx_s)| \leq \max_{\overline{K}} |\bo{u-q}|,  \quad\forall s \in \cV_K,\\
|(\mI{u}-\mI{q})_{\cK e}| &= |\Pi^0_e((\bo{u-q})|_K)| \leq \max_{\overline{K}} |\bo{u-q}|, \quad \forall e \in \cE_K,\\
|(\mI{u}-\mI{q})_{\cK \sigma}| &= |\Pi^0_\sigma((\bo{u-q})|_K)| \leq \max_{\overline{K}} |\bo{u-q}|, \quad\forall \sigma \in \cF_K,\\
|(\mI{u}-\mI{q})_{K}| &= |\Pi^0_K(\bo{u-q}) + (\widetilde{\bo{u}}_K-\widetilde{\bo{q}}_K)| 
\leq \max_{\overline{K}} |\bo{u-q}| + |\widetilde{\bo{u}}_K|,
\end{aligned}
\end{equation}
where, in the last relation, we have used $\widetilde{\bo{q}}_K=0$ in virtue of the argument in Remark \ref{rem:def.interpolator}.
Let $f\in\P^1(K)$ be such that $\int_K f=0$ and $\widetilde{\bo{u}}_K=\nabla f.$ Since $\Pi^0_Kf=0$, by \eqref{eq:tildevK},
\begin{align*}
\norm{L^2(K)}{\widetilde{\bo{u}}_K}^2\le{}&\sum_{\sigma\in\cF_K}\norm{L^2(\sigma)}{\bo{u}|_K-\up{\Ksig}\mathbb I_h\bo{u}}\norm{L^2(\sigma)}{f}\\
\le{}&\sum_{\sigma\in\cF_K}\norm{L^2(\sigma)}{\bo{u}|_K-\up{\Ksig}\mathbb I_h\bo{u}}\norm{L^2(\sigma)}{f-\Pi^0_K f}\\
\le{}&\sum_{\sigma\in\cF_K}h_\sigma\norm{L^\infty(\sigma)}{\bo{u}|_K-\up{\Ksig}\mathbb I_h\bo{u}}h_K^{1/2}\norm{L^2(K)}{\nabla f}
\end{align*}
where we have used the trace approximation properties of $\Pi^0_K$. Simplify by $\norm{L^2(K)}{\nabla f}=\norm{L^2(K)}{\widetilde{\bo{u}}_K}$ and use \eqref{eq:Upsilon.sigma.interpolate} to introduce $\bo{q}-\up{\Ksig}\mathbb I_h\bo{q}=0$:
\begin{align*}
\norm{L^2(K)}{\widetilde{\bo{u}}_K}\le{}&h_K^{3/2}\sum_{\sigma\in\cF_K}\norm{L^\infty(\sigma)}{(\bo{u}-\bo{q})|_K-\up{\Ksig}\mI{(\bo{u}-\bo{q})}}.
\end{align*}
Then, we invoke \cite[(A.10)]{Di-Pietro.Droniou.ea:24} to get $\norm{L^\infty(\sigma)}{\up{\Ksig}\mI{(\bo{u}-\bo{q})}}\lesssim \max_{\overline{\sigma}}\big|\bo{u}|_K-\bo{q}\big|\le \max_{\overline{K}}|\bo{u}-\bo{q}|$,
which allows us to conclude
\begin{equation}\label{eq:primal.est4}
\lvert\widetilde{\bo{u}}_K\rvert\lesssim \max\limits_{\overline{K}}\lvert\bo{u-q}\rvert.
\end{equation}
Using Lemma \ref{l3.4} and \eqref{eq:primal.est3}–\eqref{eq:primal.est4}, it follows that
\begin{align*}
\left(\lVert \dgr{K}\mI{(u-q)}\rVert^2_{L^2(K)}+\mathbb S_K(\mI{u}-\mI{q},\mI{u}-\mI{q})\right)^{1/2}&=\lVert \mI{u}-\mI{q}\rVert_{1,K}\\
&\lesssim h^{1/2}_K \max\limits_{\overline{K}}\lvert \bo{u-q}\rvert\overset{\eqref{eq:est.u.q}}\lesssim h^{\frac{1}{2}+\fr}_K\lvert \bo{u}\rvert_{H^{\frac{3}{2}+\fr}(K)}.
\end{align*}
Using \eqref{eq:Grad.interpolate} and \eqref{eq:stab.consistency} in the above inequality, we get
\begin{align*}
\left(\lVert \dgr{K}\mI{u}-\nabla \bo{q}\rVert^2_{L^2(K)}+\mathbb S_K(\mI{u},\mI{u})\right)^{1/2}\lesssim h^{\frac{1}{2}+\fr}_K\lvert \bo{u}\rvert_{H^{\frac{3}{2}+\fr}(K)}.
\end{align*}
Recalling the definition \eqref{eq:primal.consist} of $\bos{\fC}_h$, the estimate \eqref{eq:primal.est1} follows by introducing $\pm\nabla\bo{u}$ in the left-hand side above, using a triangle inequality, the approximation property \eqref{eq:primal.est2} (with $s=1$), squaring, summing over $K$, using the bound $h_K\leq h$ and taking the square root. 
\end{proof}

\begin{lemma}\label{lemma:normal_complement}
Let $(\bo{u},\blambda)$ be the solution of \eqref{eq:model}, and assume that $\bo{u}\in H^1_0(\O\setminus\overline{\Gamma})^3\cap H^{\frac{3}{2}+\fr}(\cM)^3$ with $0<\fr\leq 1$ and $\sigma\in\cF_\Gamma$.  
Then, the following estimate holds:
\begin{equation}\label{eq:normal_compplement1.compact}
\int_{\sigma}(\lambda_{\bo{n}}-\Pi^0_{\sigma}\lambda_{\bo{n}})\big( \sjump{\bo{u}}_{\bo{n}}-\Pi^0_{\sigma}\sjump{\bo{u}}_{\bo{n}}\big)\lesssim h_\sigma^{1+2\fr}\big(\lvert \lambda_{\bo{n}}\rvert^2_{H^{\fr}(\sigma)}+\lvert\nabla \sjump{\bo{u}}_{\bo{n}}\rvert^2_{H^{\fr}(\sigma)}\big).
\end{equation}
\end{lemma}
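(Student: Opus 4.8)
The plan is to use the exact complementarity between $\bo u$ and $\blambda$: first to turn the left‑hand side of \eqref{eq:normal_compplement1.compact} into a product of averages, and then to exploit this complementarity again \emph{inside} a double integral over $\sigma\times\sigma$, where it kills the low–order contributions that would otherwise spoil the estimate. Recall that, since $(\bo u,\blambda)$ solves \eqref{eq:model}, $\blambda=-\gamma_{\bo n}^+\bsig(\bo u)$, so the normal components satisfy the pointwise relations $\lambda_{\bo n}\ge 0$, $\sjump{\bo u}_{\bo n}\le 0$ and $\lambda_{\bo n}\sjump{\bo u}_{\bo n}=0$ a.e.\ on $\Gamma$ (hence on $\sigma$), while the trace/regularity assumption on $\bo u$ gives $\sjump{\bo u}_{\bo n}\in H^{1+\fr}(\sigma)$ and thus $\nabla_\sigma\sjump{\bo u}_{\bo n}\in H^\fr(\sigma)$ (the tangential gradient on the flat face $\sigma$). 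Writing $\bar\lambda:=\Pi^0_\sigma\lambda_{\bo n}$ and $\bar w:=\Pi^0_\sigma\sjump{\bo u}_{\bo n}$ (constants on $\sigma$) and expanding the product, the term $\int_\sigma\lambda_{\bo n}\sjump{\bo u}_{\bo n}$ drops out and the rest collapses to
$$\int_{\sigma}(\lambda_{\bo n}-\Pi^0_\sigma\lambda_{\bo n})\big(\sjump{\bo u}_{\bo n}-\Pi^0_\sigma\sjump{\bo u}_{\bo n}\big)=-\lvert\sigma\rvert\,\bar\lambda\,\bar w=\frac{1}{\lvert\sigma\rvert}\Big(\int_\sigma\lambda_{\bo n}\Big)\Big(\int_\sigma\lvert\sjump{\bo u}_{\bo n}\rvert\Big),$$
the last equality using $\bar\lambda\ge 0$ and $\bar w\le 0$; in particular the quantity to bound is nonnegative.

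Next I would write $\big(\int_\sigma\lambda_{\bo n}\big)\big(\int_\sigma\lvert\sjump{\bo u}_{\bo n}\rvert\big)=\int_\sigma\int_\sigma\lambda_{\bo n}(x)\lvert\sjump{\bo u}_{\bo n}(y)\rvert\,\d x\,\d y$ and observe that, for a.e.\ $(x,y)\in\sigma\times\sigma$ on which the integrand is positive, one has $\lambda_{\bo n}(x)>0$ and $\sjump{\bo u}_{\bo n}(y)\neq 0$, so complementarity forces $\sjump{\bo u}_{\bo n}(x)=0$ and $\lambda_{\bo n}(y)=0$, and a standard property of Sobolev functions (the gradient vanishes a.e.\ on any level set) gives in addition $\nabla_\sigma\sjump{\bo u}_{\bo n}(x)=0$. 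Therefore, for a.e.\ $(x,y)\in\sigma\times\sigma$,
$$\lambda_{\bo n}(x)\lvert\sjump{\bo u}_{\bo n}(y)\rvert\le\lvert\lambda_{\bo n}(x)-\lambda_{\bo n}(y)\rvert\cdot\big\lvert\sjump{\bo u}_{\bo n}(y)-\sjump{\bo u}_{\bo n}(x)-\nabla_\sigma\sjump{\bo u}_{\bo n}(x)\cdot(y-x)\big\rvert,$$
the right‑hand side being $\ge 0$ and coinciding with the left‑hand side wherever the latter is positive. Along the segment $[x,y]$ — which stays in $\sigma$ up to the mesh‑regularity/star‑shapedness of $\sigma$, rerouting through a centre point if $\sigma$ is nonconvex — the first‑order remainder is bounded by $\lvert y-x\rvert\int_0^1\lvert\nabla_\sigma\sjump{\bo u}_{\bo n}(x+t(y-x))-\nabla_\sigma\sjump{\bo u}_{\bo n}(x)\rvert\,\d t$.

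Plugging this in, I would pull the $t$‑integral out by Tonelli and, for fixed $t$, apply Cauchy–Schwarz after splitting the kernel as $\tfrac{\lvert\lambda_{\bo n}(x)-\lambda_{\bo n}(y)\rvert}{\lvert x-y\rvert^{1+\fr}}\cdot\lvert x-y\rvert^{2+\fr}\lvert\nabla_\sigma\sjump{\bo u}_{\bo n}(x+t(y-x))-\nabla_\sigma\sjump{\bo u}_{\bo n}(x)\rvert$; recalling that on the two‑dimensional face $\sigma$ one has $\lvert v\rvert_{H^\fr(\sigma)}^2=\int_\sigma\int_\sigma\lvert v(x)-v(y)\rvert^2\lvert x-y\rvert^{-(2+2\fr)}\,\d x\,\d y$, this produces the factor $\lvert\lambda_{\bo n}\rvert_{H^\fr(\sigma)}$ times $h_\sigma^{2+\fr}\big(\int_\sigma\int_\sigma\lvert\nabla_\sigma\sjump{\bo u}_{\bo n}(x+t(y-x))-\nabla_\sigma\sjump{\bo u}_{\bo n}(x)\rvert^2\,\d x\,\d y\big)^{1/2}$ after using $\lvert x-y\rvert\le h_\sigma$. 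The substitution $z=t(y-x)$ (Jacobian $t^{-2}$ in dimension two) identifies this last integral with $t^{-2}$ times an increment integral of $\nabla_\sigma\sjump{\bo u}_{\bo n}$ over displacements $\lvert z\rvert\le t h_\sigma$, which is $\le (t h_\sigma)^{2+2\fr}\lvert\nabla_\sigma\sjump{\bo u}_{\bo n}\rvert_{H^\fr(\sigma)}^2$; hence it is $\le t^{2\fr}h_\sigma^{2+2\fr}\lvert\nabla_\sigma\sjump{\bo u}_{\bo n}\rvert_{H^\fr(\sigma)}^2$. Integrating the resulting $t^\fr$ over $(0,1)$ yields $\int_\sigma\int_\sigma\lambda_{\bo n}(x)\lvert\sjump{\bo u}_{\bo n}(y)\rvert\,\d x\,\d y\lesssim h_\sigma^{3+2\fr}\lvert\lambda_{\bo n}\rvert_{H^\fr(\sigma)}\lvert\nabla_\sigma\sjump{\bo u}_{\bo n}\rvert_{H^\fr(\sigma)}$; dividing by $\lvert\sigma\rvert\approx h_\sigma^2$ (mesh regularity) and using $ab\le\tfrac12(a^2+b^2)$ gives \eqref{eq:normal_compplement1.compact}.

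The hard part is the second step. A direct Cauchy–Schwarz applied to $\int_\sigma(\lambda_{\bo n}-\Pi^0_\sigma\lambda_{\bo n})(\sjump{\bo u}_{\bo n}-\Pi^0_\sigma\sjump{\bo u}_{\bo n})$ only yields $\lVert\nabla_\sigma\sjump{\bo u}_{\bo n}\rVert_{L^2(\sigma)}$ — a full norm, not the $H^\fr$‑seminorm — and any attempt to bound $\Pi^0_\sigma\lambda_{\bo n}$ and $\Pi^0_\sigma\sjump{\bo u}_{\bo n}$ separately introduces constants that blow up when the contact or the non‑contact portion of $\sigma$ is small. Recasting the product of averages as a double integral and using the \emph{pointwise} identity $\lambda_{\bo n}\sjump{\bo u}_{\bo n}=0$ together with the vanishing of \emph{both} $\sjump{\bo u}_{\bo n}$ and $\nabla_\sigma\sjump{\bo u}_{\bo n}$ on the contact set is precisely what allows replacing those full norms by the top‑order seminorms, with the sharp power $h_\sigma^{1+2\fr}$ and no dependence on the size of the contact zone. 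A secondary, purely technical, issue is to make the ``segment stays in $\sigma$'' argument rigorous for general polygonal faces, which is handled by the mesh‑regularity assumptions.
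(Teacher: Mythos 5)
Your argument is correct in substance, but it is a genuinely different route from the one the paper takes. The paper does not prove the key inequality from scratch: it splits $\sigma$ into the contact zone $\Gamma_{C\sigma}=\{\sjump{\bo{u}}_{\bo{n}}=0\}$ and the non-contact zone $\Gamma_{N\sigma}$, invokes the estimates of Drouet--Hild \cite{MR3357636} (Eq.~(33)--(34)) which give the bound with $h_\sigma^{1+2\fr}$ replaced by $h_\sigma^{2(1+\fr)}/\lvert\Gamma_{N\sigma}\rvert^{1/2}$ or $h_\sigma^{2(1+\fr)}/\lvert\Gamma_{C\sigma}\rvert^{1/2}$, and then removes the dependence on the zone measures by observing that one of the two zones must have measure at least $\lvert\sigma\rvert/2\gtrsim h_\sigma^2$ (with the degenerate case $\lvert\Gamma_{C\sigma}\rvert=0$ or $\lvert\Gamma_{N\sigma}\rvert=0$ handled separately via complementarity). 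Your proof is self-contained and, more importantly, symmetric in the two zones: the reduction of the left-hand side to $\frac{1}{\lvert\sigma\rvert}\big(\int_\sigma\lambda_{\bo{n}}\big)\big(\int_\sigma\lvert\sjump{\bo{u}}_{\bo{n}}\rvert\big)$ followed by the double-integral rewriting and the pointwise use of complementarity (including Stampacchia's theorem to kill $\nabla_\sigma\sjump{\bo{u}}_{\bo{n}}$ on the contact set) replaces both zero-order contributions by Gagliardo increments directly, so the factors $\lvert\Gamma_{C\sigma}\rvert^{-1/2}$, $\lvert\Gamma_{N\sigma}\rvert^{-1/2}$ never appear and no case distinction is needed. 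This is arguably cleaner and makes explicit why the estimate is insensitive to the size of the transition region.

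Two points deserve tightening if you were to write this out in full. First, the Taylor identity with integral remainder and the change of variables $z=t(y-x)$ require the segment $[x,y]$ (and the shifted points $x+z$) to stay where $\nabla_\sigma\sjump{\bo{u}}_{\bo{n}}$ is defined; ``rerouting through a centre point'' would break the remainder formula, so the honest fix is a Sobolev extension of $\sjump{\bo{u}}_{\bo{n}}$ to a ball of radius $\approx h_\sigma$ with seminorms controlled uniformly thanks to mesh regularity (or the Dupont--Scott averaged-Taylor machinery). Second, your argument as written covers $0<\fr<1$ only: at the endpoint $\fr=1$ the double-integral Gagliardo seminorm you use is not the $H^1$ seminorm appearing in the statement, so that case needs a separate (easier, integer-order Taylor) treatment. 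Neither issue affects the range actually used in Theorem~\ref{thm:error.estimate}.
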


\begin{proof}
Let $\Gamma_{C\sigma}\coloneqq\{x\in\sigma\colon\sjump{\bo{u}(x)}_{\bo{n}}=\bo{0}\}$ and $\Gamma_{N\sigma}\coloneqq\{x\in\Gamma\colon\sjump{\bo{u}(x)}_{\bo{n}}<\bo{0}\}$ be the contact and non-contact zones on $\sigma$.
If $|\Gamma_{N\sigma}|>0$ and $|\Gamma_{C\sigma}|>0$ then the arguments leading to \cite[Eq.~(33) and (34)]{MR3357636} show that \eqref{eq:normal_compplement1.compact} hold with $h_\sigma^{1+2\fr}$ replaced by either $\frac{h_\sigma^{2(1+\fr)}}{|\Gamma_{N\sigma}|^{1/2}}$ or $\frac{h_\sigma^{2(1+\fr)}}{|\Gamma_{C\sigma}|^{1/2}}$.
Since $\sigma=\Gamma_{N\sigma}\sqcup \Gamma_{C\sigma}$, one of $|\Gamma_{N\sigma}|$ or $|\Gamma_{C\sigma}|$ is larger than $|\sigma|/2\gtrsim h_\sigma^2$ (by mesh regularity assumption); picking the estimate corresponding to that largest set yields \eqref{eq:normal_compplement1.compact}.

If one of $|\Gamma_{N\sigma}|$ and $|\Gamma_{C\sigma}|$ vanishes, say $|\Gamma_{C\sigma}|=0$ to fix the ideas, then by complementarity condition we have $\lambda_{\bo{n}}=0$ on $\sigma$, so $\Pi^0_{\sigma}\lambda_{\bo{n}}=0$ and thus the left-hand side of \eqref{eq:normal_compplement1.compact} vanishes, making this inequality trivial.
\end{proof}

\begin{lemma}\label{l5.9}
Let $(\bo{u},\blambda)$ be the solution of \eqref{eq:model}, and consider real parameters $\fr$, $s_{\fr}$ and $t_\fr$ as in Theorem \ref{thm:error.estimate}. Then, 
\begin{equation}\label{eq:bound.u.lambda}
\int_{\Gamma}(\blambda-\Pi^0_{{\cF}_\Gamma}\blambda)\cdot\big( \sjump{\bo{u}}-\Pi^0_{\cF_\Gamma}\sjump{\bo{u}}\big)\lesssim  h^{1+2\fr}\big(\lvert \blambda\rvert^2_{H^{s_\fr}(\cF_\Gamma)}+\lvert \sjump{\bo{u}}\rvert^2_{H^{t_\fr}(\cF_\Gamma)}\big).
\end{equation}
\end{lemma}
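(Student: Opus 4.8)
The plan is to reduce \eqref{eq:bound.u.lambda} to face‑by‑face estimates and then to treat the friction ($g\ne 0$) and frictionless ($g=0$) regimes separately, only the latter genuinely requiring the complementarity estimate of Lemma~\ref{lemma:normal_complement}. Write the left‑hand side of \eqref{eq:bound.u.lambda} as $\sum_{\sigma\in\cF_\Gamma}\int_\sigma(\blambda-\Pi^0_\sigma\blambda)\cdot(\sjump{\bo{u}}-\Pi^0_\sigma\sjump{\bo{u}})$. Because each fracture face $\sigma$ is flat, the unit normal $\bo{n}^+$ is constant on $\sigma$, so $\Pi^0_\sigma$ commutes with the normal/tangential splitting; hence the integrand on $\sigma$ decomposes, with vanishing cross terms, into a normal part $(\lambda_{\bo{n}}-\Pi^0_\sigma\lambda_{\bo{n}})(\sjump{\bo{u}}_{\bo{n}}-\Pi^0_\sigma\sjump{\bo{u}}_{\bo{n}})$ plus a tangential part $(\blambda_\tau-\Pi^0_\sigma\blambda_\tau)\cdot(\sjump{\bo{u}}_\tau-\Pi^0_\sigma\sjump{\bo{u}}_\tau)$. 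Throughout I would use that $\bo{u}\in H^{\frac{3}{2}+\fr}(\cM)^3$ implies, by the trace theorem on each cell, $\sjump{\bo{u}}|_\sigma\in H^{1+\fr}(\sigma)^3$, and that $\blambda\in H^{s_\fr}(\Gamma)^3$ by hypothesis; since $\bo{n}^+$ is constant on $\sigma$, the normal and tangential components of both inherit the corresponding seminorm bounds.

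For the friction case I would not split, and instead bound each face integral directly by Cauchy--Schwarz and the $L^2$‑approximation properties of $\Pi^0_\sigma$ (licit because $0\le s_\fr=2\fr\le 1$ and $\sjump{\bo{u}}|_\sigma\in H^1(\sigma)^3$): $\int_\sigma(\blambda-\Pi^0_\sigma\blambda)\cdot(\sjump{\bo{u}}-\Pi^0_\sigma\sjump{\bo{u}})\le\norm{L^2(\sigma)}{\blambda-\Pi^0_\sigma\blambda}\,\norm{L^2(\sigma)}{\sjump{\bo{u}}-\Pi^0_\sigma\sjump{\bo{u}}}\lesssim h_\sigma^{2\fr}\lvert\blambda\rvert_{H^{2\fr}(\sigma)}\,h_\sigma\lvert\sjump{\bo{u}}\rvert_{H^1(\sigma)}$. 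Young's inequality, $h_\sigma\le h$, $t_\fr=1$, and summation over $\sigma\in\cF_\Gamma$ then give exactly the right‑hand side of \eqref{eq:bound.u.lambda}.

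For the frictionless case, the Tresca condition $\lvert\bo{T}_{\tau}(\bo{u})\rvert\le g=0$ forces $\bo{T}_{\tau}(\bo{u})=\bo{0}$, hence $\blambda_\tau=-\bo{T}_{\tau}(\bo{u})=\bo{0}$, so the tangential part vanishes and only the normal part survives. To this normal part I would apply Lemma~\ref{lemma:normal_complement} on each $\sigma$, obtaining the bound $h_\sigma^{1+2\fr}(\lvert\lambda_{\bo{n}}\rvert^2_{H^\fr(\sigma)}+\lvert\nabla\sjump{\bo{u}}_{\bo{n}}\rvert^2_{H^\fr(\sigma)})$, and then use $\lvert\lambda_{\bo{n}}\rvert_{H^\fr(\sigma)}\lesssim\lvert\blambda\rvert_{H^\fr(\sigma)}=\lvert\blambda\rvert_{H^{s_\fr}(\sigma)}$ together with $\lvert\nabla\sjump{\bo{u}}_{\bo{n}}\rvert_{H^\fr(\sigma)}\lesssim\lvert\sjump{\bo{u}}_{\bo{n}}\rvert_{H^{1+\fr}(\sigma)}\lesssim\lvert\sjump{\bo{u}}\rvert_{H^{1+\fr}(\sigma)}=\lvert\sjump{\bo{u}}\rvert_{H^{t_\fr}(\sigma)}$ (recalling $s_\fr=\fr$, $t_\fr=1+\fr$, and the constancy of $\bo{n}^+$ on $\sigma$). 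Summing over $\sigma\in\cF_\Gamma$ and using $h_\sigma\le h$ concludes this case.

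The delicate point is already encapsulated in Lemma~\ref{lemma:normal_complement}: in the frictionless regime a plain Cauchy--Schwarz bound would only deliver a factor $h_\sigma^{1+\fr}$ (since $\lambda_{\bo{n}}$ merely lies in $H^\fr$ with $\fr$ allowed up to $1$), so one must exploit the complementarity $\sjump{\bo{u}}_{\bo{n}}T_{\bo{n}}(\bo{u})=0$ — via the almost disjoint supports of $\lambda_{\bo{n}}$ and $\sjump{\bo{u}}_{\bo{n}}$ and a careful treatment of the contact/non‑contact transition — to recover the extra power $h_\sigma^\fr$. This is precisely why the two regimes are handled differently, the friction case being comparatively routine because $\fr$ is capped at $1/2$, so that $h_\sigma^{1+2\fr}$ is already reachable by Cauchy--Schwarz with $H^{2\fr}$‑approximation of $\Pi^0_\sigma$. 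The only remaining work is bookkeeping in the seminorm comparisons above and in the degenerate sub‑cases ($\fr=0$, or a face lying entirely in the contact or non‑contact zone), the latter already dispatched inside Lemma~\ref{lemma:normal_complement}.
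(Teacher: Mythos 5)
Your proof is correct and follows essentially the same route as the paper's: for $g\ne 0$ a direct Cauchy--Schwarz with the $L^2$-approximation properties of $\Pi^0_\sigma$ and Young's inequality, and for $g=0$ the reduction to the normal component (via $\blambda_\tau=\bo{0}$) followed by a face-by-face application of Lemma~\ref{lemma:normal_complement} and summation. The extra details you supply (constancy of $\bo{n}^+$ on each flat face, vanishing cross terms, the trace argument for $\sjump{\bo{u}}|_\sigma$) are correct and merely make explicit what the paper leaves implicit.
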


\begin{proof}
In case of frictionless contact ($g=0$), we have
\begin{align*}
\int_{\sigma}(\blambda-\Pi^0_{{\sigma}}\blambda)\cdot\big( \sjump{\bo{u}}-\Pi^0_{\sigma}\sjump{\bo{u}}\big)=\int_{\sigma}(\lambda_{\bo{n}}-\Pi^0_{\sigma}\lambda_{\bo{n}})\big( \sjump{\bo{u}}_{\bo{n}}-\Pi^0_{\sigma}\sjump{\bo{u}}_{\bo{n}}\big)\quad\forall \sigma\in\cF_\Gamma.
\end{align*}
By Lemma \ref{lemma:normal_complement} and since $t_\fr=1+\fr$ (as $g=0$), we infer
\begin{equation*}
\int_{\sigma}(\blambda-\Pi^0_{\sigma}\blambda)\cdot\big( \sjump{\bo{u}}-\Pi^0_{\sigma}\sjump{\bo{u}}\big)\lesssim h^{1+2\fr}\big(\lvert \blambda\rvert^2_{H^{\fr}(\sigma)}+\lvert \sjump{\bo{u}}\rvert^2_{H^{t_{\fr}}(\sigma)}\big)\quad\forall \sigma\in\cF_\Gamma.
\end{equation*}
Adding over all the faces of $\cF_\Gamma$ yields \eqref{eq:bound.u.lambda}.

In the case with friction, i.e., $g\not=0$, the estimate follows directly from the Cauchy-Schwarz inequality, approximation properties of the projection operator $\Pi^0_{\cF_\Gamma}$ \cite[Theorem 1.45]{hho-book}, and Young's inequality~(note that this simple approach is only possible because of the restriction $0\le\fr\le 1/2$, instead of the larger range $0\le \fr<1$ allowed by the finer estimate obtained above when $g=0$).
\end{proof}

\begin{proof}[\textit{\textbf {Proof of Theorem~\ref{thm:error.estimate}:}}]
From Theorem \ref{thm:abstract.error.estimate}, Lemma \ref{lemma:adjoint_consistency}, Lemma \ref{l4.4} and Lemma \ref{l5.9}, we obtain
\begin{align*}
\lVert\uvec{u}_h-\mI{u}\rVert_{\mathrm{en},h}\lesssim h^{\frac{1}{2}+\fr}\bigg[{}&\frac{1}{\sqrt{\mu_{1}}}\bigg(\lvert\bos{\sigma}(\bo{u})\rvert_{H^{\frac{1}{2}+\fr}(\cM)}+\lvert \blambda\rvert_{H^{\fr}(\Gamma)}\bigg)+\sqrt{\mu_{2}}\lvert\bo{u}\rvert_{H^{\frac{3}{2}+\fr}(\cM)}\\
&+\lvert \blambda\rvert_{H^{s_\fr}(\cF_\Gamma)}+\lvert \sjump{\bo{u}}\rvert_{H^{t_{\fr}}(\cF_\Gamma)}\bigg].
\end{align*}
This completes the bound of \eqref{eq:total.error}. Finally, the  bound \eqref{eq:total.error_lm} is obtained from \eqref{eq:abstract.error.estimate_lam}, Lemmas~\ref{lma_Pi_lambda_est}-\ref{l4.4}, and the approximation property \cite[Theorem 1.45]{hho-book} of $\Pi^0_{\cF_\Gamma}$.
This concludes the theorem.
\end{proof}
\section{Numerical Experiments}\label{sec numerical}
In this section, we assess the numerical convergence of the proposed discretisation for the poromechanical model with frictional contact at matrix-fracture interfaces. The first and second Lam\'e coefficients are related to  Young's modulus $E$ and Poisson's ratio $\nu$ by
$G:= \frac{E}{2(1+\nu)}, \quad L \coloneqq \frac{\nu E}{(1+\nu)(1-2\nu)},$ respectively.

The subsequent sections present a series of test cases. Sections 6.1–6.3 employ the same families of meshes having uniform Cartesian, tetrahedral, and hexahedral cells. Section~\ref{subsec.6.1} analyzes the convergence of the frictionless formulation using a 3D manufactured analytical solution with a single fracture. Section~\ref{subsec.6.2} focuses on a 3D configuration governed by the Tresca friction law, while Section~\ref{subsec.6.3} examines the performance of the method in nearly incompressible regimes for Lam\'e coefficient $L\in\{1, 10^4, 10^6\}$. Finally, Section~\ref{subsec.6.4} turns to a more challenging 3D Discrete Fracture–Matrix (DFM) model without an analytical solution, relying on tetrahedral meshes refined along the fracture network.

In all the following experiments, the fracture face–wise Lagrange multiplier enables the reformulation of the variational inequality~\eqref{eq:scheme.lagrange}, together with the constraint $\boldsymbol{\lambda}_h \in \mathcal{C}_{f,h}$, into a system of nonlinear equations. These equations involve the projection operators $[r]_{+} = \max(0, r)$ and $[\bos{\xi}]_g$=projection of $\bos{\xi} \in \mathbb{R}^3$ onto the ball centered at $0$ with radius $g$, and are written
\begin{align*}
\begin{cases}
\lambda_{h,\bo{n}}\coloneqq  [\lambda_{h,\bo{n}}+\beta_{h,\bo{n}}\sjump{\uvec{u}_h}_{h,\bo{n}}]_+,\\
\blambda_{h,\tau}\coloneqq [\blambda_{h,\tau}+\beta_{h,\tau}\sjump{\uvec{u}_h}_{h,\tau}]_g,
\end{cases}
\end{align*}
where $\beta_{h,\bo{n}}>0,$ and $\beta_{h,\tau}>0$ are face-wise constant quantities defined along $\Gamma$. The resulting nonlinear system is solved with a semi-smooth Newton algorithm. The stopping criteria is set to $10^{-12}$ on the relative residual, and the linear system at each iteration is solved using the Pardiso LU solver.

\subsection{3D manufactured solution for a frictionless contact~($g=0$) mechanical model}\label{subsec.6.1}

Following \cite{droniou2023bubble}, we consider the 3D domain $\Omega = (-1,1)^3$ with a single non-immersed fracture 
$\Gamma = \{0\} \times (-1,1)^2$. 
The friction coefficient $g$ is set to zero, corresponding to frictionless contact, 
and the Lam\'e coefficients are set to be $G = L = 1$. The exact solution is defined by
\begin{align*}
\bo{u}(x,y,z) =
\begin{cases}
\begin{pmatrix}
q(x,y)p(z)\\[4pt]
p(z)\\[4pt]
x^2p(z)
\end{pmatrix} & \text{if } z \ge 0,\\[12pt]
\begin{pmatrix}
h(x)p^{+}(z)\\[4pt]
h(x)\big(p^{+}(z)\big)'\\[4pt]
-\displaystyle\int_{0}^{x}h(\xi)\,d\xi\,\big(p^{+}(z)\big)'
\end{pmatrix} & \text{if } z<0,~x<0,\\[16pt]
\begin{pmatrix}
h(x)p^{-}(z)\\[4pt]
h(x)\big(p^{-}(z)\big)'\\[4pt]
-\displaystyle\int_{0}^{x}h(\xi)\,d\xi\,\big(p^{-}(z)\big)'
\end{pmatrix} & \text{if } z<0,~x\ge 0.
\end{cases}
\end{align*}
with
$q(x,y) = -\sin\left(\frac{\pi x}{2}\right)\cos\left(\frac{\pi y}{2}\right),~p(z) = z^2, ~h(x) = \cos\left(\frac{\pi x}{2}\right),~p^{+}(z) = z^4,\text{ and }p^{-}(z) = 2z^4.$ The function $\bo{u}$ is designed to satisfy the frictionless contact conditions 
at the matrix-fracture interface $\Gamma$. The right-hand side is $\bo{f} = -\DIV\bos{\sigma}(\bo{u})$ and the Dirichlet boundary conditions on $\partial\Omega$ are deduced from $\bo{u}$. Note that the fracture $\Gamma$ is in contact state for $z>0$ ($\sjump{\bo{u}}_{\bo{n}}=0$) and open for $z<0$, with a normal jump $\sjump{\bo{u}}_{\bo{n}} = -\min(z,0)^4$ depending only on $z$. 

This numerical experiment is performed on families of uniform Cartesian, tetrahedral, and hexahedral meshes. Starting from uniform Cartesian meshes, the hexahedral families -- hereafter called Hexa-cut are generated by random perturbations of the vertices (except those on the fracture) and by cutting non-planar faces into two triangles. We measure the $L^2$-norms of $\bo{u}-\up{h}(\uvec{u}_h)$, $\sjump{\bo{u}}-\sjump{\uvec{u}_h}_h$, $\nabla\bo{u}-\nabla\up{h}({\uvec{u}_h})$,\text{ and }$\lambda_{\bo{n}}-\lambda_{h,\bo{n}}$ versus the cubic root of the number of cells. Corresponding to these errors, the convergence rates of the DDR scheme are illustrated in Figure \ref{fig:6.1}. The results show that the convergence on the Lagrange multiplier is of order 1; this is coherent with the estimate \eqref{eq:total.error_lm} which predicts a rate of almost $3/2$ in discrete $H^{-1/2}$-like norm, and would naturally makes us expect a rate of almost $1$ in $L^2$-norm. For the displacement, \eqref{eq:total.error} predicts a rate slightly below $3/2$ in energy norm, which would translate into the same rate for the gradient; this is observed on tetrahedral and hexa-cut meshes, but Cartesian meshes seem to produce a super-convergence with a rate of almost $2$. The approximation of the displacement and its jump seems to be higher than those for the energy norm, albeit not by a full order: all three meshes indicate an order $2$ convergence on these measures.
Moreover, Figure~\ref{fig:6.3}(a) compares the $L^2$-norm of $\nabla\bo{u}-\nabla\up{h}(\uvec{u}_h)$ between the lowest-order method from \cite{jhr} and the current higher order scheme on tetrahedral meshes. The comparison reveals that, to achieve a given degree of accuracy, the scheme presented here requires significantly fewer degrees of freedom than the method in \cite{jhr}, highlighting the improved efficiency of the higher-order formulation.
\begin{figure}[ht]
\centering
\begin{tikzpicture}

\begin{groupplot}[
  group style={
    group size=2 by 1,
    horizontal sep=2.4cm,
    vertical sep=1.8cm,
  },
  width=0.45\textwidth,
  height=0.48\textwidth,
  xlabel={$N_{\text{cell}}^{1/3}$},
  ylabel={$L^2$ Error},
  ymode=log, xmode=log,
  legend style={
    at={(0.05,0.05)},
    anchor=south west,
    font=\small,
    draw=none, fill=none
  },
  every axis plot/.append style={thick, mark size=2pt}
]

\nextgroupplot[
    xmin=2, xmax=32,
    xtick={2,4,8,14,24,32},
xticklabels={2,4,8,14,24,32},
    ymin=1e-4, ymax=5,
    title={(a)}
]
\addplot[red, mark=*] coordinates {(2,3.025945e-01) (4,7.597039e-02) (8,1.722877e-02) (16,4.006409e-03) (32,9.814365e-04)};
\addlegendentry{$\mathbf{u}$}
\addplot[blue, mark=square*] coordinates {(2,9.283358e-01) (4,3.483605e-01) (8,9.565768e-02) (16,2.256112e-02) (32,5.335594e-03)};
\addlegendentry{$[\![\mathbf{u}]\!]$}
\addplot[green!60!black, mark=asterisk] coordinates {(2,4.062852e-01) (4,1.282431e-01) (8,3.403710e-02) (16,8.456262e-03) (32,2.099141e-03)};
\addlegendentry{$\nabla \mathbf{u}$}
\addplot[orange, mark=x] coordinates {(2,7.638315e-01) (4,4.228743e-01) (8,2.165096e-01) (16,1.088756e-01) (32,5.450681e-02)};
\addlegendentry{$\lambda_{\bo{n}}$}
\draw[thick] 
  (axis cs:22,3e-4) -- 
  (axis cs:11,3e-4) --
  (axis cs:11,1.2e-3) -- cycle;

\draw[densely dotted, thick] 
  (axis cs:22,3e-4) --
  (axis cs:11,3e-4) --
  (axis cs:11,6e-4) -- cycle;

\node[font=\scriptsize] at (axis cs:9,6e-4) {$\mathcal{O}(1)$};
\node[font=\scriptsize] at (axis cs:9,1.2e-3) {$\mathcal{O}(2)$};

\nextgroupplot[
    xmin=5.12992784, xmax=33.4362044, ymin=1e-4, ymax=10,
    title={(b)}
]
\addplot[red, mark=*] coordinates {(5.12992784,8.918814e-02) (7.7969745,3.590380e-02) (13.9897885,9.731805e-03) (20.3914548,3.881200e-03) (26.9500678,2.249882e-03) (33.4362044,1.367476e-03)};
\addlegendentry{$\mathbf{u}$}
\addplot[blue, mark=square*] coordinates {(5.12992784,3.544494e-01) (7.7969745,1.323711e-01) (13.9897885,5.174786e-02) (20.3914548,2.154133e-02) (26.9500678,1.283316e-02) (33.4362044,7.684108e-03)};
\addlegendentry{$[\![\mathbf{u}]\!]$}
\addplot[green!60!black, mark=asterisk] coordinates {(5.12992784,1.452727e-01) (7.7969745,7.015395e-02) (13.9897885,2.399698e-02) (20.3914548,1.105478e-02) (26.9500678,6.371187e-03) (33.4362044,4.038540e-03)};
\addlegendentry{$\nabla \mathbf{u}$}
\addplot[orange, mark=x] coordinates {(5.12992784,4.726740e-01) (7.7969745,2.797944e-01) (13.9897885,1.509177e-01) (20.3914548,1.008614e-01) (26.9500678,7.656991e-02) (33.4362044,6.218081e-02)};
\addlegendentry{$\lambda_{\bo{n}}$}
\draw[thick] 
  (axis cs:30,3e-4) -- 
  (axis cs:15,3e-4) --
  (axis cs:15,1.2e-3) -- cycle;

\draw[densely dotted, thick] 
  (axis cs:30,3e-4) --
  (axis cs:15,3e-4) --
  (axis cs:15,6e-4) -- cycle;

\node[font=\scriptsize] at (axis cs:13,6e-4) {$\mathcal{O}(1)$};
\node[font=\scriptsize] at (axis cs:13,1.2e-3) {$\mathcal{O}(2)$};

\end{groupplot}

\node at ($(group c1r1.south)!0.4!(group c2r1.south) - (0,6.0cm)$) {
  \begin{tikzpicture}
  \begin{axis}[
    width=0.45\textwidth,
    height=0.48\textwidth,
    xlabel={$N_{\text{cell}}^{1/3}$},
    ylabel={$L^2$ Error},
    ymode=log, xmode=log,
    every axis plot/.append style={thick, mark size=2pt},
    xmin=2, xmax=32,
    ymin=1e-4, ymax=5,
    title={(c)},
    legend style={
    at={(0.05,0.05)},
    anchor=south west,
    font=\small,
    draw=none, fill=none
  }
  ]
\addplot[red, mark=*] coordinates {(2,3.014315e-01) (4,8.080911e-02) (8,1.838843e-02) (16,4.279674e-03) (32,1.041480e-03)};
\addlegendentry{$\mathbf{u}$}
\addplot[blue, mark=square*] coordinates {(2,9.371215e-01) (4,3.037581e-01) (8,8.355406e-02) (16,2.163726e-02) (32,5.449991e-03)};
\addlegendentry{$[\![\mathbf{u}]\!]$}
\addplot[green!60!black, mark=asterisk] coordinates {(2,4.064416e-01) (4,1.449232e-01) (8,4.267688e-02) (16,1.156811e-02) (32,3.016451e-03)};
\addlegendentry{$\nabla \mathbf{u}$}
\addplot[orange, mark=x] coordinates {(2,7.618957e-01) (4,4.356406e-01) (8,2.214046e-01) (16,1.112583e-01) (32,5.631664e-02)};
\addlegendentry{$\lambda_{\bo{n}}$}

\draw[thick] 
  (axis cs:22,3e-4) -- 
  (axis cs:11,3e-4) --
  (axis cs:11,1.2e-3) -- cycle;

\draw[densely dotted, thick] 
  (axis cs:22,3e-4) --
  (axis cs:11,3e-4) --
  (axis cs:11,6e-4) -- cycle;

\node[font=\scriptsize] at (axis cs:9,6e-4) {$\mathcal{O}(1)$};
\node[font=\scriptsize] at (axis cs:9,1.2e-3) {$\mathcal{O}(2)$};
  \end{axis}
  \end{tikzpicture}
};

\end{tikzpicture}
\caption{(Test case from Section \ref{subsec.6.1}). Relative $L^2$-norm of errors $\bo{u}-\up{h}(\uvec{u}_h)$, $\sjump{\bo{u}}-\sjump{\uvec{u}_h}_h$, $\nabla\bo{u}-\nabla\up{h}({\uvec{u}_h})$,\text{ and }$\lambda_{\bo{n}}-\lambda_{h,\bo{n}}$ versus the cubic root of the number of cells, for (a) Cartesian, (b) tetrahedral, and (c) Hexa-cut mesh families.}
\label{fig:6.1}
\end{figure}
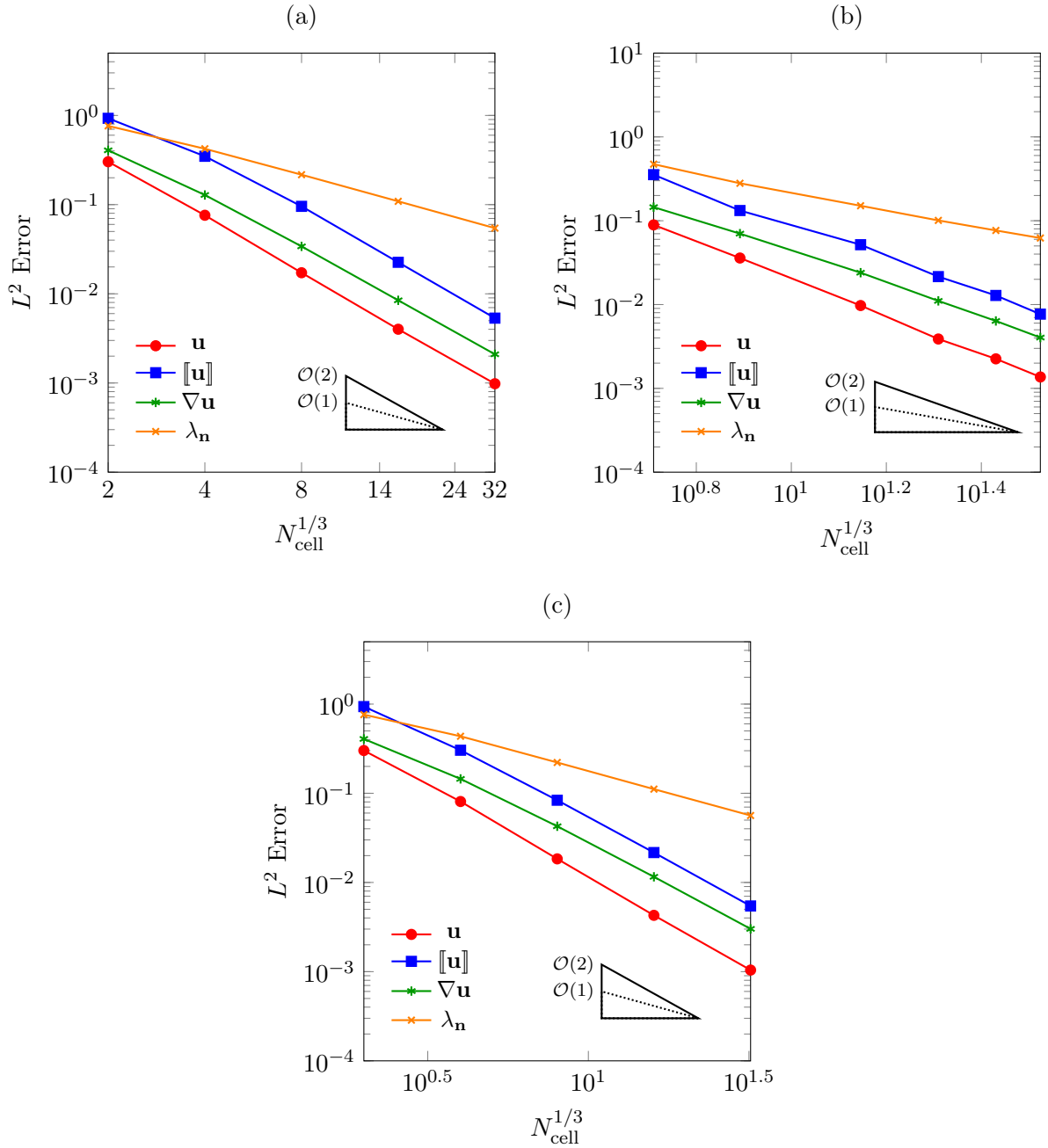

\subsection{3D manufactured solution for the Tresca friction model}\label{subsec.6.2}

We consider the same domain, fracture and Lamé coefficients as in Section \ref{subsec.6.1}, but the Tresca threshold is now set at $g=1$. Following \cite{jhr}, the exact solution is given by
\begin{align*}
\bo{u}(x,y,z) =
\begin{cases}
\begin{pmatrix}
h(x,y)P(z) - g y \\
P(z) \\
x^2 P(z)
\end{pmatrix} & \text{if } z \ge 0, \\[1.2em]
\begin{pmatrix}
h(x,y)Q(z) - g y \\
2Q(z) \\
x^2 Q(z)
\end{pmatrix} & \text{if } z < 0,~ x < 0, \\[1.2em]
\begin{pmatrix}
h(x,y)Q(z) - g y \\
Q(z) \\
x^2 Q(z)
\end{pmatrix} & \text{if } z < 0,~ x \ge 0,
\end{cases}
\end{align*}
with $h(x,y) = -\sin(x)\cos(y)$, $P(z) = z^2$, and $Q(z) = z^2/4$, 
and satisfies the Tresca frictional-contact conditions at the matrix–fracture interface $\Gamma$.The right-hand side $\mathbf{f} = -\operatorname{div}\boldsymbol{\sigma}(\mathbf{u})$ and the Dirichlet boundary conditions on $\partial\Omega$ are obtained from $\bo{u}$. The fracture $\Gamma$ is in slippy-contact for $z<0$ and sticky-contact for $z>0$. The convergence of the scheme is investigated on the same families of meshes as before, with the results given in Figure \ref{fig:6.2}. According to theoretical predictions, the displacement gradient $\nabla\bo{u}$ and the Lagrange multiplier $\lambda_{\bo{n}}$ should converge with order $1$ (see Theorem \ref{thm:error.estimate} and remember that $g\not= 0$ here). The numerical results show that, on all three mesh families, the approximation of the gradient is much better, reaching orders close to 2. 
The Lagrange multiplier variable converges with order 1, which is slightly better than the expected rate of 0.5 (\eqref{eq:total.error_lm} predicts a rate of 1 in discrete $H^{-1/2}$-norm, which would suggest a rate of 0.5 in $L^2$-norm). Given that the improved convergence occurs for a range of unstructured meshes, we believe that the reason for the discrepancy between the theoretical results and the observed rates of convergence is to be found in limitations of the analysis when $g\not=0$, and is not due to a specific behaviour of the scheme on these particular test cases.
  We note that, even for quadratic finite elements, we are not aware of an analysis for mixed formulations that proves an order $>1$ convergence in energy norm for the Tresca model with friction (the study in \cite{wohlmuthquadraticcontact} is carried out with $g=0$). On the other hand, such a result is proved for the Finite Element and Hybrid High Order discretisations based on Nitsche's formulation \cite{Chouly2014,Chouly2020}.

  For the displacement itself and its jump, we notice a rate of about 2 -- which is somewhat expected since Theorem \ref{thm:error.estimate} predicts an order 1 for the gradient -- and even larger than 2 on Cartesian meshes (this superconvergence is probably due to the symmetries of this particular mesh). Figure~\ref{fig:6.3}(b) demonstrates a similar efficiency gain for the higher order method over the lowest order case, consistent with the results from the test case in Section~\ref{subsec.6.1}.

\begin{figure}[ht]
\centering
\begin{tikzpicture}

\begin{groupplot}[
  group style={
    group size=2 by 1,
    horizontal sep=2.4cm,
    vertical sep=1.8cm,
  },
  width=0.45\textwidth,
  height=0.48\textwidth,
  xlabel={$N_{\text{cell}}^{1/3}$},
  ylabel={$L^2$ Error},
  ymode=log, xmode=log,
  legend style={
    at={(0.05,0.05)},
    anchor=south west,
    font=\small,
    draw=none, fill=none
  },
  every axis plot/.append style={thick, mark size=2pt}
]

\nextgroupplot[
    xmin=2, xmax=32,
    xtick={2,4,8,14,24,32},
xticklabels={2,4,8,14,24,32},
    ymin=1e-5, ymax=5,
    title={(a)}
]
\addplot[red, mark=*] coordinates {(2,8.705171e-02) (4,2.308235e-02) (8,4.884993e-03) (16,1.153743e-03) (32,2.864649e-04)};
\addlegendentry{$\mathbf{u}$}
\addplot[blue, mark=square*] coordinates {(2,1.995022e-01) (4,6.909518e-02) (8,1.360289e-02) (16,1.990550e-03) (32,2.626671e-04)};
\addlegendentry{$[\![\mathbf{u}]\!]$}
\addplot[green!60!black, mark=asterisk] coordinates {(2,1.870013e-01) (4,5.610256e-02) (8,1.313300e-02) (16,3.176445e-03) (32,7.874169e-04)};
\addlegendentry{$\nabla \mathbf{u}$}
\addplot[orange, mark=x] coordinates {(2,7.113770e-01) (4,3.739001e-01) (8,1.903817e-01) (16,9.561773e-02) (32,4.786010e-02)};
\addlegendentry{$\lambda_{\bo{n}}$}
\draw[thick] 
  (axis cs:22,3e-5) -- 
  (axis cs:11,3e-5) --
  (axis cs:11,1.2e-4) -- cycle;

\draw[densely dotted, thick] 
  (axis cs:22,3e-5) --
  (axis cs:11,3e-5) --
  (axis cs:11,6e-5) -- cycle;

\node[font=\scriptsize] at (axis cs:9,6e-5) {$\mathcal{O}(1)$};
\node[font=\scriptsize] at (axis cs:9,1.2e-4) {$\mathcal{O}(2)$};

\nextgroupplot[
    xmin=5.12992784, xmax=33.4362044, ymin=1e-5, ymax=10,
    title={(b)}
]
\addplot[red, mark=*] coordinates {(5.12992784,2.974890e-02) (7.7969745,1.716010e-02) (10.7606709,7.701544e-03) (17.2466912,2.632077e-03) (23.4134435, 1.378392e-03 ) (33.4362044,5.402174e-04)};
\addlegendentry{$\mathbf{u}$}
\addplot[blue, mark=square*] coordinates {(5.12992784,2.830399e-02) (7.7969745,1.536020e-02) (10.7606709,4.563126e-03) (17.2466912,2.208606e-03) (23.4134435, 8.232142e-04) (33.4362044,4.062463e-04)};
\addlegendentry{$[\![\mathbf{u}]\!]$}
\addplot[green!60!black, mark=asterisk] coordinates {(5.12992784,7.495378e-02) (7.7969745,4.458098e-02) (10.7606709,2.343235e-02) (17.2466912,8.633359e-03) (23.4134435, 5.183450e-03) (33.4362044,2.296086e-03)};
\addlegendentry{$\nabla \mathbf{u}$}
\addplot[orange, mark=x] coordinates {(5.12992784,4.076211e-01) (7.7969745,2.435796e-01) (10.7606709,1.709078e-01) (17.2466912,1.053065e-01) (23.4134435, 7.696846e-02) (33.4362044,5.489760e-02)};
\addlegendentry{$\lambda_{\bo{n}}$}
\draw[thick] 
  (axis cs:30,3e-5) -- 
  (axis cs:15,3e-5) --
  (axis cs:15,1.2e-4) -- cycle;

\draw[densely dotted, thick] 
  (axis cs:30,3e-5) --
  (axis cs:15,3e-5) --
  (axis cs:15,6e-5) -- cycle;

\node[font=\scriptsize] at (axis cs:13,6e-5) {$\mathcal{O}(1)$};
\node[font=\scriptsize] at (axis cs:13,1.2e-4) {$\mathcal{O}(2)$};

\end{groupplot}

\node at ($(group c1r1.south)!0.4!(group c2r1.south) - (0,6.0cm)$) {
  \begin{tikzpicture}
  \begin{axis}[
    width=0.45\textwidth,
    height=0.48\textwidth,
    xlabel={$N_{\text{cell}}^{1/3}$},
    ylabel={$L^2$ Error},
    ymode=log, xmode=log,
    every axis plot/.append style={thick, mark size=2pt},
    xmin=2, xmax=32,
    ymin=1e-5, ymax=5,
    title={(c)},
    legend style={
    at={(0.05,0.05)},
    anchor=south west,
    font=\small,
    draw=none, fill=none
  }
  ]
\addplot[red, mark=*] coordinates {(2,8.681604e-02) (4,2.277699e-02) (8,5.318409e-03) (16,1.241152e-03) (32,3.049070e-04)};
\addlegendentry{$\mathbf{u}$}
\addplot[blue, mark=square*] coordinates {(2,2.016279e-01) (4,7.496247e-02) (8,1.673511e-02) (16,3.441953e-03) (32,9.381447e-04)};
\addlegendentry{$[\![\mathbf{u}]\!]$}
\addplot[green!60!black, mark=asterisk] coordinates {(2,1.878127e-01) (4,6.480960e-02) (8,1.991956e-02) (16,5.485359e-03) (32,1.635492e-03)};
\addlegendentry{$\nabla \mathbf{u}$}
\addplot[orange, mark=x] coordinates {(2,7.096245e-01) (4,3.914234e-01) (8,1.937437e-01) (16,9.752187e-02) (32,4.950864e-02)};
\addlegendentry{$\lambda_{\bo{n}}$}

\draw[thick] 
  (axis cs:22,3e-5) -- 
  (axis cs:11,3e-5) --
  (axis cs:11,1.2e-4) -- cycle;

\draw[densely dotted, thick] 
  (axis cs:22,3e-5) --
  (axis cs:11,3e-5) --
  (axis cs:11,6e-5) -- cycle;

\node[font=\scriptsize] at (axis cs:9,6e-5) {$\mathcal{O}(1)$};
\node[font=\scriptsize] at (axis cs:9,1.2e-4) {$\mathcal{O}(2)$};
  \end{axis}
  \end{tikzpicture}
};

\end{tikzpicture}
\caption{(Test case from Section \ref{subsec.6.2}). Relative $L^2$-norm errors of $\bo{u}-\up{h}(\uvec{u}_h)$, $\sjump{\bo{u}}-\sjump{\uvec{u}_h}_h$, $\nabla\bo{u}-\nabla\up{h}({\uvec{u}_h})$,\text{ and }$\lambda_{\bo{n}}-\lambda_{h,\bo{n}}$ versus the cubic root of the number of cells, for (a) Cartesian, (b) tetrahedral, and (c) Hexa-cut mesh families.}
\label{fig:6.2}
\end{figure}
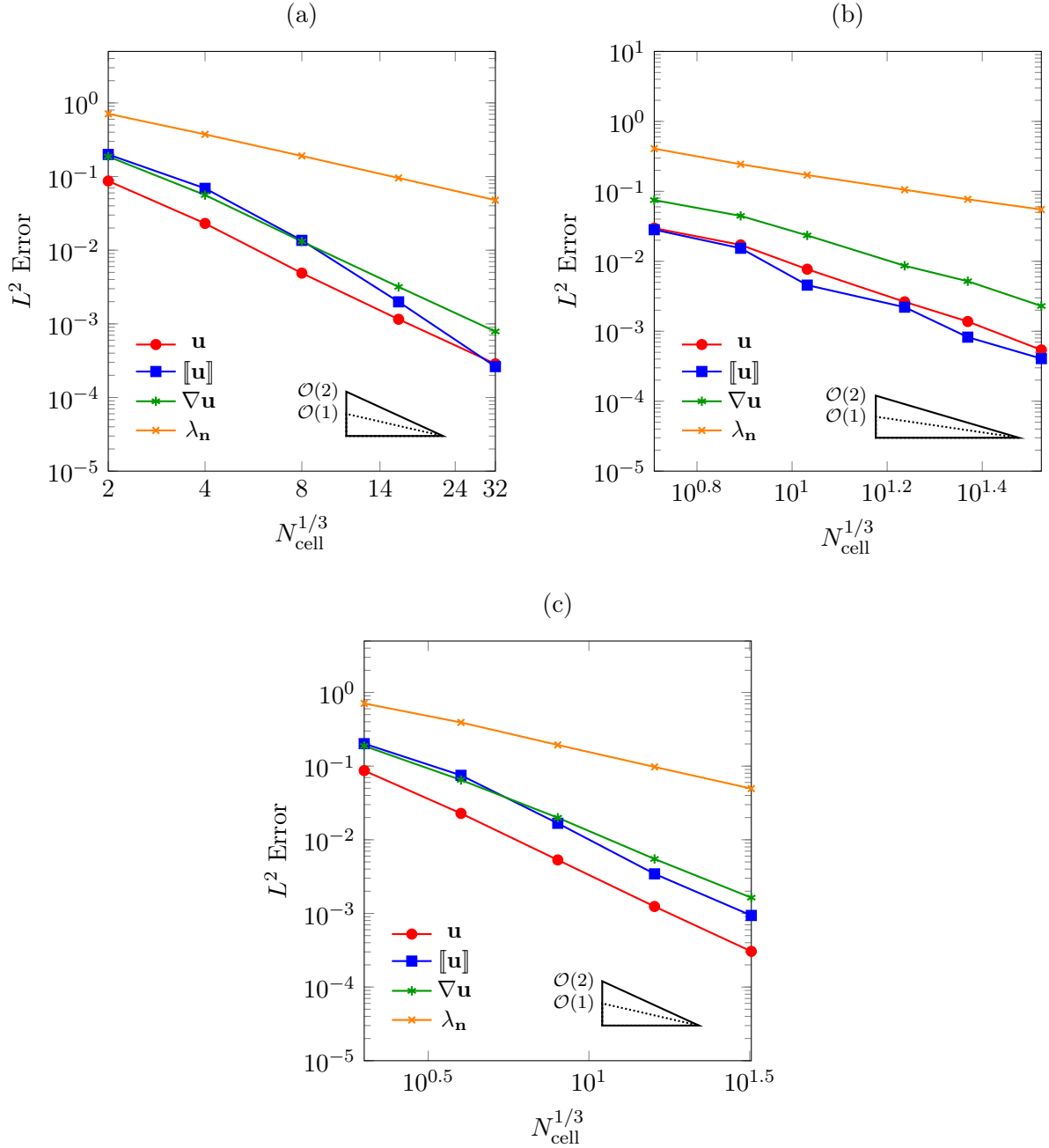

\begin{figure}[ht]
\centering
\begin{tikzpicture}

\begin{groupplot}[
  group style={
    group size=2 by 1,
    horizontal sep=2.4cm,
    vertical sep=1.8cm,
  },
  width=0.45\textwidth,
  height=0.42\textwidth,
  xlabel={Degrees of Freedom},
  ylabel={$L^2$ Error},
  ymode=log, xmode=log,
  legend style={
    at={(0.05,0.05)},
    anchor=south west,
    font=\small,
    draw=none, fill=none
  },
  every axis plot/.append style={thick, mark size=2pt}
]

\nextgroupplot[
    xmin=276, xmax=159654,
    ymin=3e-3, ymax=1.0,
    title={(a)}
]
\addplot[red, mark=*] coordinates {(276,0.61161119644314288) (831, 0.38884262267799291) (3450,0.23653459218442574) (8517,0.16701573411924253) (17169, 0.12910370518713715) (29742,0.10487672307848175)};
\addlegendentry{$\nabla\bo{u}\text{(lowest order)}$}
\addplot[blue, mark=square*] coordinates {(1518,1.452727e-01) (5526,7.015395e-02) (14700,4.007220e-02) (62784,1.514169e-02) (159654,8.268148e-03)};
\addlegendentry{$\nabla\bo{u}$}

\nextgroupplot[
    xmin=276, xmax=159654,
    ymin=3e-3, ymax=0.5,
    title={(b)}
    ]
\addplot[red, mark=*] coordinates {(276, 0.33687371211168049) (831, 0.21908415669739850) (1827,0.15243830615071899) (3450,0.12723724838566694) (5631,0.10025026489527322) (8517,8.6164671435958101E-002) (11964,7.4705570395644377E-002) (17169, 6.6126303860106914E-002) (29742,5.3074672806756526E-002)};
\addlegendentry{$\nabla\bo{u}\text{(lowest order)}$}
\addplot[blue, mark=square*] coordinates {(1518,7.495378e-02) (5526,4.458098e-02) (14700,2.343235e-02) (62784,8.633359e-03) (159654,5.183450e-03)};
\addlegendentry{$\nabla\bo{u}$}

\end{groupplot}
\end{tikzpicture}
\caption{((a) Test case from Section \ref{subsec.6.1} and (b) Test case from Section \ref{subsec.6.2}). Comparison of $L^2$-norm of $\nabla\bo{u}-\nabla\up{h}(\uvec{u}_h)$ of the lowest-order method \cite{jhr} with the current higher-order scheme for a tetrahedral mesh.  }
\label{fig:6.3}
\end{figure}
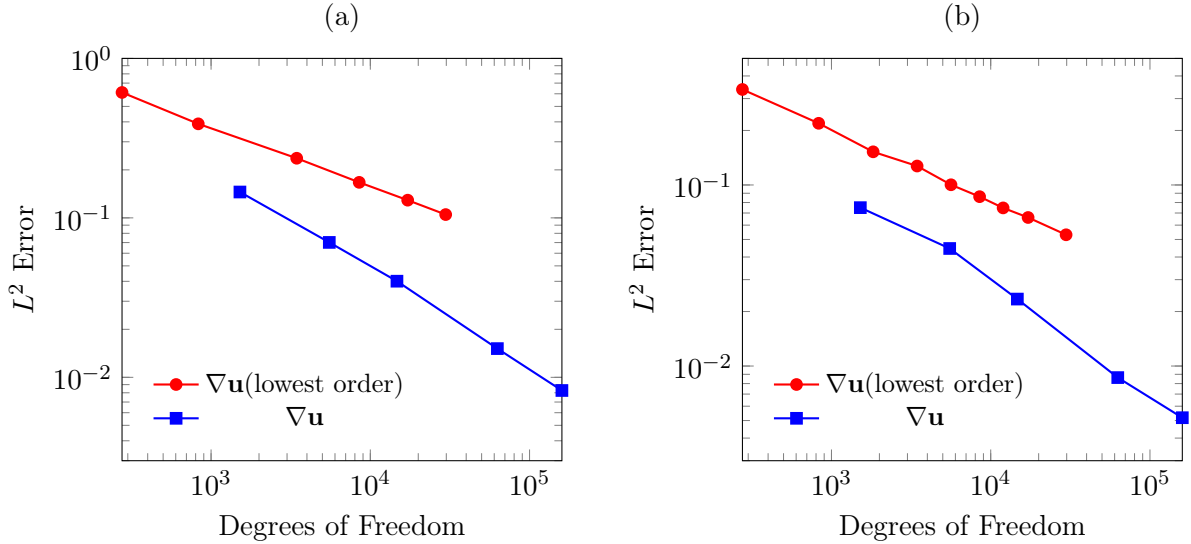

\subsection{3D manufactured solution to test incompressibility for the Tresca friction model}\label{subsec.6.3}
The geometry and mesh are the same as in the previous test cases, but the Lamé coefficient are now $G = 1$, $L\in\{1,10^4,10^6\}$, and the Tresca threshold $g$ is set to $1/L$. The exact solution is defined by
\begin{align*}
\bo{u}(x,y,z) =
\begin{cases}\begin{pmatrix}
x^3(\cos(y)+\sin(z)) \\
-3x^2\sin(y) \\
3x^2\cos(z)
\end{pmatrix}+\frac{1}{L}
\begin{pmatrix}
h(x,y)P(z) - y \\
P(z) \\
x^2 P(z)
\end{pmatrix} & \text{if } z \ge 0, \\[1.2em]
\begin{pmatrix}
x^3(\cos(y)+\sin(z)) \\
-3x^2\sin(y) \\
3x^2\cos(z)
\end{pmatrix}+\frac{1}{L}
\begin{pmatrix}
h(x,y)Q(z) - y \\
2Q(z) \\
x^2 Q(z)
\end{pmatrix} & \text{if } z < 0,~ x < 0, \\[1.2em]
\begin{pmatrix}
x^3(\cos(y)+\sin(z)) \\
-3x^2\sin(y) \\
3x^2\cos(z)
\end{pmatrix}+\frac{1}{L}
\begin{pmatrix}
h(x,y)Q(z) - y \\
Q(z) \\
x^2 Q(z)
\end{pmatrix} & \text{if } z < 0,~ x \ge 0,
\end{cases}
\end{align*}
with $h(x,y) = -\sin(x)\cos(y)$, $P(z) = z^2$, and $Q(z) = z^2/4$, 
is designed to satisfy the Tresca frictional-contact conditions at the matrix–fracture interface $\Gamma$.
The right-hand side $\mathbf{f} = -\operatorname{div}\boldsymbol{\sigma}(\mathbf{u})$ and the Dirichlet boundary conditions on $\partial\Omega$ are deduced from $\bo{u}$. As in section \ref{subsec.6.2}, we have that the fracture $\Gamma$ is in slippy-contact for $z<0$ and sticky-contact for $z>0$. Convergence of the scheme is examined on a family of meshes, with the convergence rates for each value of $L$ is reported in Table \ref{table:6.1}. As before we notice an order 2 convergence on the gradient but, more importantly, we see that the magnitude of the error remains the same no matter how high the second Lamé coefficient becomes. This demonstrate the fact that the scheme is locking-free in the quasi-incompressible limit, which is consistent with our error estimates (see Remark \ref{rem:error.estimates}). We further investigates the conditioning of the method in the nearly incompressible regime. Keeping Young's modulus $E=1$, we vary the Poisson ratio $\nu$ towards $0.5$ on the third tetrahedral mesh (with size $h=$6.75e-01) and the third hexa-cut mesh (with size $h=$5.58e-01). Figure \ref{figure:conditioning} reports the condition number of the global stiffness matrix, and shows that it seems to grow almost linearly with respect to $1/(0.5-\nu)$.

\begin{table}[ht]
\centering
\begin{subtable}{1\textwidth}
\centering
\scriptsize
\begin{tabular}{|c|c|c|c|c|c|c|}
\hline
\multirow{2}{*}{Mesh Size} & \multicolumn{2}{|c|}{$L=1$} & \multicolumn{2}{|c|}{$L=10^4$}& \multicolumn{2}{|c|}{$L=10^6$} \\ \cline{2-7}
& E & Order & E& Order& E& Order \\ \hline
1.73e+00 & 1.73e-01 & -- & 1.94e-01 & -- & 1.94e-01 & -- \\ \hline
8.66e-01 & 4.55e-02 & 1.93 & 5.06e-02 & 1.94 & 5.06e-02 & 1.94 \\ \hline
4.33e-01 & 1.07e-02 & 2.08 & 1.21e-02  & 2.06 & 1.21e-02 & 2.06 \\ \hline
2.16e-01 & 2.63e-03 & 2.04 & 2.99e-03 & 2.02 & 2.99e-03 & 2.02 \\ \hline
1.08e-01 & 6.53e-04 & 2.01 & 7.44e-04 & 2.01 & 7.44e-04 & 2.01 \\ \hline
\end{tabular}
\caption{Cartesian Mesh}
\end{subtable}

\vspace{1em}

\begin{subtable}{\textwidth}
\centering
\scriptsize
\begin{tabular}{|c|c|c|c|c|c|c|}
\hline
\multirow{2}{*}{Mesh Size} & \multicolumn{2}{|c|}{$L=1$} & \multicolumn{2}{|c|}{$L=10^4$}& \multicolumn{2}{|c|}{$L=10^6$} \\ \cline{2-7}
                             & E & Order & E& Order& E& Order \\ \hline
1.275e+00 & 6.42e-02 & -- & 8.39e-02 & -- & 8.40e-02 & -- \\ \hline
1.05e+00 & 2.89e-02 & 4.14 & 3.45e-02 & 4.61 & 3.45e-02 & 4.61 \\ \hline
6.75e-01 & 1.61e-02 & 1.31 & 2.00e-02  & 1.23 & 2.00e-02 & 1.23 \\ \hline
4.12e-01 & 5.48e-03 & 2.20 & 6.36e-03 & 2.33 & 6.36e-03  & 2.33 \\ \hline
2.95e-01  & 2.97e-03 & 1.84 & 3.29e-03 & 1.98 & 3.29e-03 & 1.98 \\ \hline
\end{tabular}
\caption{Tetrahedral Mesh}
\end{subtable}

\vspace{1em}

\begin{subtable}{\textwidth}
\centering
\scriptsize
\begin{tabular}{|c|c|c|c|c|c|c|}
\hline
\multirow{2}{*}{Mesh Size} & \multicolumn{2}{|c|}{$L=1$} & \multicolumn{2}{|c|}{$L=10^4$}& \multicolumn{2}{|c|}{$L=10^6$} \\ \cline{2-7}
                             & E & Order & E& Order& E& Order \\ \hline
1.82e+00 & 2.08e-01 & -- & 2.27e-01 & -- & 2.27e-01 & -- \\ \hline
1.01e+00 & 6.48e-02 & 1.99 & 7.16e-02 & 1.97 & 7.16e-02 & 1.97 \\ \hline
5.58e-01 & 1.65e-02 & 2.30 & 1.86e-02  & 2.26 & 1.86e-02 & 2.26 \\ \hline
2.86e-01 & 4.34e-03 & 2.01 & 4.86e-03 & 2.02 & 4.86e-03 & 2.02 \\ \hline
1.45e-01 & 1.14e-03 & 1.98 & 1.24e-03 & 2.02 & 1.24e-03 & 2.02 \\ \hline
\end{tabular}
\caption{Hexa-cut Mesh}
\end{subtable}
\caption{Order of convergence of $\text{E}=\lVert \nabla \bo{u}-\nabla\up{h}(\uvec{\bo{u}}_h) \rVert_{L^2(\cT_h)}$ with different values of $L$ for different mesh types. Test case of Section \ref{subsec.6.3}.}\label{table:6.1}
\end{table}


\begin{figure}[ht]
\centering
  \begin{tikzpicture}
  \begin{axis}[
    width=0.45\textwidth,
    height=0.48\textwidth,
    xlabel={$\nu$},
    ylabel={Condition Number},
    xlabel={$0.5 - \nu$},
    xmode=log,
    ymode=log,
    x dir=reverse,
    xmin=1e-5, xmax=1e-1,
    every axis plot/.append style={thick, mark size=2pt},
    ymin=400, ymax=1e+13,
    legend style={
    at={(0.05,0.05)},
    anchor=south west,
    font=\small,
    draw=none, fill=none
  }
  ]
\addplot[red, mark=*] coordinates {(0.1,1.24222e+06) (0.01,1.26918e+07) (0.0001,7.36797e+10) (0.00001,1.21285e+12)};
\addlegendentry{Tetrahedral}
\addplot[blue, mark=square*] coordinates {(0.1,84451) (0.01,7.87693e+05) (0.0001,4.98356e+09) (0.00001,1.80707e+11)};
\addlegendentry{Hexa-cut}
  \end{axis}
  \end{tikzpicture}
\caption{Condition number vs $\nu$. Test case of Section \ref{subsec.6.3}.}\label{figure:conditioning}
\end{figure}

\subsection{3D Discrete Fracture Matrix~(DFM) model with intersecting fractures}\label{subsec.6.4}

We consider the three-dimensional domain $\Omega=(0,1)^3$ that contains the fracture network depicted in Figure \ref{fig:mesh_6k}. The meshes are produced using \texttt{GMesh}, which provides tetrahedral meshes compliant with the fracture network \cite{gmsh}. The material is assumed to be isotropic and homogeneous with Young's modulus $E=4\times 10^9$ and Poisson's ratio $\nu=0.2$, yielding a shear modulus $L=\frac{5}{3}\times 10^9$. The Tresca model governs the friction model with a constant friction coefficient $g=4\times 10^6$. Dirichlet boundary conditions are prescribed on the top and bottom surfaces: $\bo{u}=\bo{0}$ at $z=0$ and $\bo{u}=[0.0015,-0.0015,-0.002]$ at $z=1$. Homogeneous Neumann conditions are imposed on the lateral boundaries. In the numerical experiments, the normal and tangential penalty parameters are set to $\beta_{h,\bo{n}}=10^6=\beta_{h,\tau}.$ 
In practice, the nonlinear convergence is only weakly sensitive to the choice of the parameters $\beta_{h,\bo{n}}$ and $\beta_{h,\tau}$, which are both taken as a small fraction, typically $10^{-3}$, of the first Lam\'e coefficient ${E \over {1 + \nu}}$. A high degree of robustness of the nonlinear convergence is observed for values ranging from about $10^{-3}{E \over {1 + \nu}}$ to ${E \over {1 + \nu}}$. The natural scaling with respect to the mesh would be $1/h_\sigma$, where $h_\sigma$ denotes the diameter of the face $\sigma$, but we have noted this scaling does not appear to be necessary in the simulations. For a numerical investigation of the sensitivity of the nonlinear convergence to the choice of these parameters, we refer to \cite{BDMP:21}.

In this study, we focus on the qualitative behavior of the solution. Figure \ref{fig:contact_normal}(a) displays the computed contact state, while Figure \ref{fig:contact_normal}(b) displays the normal component of the displacement jump across the fracture network. The superior performance of the higher-order scheme is demonstrated in Figure \ref{fig:normal_fracture2-4} by the mean of the normal component of the discrete Lagrange multiplier $\lambda_{h, \bo{n}}$ on fractures F2--F4~(see Figure \ref{fig:contact_normal}(b)). As we refine the mesh, we expect the approximate mean to converge to the exact mean; both schemes identify, at convergence, the same value for F2, F3, and F4, but the high-order scheme already gives this value on the coarsest meshes, and with fewer degrees of freedom than the low-order scheme. A comparison of the Newton iteration counts required to reach a tolerance of $10^{-12}$
 for the lowest-order method \cite{jhr}, and the present scheme is reported in Table~\ref{table:unknown_solution}. Furthermore, Figure~\ref{fig:traction_normal} demonstrates that the discrete normal traction produced by the current scheme is considerably smoother than that obtained with the lowest-order approach \cite{jhr}.
This establishes the higher-order scheme as both more accurate and computationally efficient. This advantage extends to fracture F1, where the higher-order method yields a more accurate mean normal displacement jump (Figure \ref{fig:normal_fracture1}(a)) and a superior resolution of the contact transition zone (Figure \ref{fig:contact_normal}(a)), as evidenced by the more precise mean $\lambda_{h,\bo{n}}$ profile in Figure \ref{fig:normal_fracture1}(b).

\begin{table}
\centering
\scriptsize
\begin{tabular}{|c|c|c|c|c|c|c|}
\hline
 & Mesh &1 &2 &3 &4 &5 \\ \hline
 \multirow{2}{*}{Lowest-order method \cite{jhr}} & Nb Newton iterations &12 & 12&12 & 12 &13 \\ \cline{2-7} 
 & Degrees of Freedom&4092 & 20592& 52665&85566 &127047\\ \hline
  \multirow{2}{*}{Present method} &Nb Newton iterations &5 &7 & 7 & 7 & 7 \\ \cline{2-7} 
 &Degrees of Freedom & 32463&205515 & 609507&1087011 &1703745\\ \hline
\end{tabular}
\caption{Test case from Section \ref{subsec.6.4}: Comparison between the lowest-order method \cite{jhr} and the current higher-order scheme on the number of Newton iterations required to converge within the desired tolerance of $10^{-12}$.}\label{table:unknown_solution}
\end{table}
\begin{figure}[ht]
\centering
\includegraphics[width=0.5\textwidth]{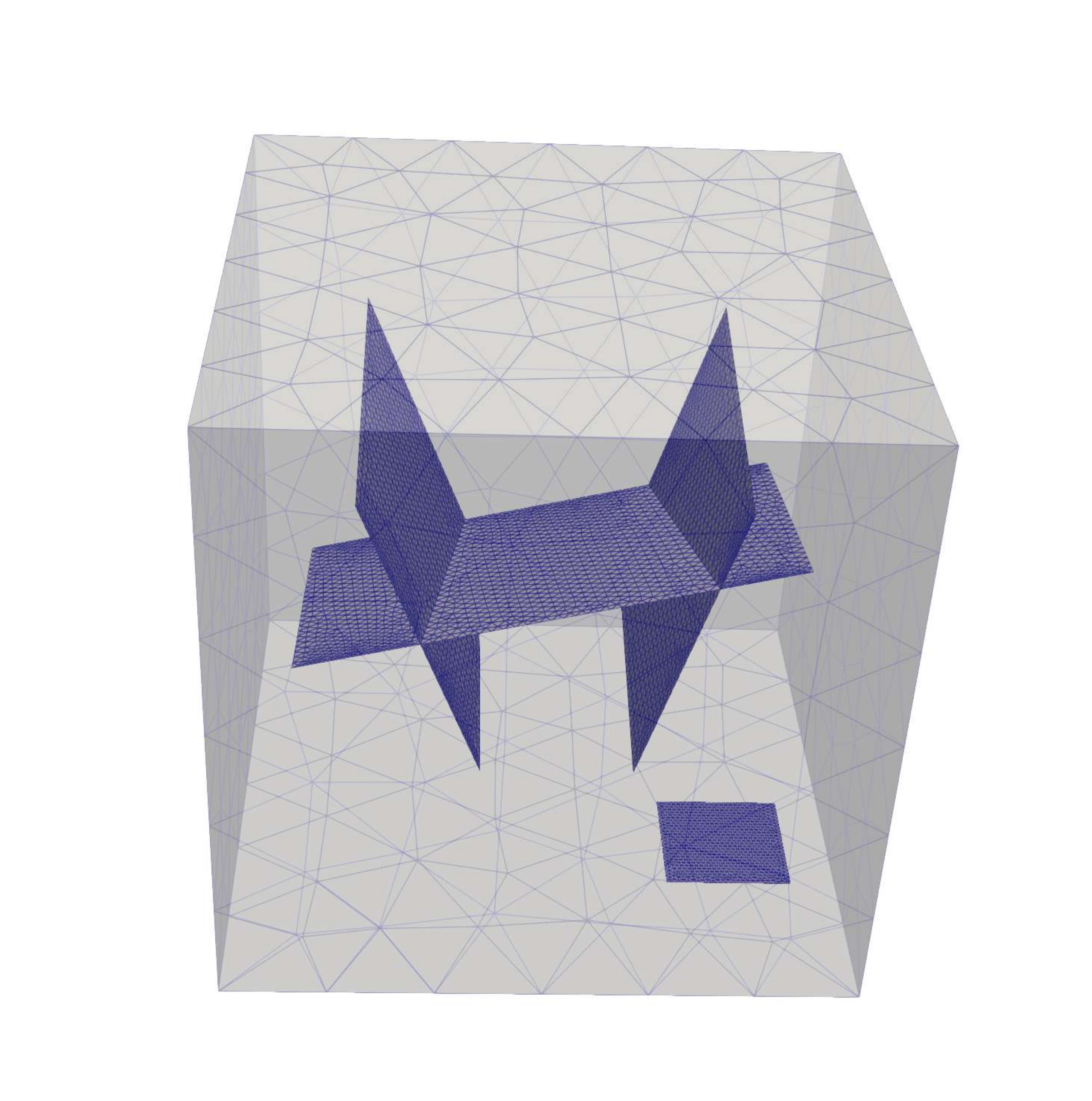}
\caption{Fracture network for the test case in Section~\ref{subsec.6.4}}
\label{fig:mesh_6k}  
\end{figure}
\begin{figure}[ht]
\begin{subfigure}{0.45\textwidth}
  \includegraphics[width=\textwidth]{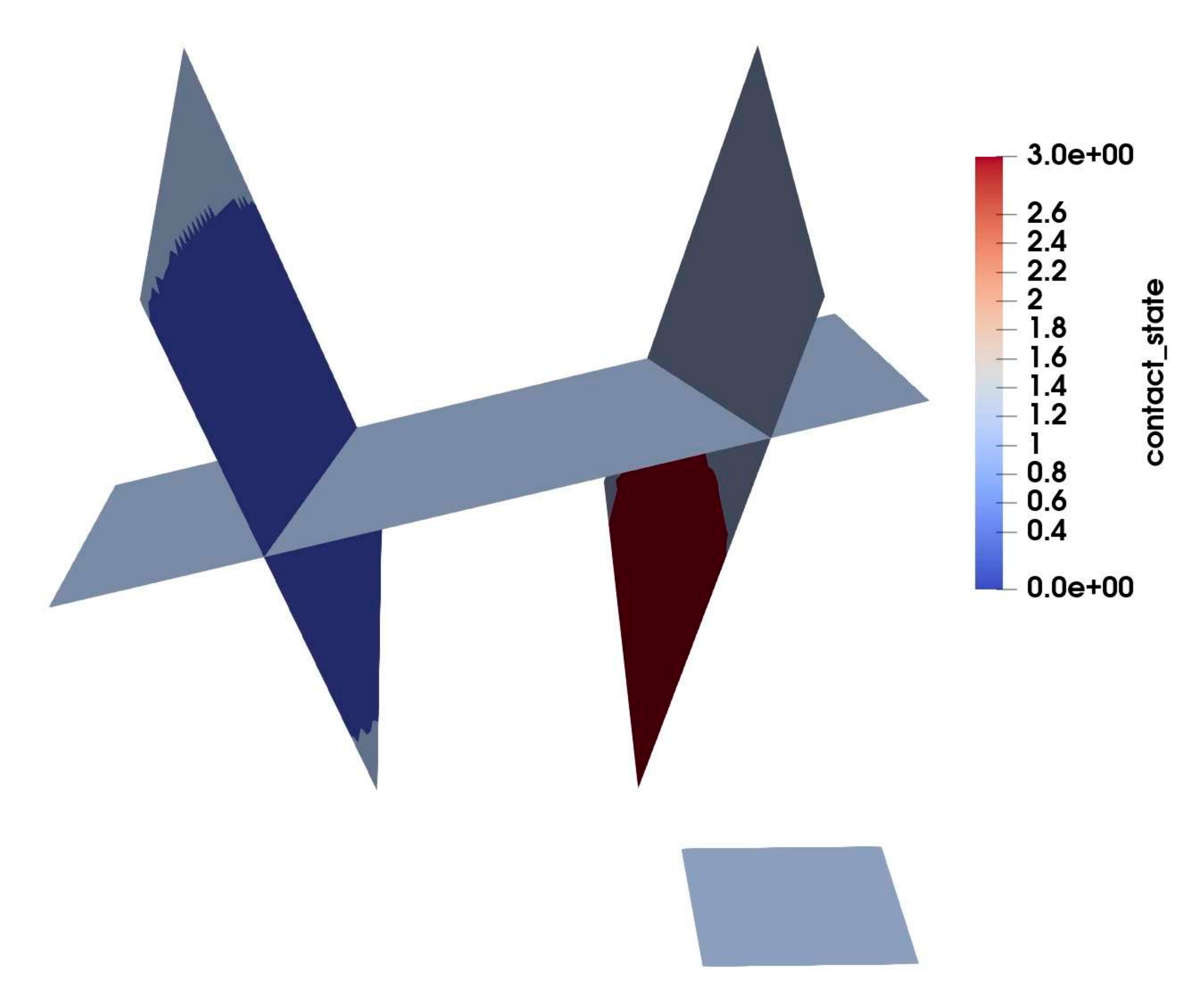}
  \caption{}
\end{subfigure}
\begin{subfigure}{0.45\textwidth}
  \includegraphics[width=\textwidth]{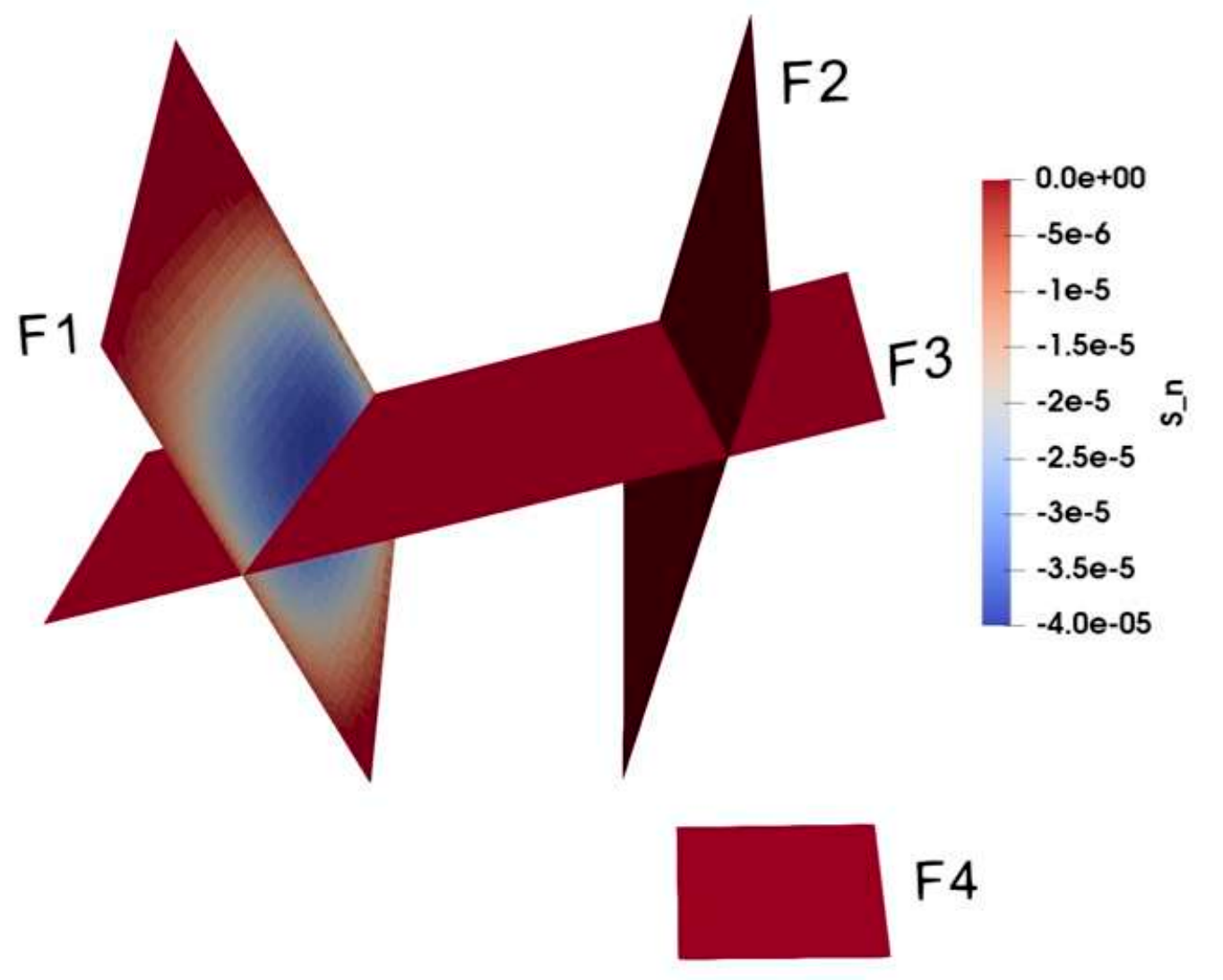}
  \caption{}
\end{subfigure}
\caption{Results for the test case in Section~\ref{subsec.6.4}: (a) contact state classification, where the values indicate: 0 if $\left|\blambda_{h,\tau}\right| < g$ and $\sjump{\uvec{u}_h}_{h,\bo{n}} <0$; 1 if $\left|\blambda_{h,\tau}\right| < g$ and $\sjump{\uvec{u}_h}_{h,\bo{n}} =0$; 2 if $\left|\blambda_{h,\tau}\right| = g$ and $\sjump{\uvec{u}_h}_{h,\bo{n}} <0$; and 3 if $\left|\blambda_{h,\tau}\right| =g$ and $\sjump{\uvec{u}_h}_{h,\bo{n}} =0$; and (b) normal displacement jump obtained using the DDR discretisation with 123k cells and 8.8k fracture faces.}
\label{fig:contact_normal} 
\end{figure}

\begin{figure}[ht]
\begin{subfigure}{0.35\textwidth}
  \includegraphics[width=\textwidth]{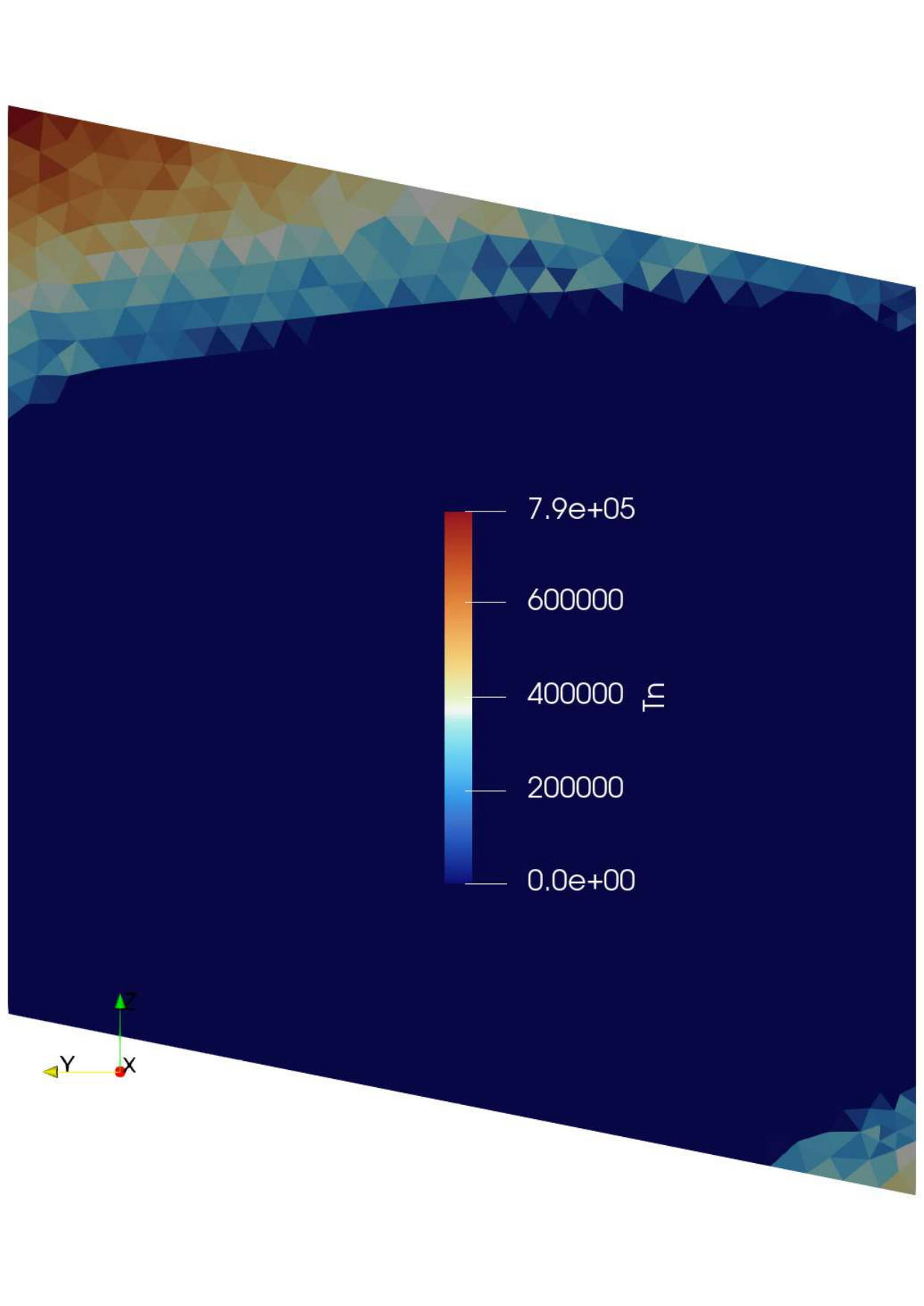}
  \caption{}
\end{subfigure}
\begin{subfigure}{0.35\textwidth}
  \includegraphics[width=\textwidth]{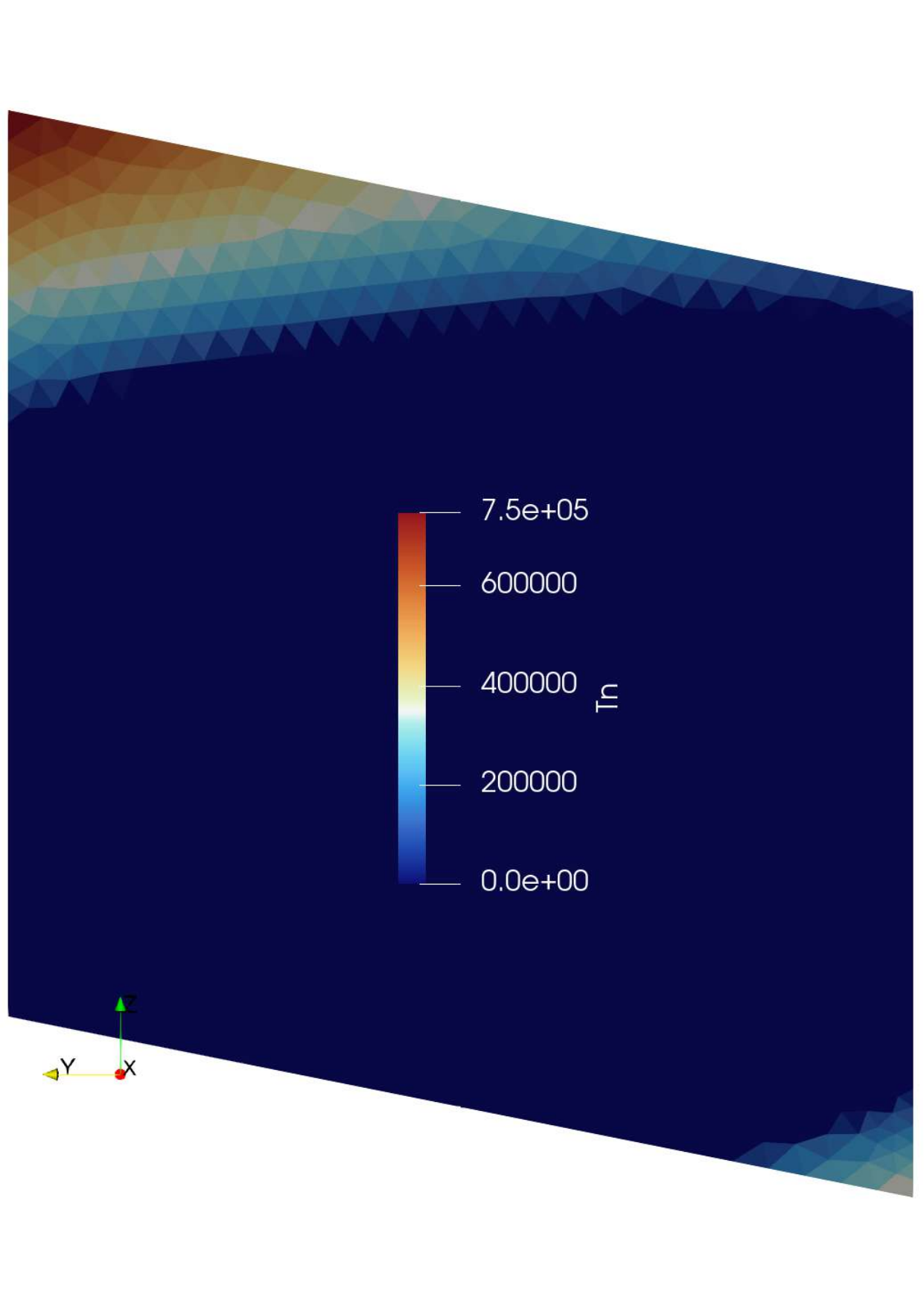}
  \caption{}
\end{subfigure}
\caption{Test case from Section \ref{subsec.6.4}: Plot of normal component of discrete Lagrange multiplier:  (a) lowest-order method \cite{jhr} and (b) the current higher-order scheme for fracture F1 on a tetrahedral mesh.}
\label{fig:traction_normal} 
\end{figure}

\begin{figure}[!ht]
\centering
\begin{tikzpicture}

\begin{groupplot}[
  group style={
    group size=2 by 1,
    horizontal sep=2.4cm,
    vertical sep=1.8cm,
  },
  width=0.45\textwidth,
  height=0.42\textwidth,
  xlabel={Degrees of Freedom},
  xmode=log,
  legend style={
    at={(0.05,0.05)},
    anchor=south west,
    font=\small,
    draw=none, fill=none
  },
  every axis plot/.append style={thick, mark size=2pt}
]

\nextgroupplot[
    xmin=4092, xmax=1015547,
    ymin=-3e-5, ymax=-1e-7,
    title={(a)}
]
\addplot[red, mark=*] coordinates { (4092,-9.8685746435799798E-006) (8031,-9.7280384833157034E-006) (13203,-9.7863271735873665E-006) (20592, -1.1954258652024626E-005) (29367,-1.2365061896298013E-005) (39483,-1.3111056573384876E-005) (52665,-1.3495769209905402E-005) (67329,-1.3588452566626169E-005) (85566,-1.4732352025060066E-005) (104328,-1.4906622970850813E-005) (127047,-1.5017332417664489E-005) (193113, -1.5791665243168232E-005) (261102, -1.5882868954957554E-005) (305838,-1.5949244517750590E-005) (357306, -1.5965315260543715E-005)};
\addlegendentry{$\sjump{\uvec{u}_h}_{h,\bo{n}}\text{(lowest order)}$}
\addplot[blue, mark=square*] coordinates { (32463,-1.754821483105e-05) (69777,-1.714964055313e-05) (122997,-1.709109330011e-05) (205515, -1.714390838323e-05) (306981,-1.714674251832e-05) (430479,-1.716270541900e-05) (609507,-1.715401390793e-05) (815547,-1.718849592655e-05)};
\addlegendentry{$\sjump{\uvec{u}_h}_{h,\bo{n}}$}

\nextgroupplot[
    xmin=4092, xmax=1015547,
    ymin=1e4, ymax=3e5,
    ymode=log,
    title={(b)}
]
\addplot[red, mark=*] coordinates { (4092,51270.052776872348) (8031,76425.545231454817) (13203,83089.382355386770) (20592, 56144.125942610292) (29367,65509.388304796135) (39483,58638.680016498540) (52665,60706.289757865430) (67329,55164.320884697256) (85566,52117.249809083951) (104328,52422.415322112429) (127047,50741.098554447104) (193113, 45816.304585659542) (261102, 45204.491707195106 ) (305838,44781.641980256863) (357306, 44614.305725036422 )};
\addlegendentry{$\lambda_{h,\bo{n}}\text{ (lowest order)}$}
\addplot[blue, mark=square*] coordinates {(32463,2.732659988102e+04) (69777,3.360658140641e+04) (122997,4.239898011932e+04) (205515, 4.287896011842e+04) (306981,4.331842177616e+04) (430479,4.354265923547e+04) (609507,4.372961658076e+04) (815547,4.363108041182e+04)};
\addlegendentry{$\lambda_{h,\bo{n}}$}
\end{groupplot}
\end{tikzpicture}
\caption{Test case from Section \ref{subsec.6.4}: Comparison between the lowest-order method \cite{jhr} and the current higher-order scheme for fracture F1 on a tetrahedral mesh: (a) Mean value of normal displacement jump and (b) Mean value of normal component of the discrete Lagrange multiplier.}
\label{fig:normal_fracture1}
\end{figure}
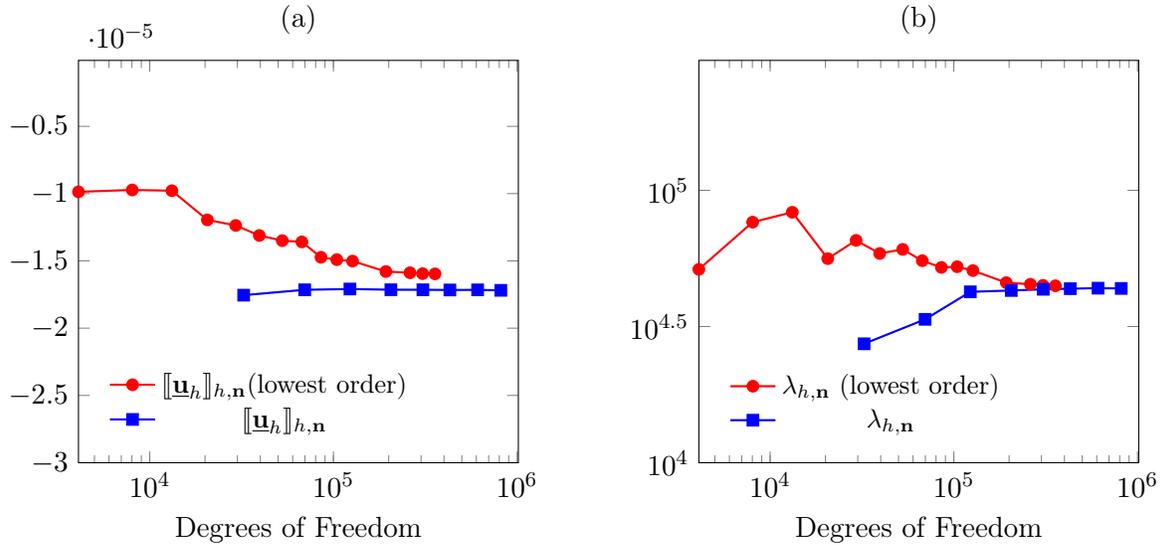

\begin{figure}[ht]
\centering
\begin{tikzpicture}

\begin{groupplot}[
  group style={
    group size=2 by 1,
    horizontal sep=2.4cm,
    vertical sep=1.8cm,
  },
  width=0.45\textwidth,
  height=0.42\textwidth,
  xlabel={Degrees of Freedom},
  ymode=log, xmode=log,
  legend style={
    at={(0.05,0.05)},
    anchor=south west,
    font=\small,
    draw=none, fill=none
  },
  every axis plot/.append style={thick, mark size=2pt}
]

\nextgroupplot[
     xmin=4092, xmax=1015547,
    ymin=0.9e6, ymax=3e6,
    title={(a)}
]
\addplot[red, mark=*] coordinates { (4092,2171096.5779527184) (8031,2034099.4534486097 ) (13203,1970424.3211318683) (20592, 1978156.7430872868) (29367,1936641.9566928442) (39483,1931600.9880419080) (52665,1928107.1550206488) (67329,1927117.9137429339) (85566,1917114.3138301524) (104328,1914940.9023684901) (127047,1914163.4588728605) (193113, 1892210.9880421869) (261102, 1888521.4076513657) (305838,1886806.5991800867) (357306, 1885950.9071255969)};
\addlegendentry{$\lambda_{h,\bo{n}}\text{ (lowest order)}$}
\addplot[blue, mark=square*] coordinates {(32463,1.860224591625e+06) (69777,1.872268898320e+06) (122997,1.878143312935e+06) (205515, 1.878383170878e+06) (306981,1.878893150699e+06) (430479,1.879398898075e+06) (609507,1.879584119375e+06) (815547,1.879480345469e+06)};
\addlegendentry{$\lambda_{h,\bo{n}}$}

\nextgroupplot[
     xmin=4092, xmax=1015547,
    ymin=8.5e6, ymax=1e7,
    title={(b)}
]
\addplot[red, mark=*] coordinates { (4092,9389516.9930326529) (8031,9252165.9330312461) (13203,9191500.8203133401) (20592, 9154377.5559180975) (29367,9134343.1049237568 ) (39483,9113588.3015194181) (52665,9103196.0795311760) (67329,9093077.8503232263 ) (85566,9084778.8346578833) (104328,9073950.9259710368) (127047,9068451.7660144512) (193113, 9036031.4670693167) (261102, 9027948.2669025287) (305838,9026454.8340098467) (357306, 9024399.1835238803)};
\addlegendentry{$\lambda_{h,\bo{n}}\text{ (lowest order)}$}
\addplot[blue, mark=square*] coordinates {(32463,9.014873810310e+06) (69777,9.015958304712e+06) (122997,9.018153770937e+06) (205515, 9.016521787060e+06) (306981,9.016155788724e+06) (430479,9.015658425619e+06) (609507,9.015588977087e+06) (815547,9.015177327698e+06)};
\addlegendentry{$\lambda_{h,\bo{n}}$}
\end{groupplot}

\node at ($(group c1r1.south)!0.4!(group c2r1.south) - (0,6.0cm)$) {
  \begin{tikzpicture}
  \begin{axis}[
    width=0.45\textwidth,
    height=0.42\textwidth,
    xlabel={Degrees of Freedom},
    ymode=log, xmode=log,
    every axis plot/.append style={thick, mark size=2pt},
     xmin=4092, xmax=1015547,
    ymin=1e7, ymax=1.3e7,
    title={(c)},
    legend style={
    at={(0.05,0.05)},
    anchor=south west,
    font=\small,
    draw=none, fill=none
  }
  ]
\addplot[red, mark=*] coordinates { (4092,10574230.497082464) (8031,11039547.250089059 ) (13203,11277804.660379954) (20592, 11347644.738935452) (29367,11387123.383637752) (39483,11437455.692985879 ) (52665,11435854.463999324 ) (67329,11444511.313130355  ) (85566,11471921.485802082) (104328,11474962.722457238) (127047,11488131.678192582) (193113, 11518473.528538164) (261102, 11529103.990502428) (305838,11534050.908261105) (357306, 11536009.347686866 )};
\addlegendentry{$\lambda_{h,\bo{n}}\text{ (lowest order)}$}
\addplot[blue, mark=square*] coordinates {(32463,1.151488543327e+07) (69777,1.148898924947e+07) (122997,1.153300526883e+07) (205515, 1.154100315148e+07) (306981,1.154501609029e+07) (430479,1.154953586803e+07) (609507,1.155069582982e+07) (815547,1.155000971570e+07)};
\addlegendentry{$\lambda_{h,\bo{n}}$}
  \end{axis}
  \end{tikzpicture}
};

\end{tikzpicture}
\caption{Test case from Section \ref{subsec.6.4}: Mean value of normal component of the discrete Lagrange multiplier for fractures F2, F3, and F4 are shown in (a), (b), and (c), respectively. Results compare the lowest-order method \cite{jhr} with the current higher-order scheme on a tetrahedral mesh.}
\label{fig:normal_fracture2-4}
\end{figure}



\FloatBarrier
\section*{Acknowledgments}

We acknowledge the funding of the European Union via the ERC Synergy, NEMESIS, project number 101115663.
Views and opinions expressed are, however, those of the authors only and do not necessarily reflect those of the European Union or the European Research Council Executive Agency.
Neither the European Union nor the granting authority can be held responsible for them.


\bibliographystyle{plain}
\bibliography{refs.bib}
\end{document}